\newcommand{\abar}{{\ensuremath{\bar{a}}}}
\newcommand{\bbar}{{\ensuremath{\bar{b}}}}
\newcommand{\cbar}{{\ensuremath{\bar{c}}}}
\newcommand{\xbar}{{\ensuremath{\bar{x}}}}
\newcommand{\ybar}{{\ensuremath{\bar{y}}}}
\newcommand{\zbar}{{\ensuremath{\bar{z}}}}
\newcommand{\Xbar}{{\ensuremath{\bar{X}}}}
\newcommand{\Ybar}{{\ensuremath{\bar{Y}}}}
\DeclareMathOperator{\rk}{rk}
\DeclareMathOperator{\acl}{acl}   % algebraic closure operator
\DeclareMathOperator{\td}{td}  %transcendence degree
\DeclareMathOperator{\loc}{Loc}   % (algebraic) locus
\newcommand{\Qab}{\Q^{\mathrm{ab}}}  %maximal abelian extension of rationals
\newcommand{\rad}{\ensuremath{\mathrm{rad}}} %as index for radical closure
\DeclareMathOperator{\ldim}{ldim}  %linear dimension
\DeclareMathOperator{\Mat}{Mat}  % Matrices
\DeclareMathOperator{\etd}{etd}  %exponential transcendence degree
\DeclareMathOperator{\ecl}{ecl} % exponential algebraic closure
\newcommand{\N}{\ensuremath{\mathbb{N}}}
\newcommand{\Z}{\ensuremath{\mathbb{Z}}}
\newcommand{\Q}{\ensuremath{\mathbb{Q}}}
\newcommand{\C}{\ensuremath{\mathbb{C}}}
\newcommand{\Rexp}{\ensuremath{\mathbb{R}_{\mathrm{exp}}}}
\newcommand{\Cexp}{\ensuremath{\mathbb{C}_{\mathrm{exp}}}}
\newcommand{\Cat}{\ensuremath{\mathcal{C}}} % category C
\newcommand{\Loo}{\ensuremath{L_{\omega_1,\omega}}}
\newcommand{\ga}{\ensuremath{\mathbb{G}_\mathrm{a}}}   %additive group of a field
\newcommand{\gm}{\ensuremath{\mathbb{G}_\mathrm{m}}}  %mult group of a field
\renewcommand{\phi}{\varphi}
\renewcommand{\le}{\ensuremath{\leqslant}}
\renewcommand{\ge}{\ensuremath{\geqslant}}
\newcommand{\tuple}[1]{\ensuremath{\langle #1 \rangle}}
\newcommand{\class}[2]{\ensuremath{\left\{ #1 \,\left|\, #2 \right.\right\}}}
\newcommand{\iso}{\cong}
\newcommand{\into}{\hookrightarrow}
\newcommand{\subs}{\subseteq} % diagrams package uses subset for hook
\newcommand{\minus}{\ensuremath{\smallsetminus}}
\newcommand{\strong}{\ensuremath{\lhd}} % strong embedding - better
\newcommand{\nstrong}{\ensuremath{\not\kern-4pt\lhd\;}} % nonstrong embedding
\newcommand{\gen}[1]{\ensuremath{\left\langle #1 \right\rangle}} 
\newcommand{\hull}[1]{\ensuremath{\lceil #1\rceil}}
\newcommand{\cross}{\ensuremath{\times}}
\newbox\noforkbox \newdimen\forklinewidth
\noforkbox\hbox{\lower 2pt\box1\lower
2pt\box0\relax}
\def\unionstick{\mathop{\copy\noforkbox}\limits}
\def\nonfork_#1{\unionstick_{\textstyle #1}}
\newbox\doesforkbox
\doesforkbox\hbox{\lower 2pt\box1 \lower
2pt\box2\lower2pt\box0\relax}
\def\nunionstick{\mathop{\copy\doesforkbox}\limits}
\def\fork_#1{\nunionstick_{\textstyle #1}}
\newcommand{\ra}[3]{\ensuremath{#1 \stackrel{#2}{\longrightarrow} #3}}
\newcommand{\leteq}{\mathrel{\mathop:}=}
\newtheorem{prop}{Proposition}[section]
\newtheorem{cor}[prop]{Corollary}
\newtheorem{theorem}[prop]{Theorem}
\newtheorem{lemma}[prop]{Lemma}
\newtheorem{fact}[prop]{Fact}
\newtheorem*{claim}{Claim}
\theoremstyle{definition}
\newtheorem{defn}[prop]{Definition}
\newtheorem{example}[prop]{Example}
\newtheorem{remark}[prop]{Remark}
\newtheorem{construction}[prop]{Construction}
\renewcommand{\th}{{\ensuremath{{}^{\mathrm{th}}}}}  %original meaning is thorn symbol
\newcommand{\egg}{exponential-graph-generated}
\newcommand{\Kummergen}{Kummer-generic}
\newcommand{\seac}{strongly exponentially-algebraically closed}
\newcommand{\EAC}[1]{\ensuremath{{#1}^\sim}}   %Strong exponential-algebraic closure
\newcommand{\B}{\ensuremath{\mathbb{B}}}
\title{Finitely Presented Exponential Fields}
\author{Jonathan Kirby}
\date{Version 4.0, \today}
\address{Jonathan Kirby \\
School of Mathematics\\
University of East Anglia\\
Norwich NR4 7TJ\\
UK}
\subjclass[2000]{03C65, 11J81}
\keywords{Exponential fields, Schanuel's conjecture, pseudo-exponentiation, transcendence}
\begin{document}

\begin{abstract}
  The algebra of exponential fields and their extensions is developed. The focus is on ELA-fields, which are algebraically closed with a surjective exponential map. In this context, finitely presented extensions are defined, it is shown that finitely generated strong extensions are finitely presented, and these extensions are classified. An algebraic construction is given of Zilber's pseudo-exponential fields. As applications of the general results and methods of the paper, it is shown that Zilber's fields are not model-complete, answering a question of Macintyre, and a precise statement is given explaining how Schanuel's conjecture answers all transcendence questions about exponentials and logarithms. Connections with the Kontsevich-Zagier, Grothendieck, and Andr\'e transcendence conjectures on periods are discussed, and finally some open problems are suggested.
\end{abstract}

\maketitle

\setcounter{tocdepth}{1}
\tableofcontents

\section{Introduction}

  An \emph{exponential field} (or \emph{E-field}) is a field $F$ of characteristic zero equipped
  with a homomorphism $\exp_F$ (also written $\exp$, or $x \mapsto
  e^x$) from the additive group $\ga(F) = \tuple{F;+}$ to the multiplicative group $\gm(F) = \tuple{F^\times;\cdot}$.
The main examples are the real and complex exponential fields, $\Rexp$ and $\Cexp$, where the exponential map is given by the familiar power series. 

Zilber \cite{Zilber05peACF0} gave axioms describing particular exponential fields which he called ``pseudo-exponential fields''. His construction is model-theoretic and focuses mainly on the uncountable setting. In this paper we develop the algebra of exponential fields leading, amongst other things, to an algebraic construction of the pseudo-exponential fields which gives some more information about them. 

Some of the concepts in this paper appear also in Zilber's paper, but here we present them in a wider and more natural context. In particular, we do not assume that our exponential fields satisfy the Schanuel property, so much of what we do applies unconditionally to the complex setting. The main method of the paper is the use of a predimension function $\delta$. These functions were introduced by Hrushovski \cite{Hru93} for various model-theoretic constructions, but it appears they could have a significant use in transcendence theory as well.

We consider mainly those exponential fields which are algebraically closed and have a surjective exponential map, in this paper called ELA-fields. In \S2 we show that an exponential field $F$, or even a field with a partially-defined exponential map, can be extended in a free way to an ELA-field and, under some extra assumptions such as $F$ being finitely generated, this free extension is unique up to isomorphism. 

In \S3 we give a definition of an extension of ELA-fields being \emph{finitely presented}. The finite presentations take the form of algebraic varieties which are the locus of a suitable generating set. Compare the situation of a finitely generated extension of pure fields, where the extension is determined up to isomorphism by the ideal of polynomials satisfied by the generators, or equivalently by their algebraic locus over the base field. The most important extensions of exponential fields are the so-called \emph{strong extensions}. In \cite{EAEF} it was shown that these are the extensions which preserve the notion of exponential algebraicity. We prove that a finitely generated, kernel-preserving, strong extension of ELA-fields is a finitely presented extension. This theorem could be viewed as the analogue for exponential fields of the Hilbert Basis Theorem, which implies that any finitely generated extension of fields is finitely presented.

The brief \S4 explains the convention for defining finitely presented ELA-fields (as opposed to finitely presented extensions). With this convention, it follows at once that, if Schanuel's conjecture is true, every finitely generated ELA-subfield of $\Cexp$ is finitely presented. We can give a similar, unconditional result. By Theorem~1.2 of \cite{EAEF}, we know that $\Cexp$ is a strong extension of its countable subfield $\C_0$ of exponentially-algebraic numbers. It is therefore an immediate consequence of Theorem~\ref{aleph0-stability} that every finitely generated ELA-extension of $\C_0$ within $\Cexp$ is a finitely presented extension.

In \S5 we show that whether or not a finitely presented extension is strong can be detected from the algebraic variety which gives the presentation, and a classification is given of all finitely generated strong ELA-extensions.

The analogue of the algebraic closure of a field is the strong exponential-algebraic closure $\EAC{F}$ of an exponential field $F$.
Zilber's pseudo-exponential fields are the simplest examples of this construction. The main claim of \cite{Zilber05peACF0} was that the uncountable pseudo-exponential fields are determined up to isomorphism by their cardinality. Unfortunately there is a mistake in the proof there. In the proof of Proposition~5.15 in \cite{Zilber05peACF0}, there is no reason why $A'B'$ should not lie in $C$, and then $V'$ would not contain $V_0$. Indeed, that proposition as stated is false, because the definition of \emph{finitary} used there does not give sufficiently strong hypotheses. The stronger hypotheses of Lemma~5.14 would be enough to prove the main result, but no correct proof is known to me at the time of writing, even with these hypotheses.

In \S6 we construct $\EAC{F}$ and, under some basic assumptions including countability, show that it is unique. In particular, we prove that the countable pseudo-exponential fields are determined up to isomorphism by their exponential transcendence degree. In fact the uniqueness of the pseudo-exponential field $B_{\aleph_1}$ of cardinality $\aleph_1$ then follows by Zilber's methods, as explained  for example in \cite[Theorem~2.1]{OQMEC}, but the higher cardinalities are still problematic. %In ongoing work with Bays and Caycedo, we hope to deal with these cases. 

In \S7 we answer a question of Macintyre by showing that Zilber's pseudo-exponential fields are not model-complete, and in \S8 we show that ELA-fields satisfying the Schanuel nullstellensatz are not necessarily \seac, in contrast to the situation for pure fields where the Hilbert nullstellensatz characterises algebraically closed fields.

In \S9 we reflect on what the ideas of this paper show for transcendence problems, and try to give a formal statement expressing the generally accepted principle that Schanuel's Conjecture answers all transcendence problems about exponentials and logarithms. We write $\B$ to mean a pseudo-exponential field of cardinality $2^{\aleph_0}$. Zilber's conjecture is that $\Cexp \iso \B$, which on the face of it makes sense only if $\B$ is well-defined, but in fact the conjecture consists of the two assertions that Schanuel's conjecture is true and that $\Cexp$ is \seac, both of which make sense independently of the uniqueness of $\B$. Connections between Schanuel's conjecture and conjectures on periods are explored.

Finally, in \S10 we suggest some open problems.

I am grateful to many people for discussions relating to this paper, particularly to Michel Waldschmidt and Daniel Bertrand for discussions about the relationship with transcendence problems.

\section{Free extensions}

As an intermediate stage in constructing exponential fields we need the notion of a \emph{partial E-field}.
\begin{defn}\label{partial E-field defn}
  A \emph{partial E-field} $F$ consists of a field $\tuple{F;+,\cdot}$ of characteristic zero, a $\Q$-linear subspace $D(F)$ of the additive group of the field, and a homomorphism $\ra{\tuple{D(F);+}}{\exp_F}{\tuple{F;\cdot}}$.

$D(F)$ is the \emph{domain} of the exponential map of $F$, and we write $I(F) = \exp_F(D(F))$, the \emph{image} of the exponential map.

A homomorphism of partial E-fields is a field embedding $\ra{F}{\theta}{F_1}$ such that $\theta(D(F)) \subs D(F_1)$ and for every $x \in D(F)$, $\exp_{F_1}(\theta(x)) = \theta(\exp_F(x))$.
\end{defn}

If $X$ is a subset of a partial E-field $F$, we define the partial E-subfield of $F$ generated by $X$, written $\gen{X}_F$, to have $D(\gen{X}_F)$ equal to the $\Q$-span of $D(F) \cap X$, and the underlying field of $\gen{X}_F$ to be the subfield of $F$ generated by $D(\gen{X}_F) \cup I(\gen{X}_F) \cup X$. Thus $\gen{X}_F$ contains all the exponentials in $F$ of elements of $X$, but does not contain iterated exponentials. A different but equivalent definition of partial E-fields is given in \cite{EAEF}, where $D(F)$ is given as a separate sort. 

In this paper we consider only those partial E-fields $F$ which are algebraic over $D(F) \cup I(F)$.

Now let $F$ be a partial E-field, $\xbar$ a finite tuple from $D(F)$, and $B$ a subset of $D(F)$. We define the \emph{relative predimension function} to be
\[\delta(\xbar/B) = \td(\xbar,\exp(\xbar)/B,\exp(B)) - \ldim_\Q(\xbar/B)\]
where by $\td(X/Y)$ we mean the transcendence degree of the field extension $\Q(XY)/\Q(Y)$ and by $\ldim_\Q(X/Y)$ we mean the dimension of the $\Q$-vector space spanned by $X \cup Y$, quotiented by the subspace spanned by $Y$.
\begin{defn}
An extension $F \subs F_1$ of partial E-fields is \emph{strong}, written $F \strong
F_1$, iff for every tuple $\xbar$ from $D(F_1)$, we have $\delta(\xbar/D(F)) \ge 0$.
  
If $B$ is a subset of $D(F)$, we define $B \strong F$ iff $\gen{B}_F \strong F$.
\end{defn}

As explained in \cite{EAEF}, strong extensions are essentially those for which the notion of exponential algebraicity is preseved, and are thus the most useful extensions to consider. In this paper we see they are intimately connected with free or finitely presented extensions. 

The following basic properties are easy to verify.
\begin{lemma}[Basic properties of $\delta$ and strong extensions]\label{strong lemma} \ 
   \begin{enumerate}
  \item{} [Addition Property] If $\xbar, \ybar \in D(F)$ are finite tuples and $B \subs D(F)$ then
\[\delta(\xbar\cup \ybar/B) = \delta(\ybar/B) + \delta(\xbar/\ybar \cup B)\]
\item Given a finite tuple $\xbar$ from $D(F)$ and $B \subs D(F)$, there is a finite tuple $\bbar$ from $B$ such that $\delta(\xbar/B) = \delta(\xbar/\bbar)$.
  \item The identity $F \subs F$ is strong.
  \item  If $F_1 \strong F_2$ and $F_2 \strong F_3$ then $F_1 \strong
    F_3$. (That is, the composite of strong extensions is strong.)
  \item An extension $F \subs F_1$ is strong iff for every tuple $\xbar$ from $F_1$, the subextension $F \subs \gen{F,\xbar}_{F_1}$ is strong. 
  \item If $F_1 \strong F_2 \strong \cdots \strong F_n \strong \cdots$ is an $\omega$-chain of strong extensions then $F_1 \strong \bigcup_{n < \omega} F_n$.
\item If in addition each $F_n \strong M$ then $  \bigcup_{n < \omega} F_n \strong M$.  \qed
  \end{enumerate}
\end{lemma}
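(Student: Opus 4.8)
The plan is to prove the seven items in order, reducing the later ones to the earlier ones; the only real work is the transitivity statement~(4). Item~(1), the Addition Property, follows by subtracting two classical facts: since $\exp$ is a homomorphism on $D(F)$ we have $\exp(\xbar\cup\ybar)=\exp(\xbar)\cup\exp(\ybar)$ and $\exp(\ybar\cup B)=\exp(\ybar)\cup\exp(B)$, so the tower law for transcendence degree gives $\td(\xbar\ybar,\exp(\xbar\ybar)/B,\exp(B))=\td(\ybar,\exp(\ybar)/B,\exp(B))+\td(\xbar,\exp(\xbar)/\ybar B,\exp(\ybar B))$, while additivity of $\Q$-linear dimension along a chain gives the analogous splitting of $\ldim_\Q(\xbar\ybar/B)$; subtract. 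Item~(2) is the finite character of $\delta$: a finite transcendence basis of $\{\xbar,\exp(\xbar)\}$ over $\Q(B,\exp(B))$, together with the finitely many polynomial relations witnessing algebraicity of the remaining coordinates, involves only finitely many elements $\bbar_1\subs B$, and finitely many elements $\bbar_2\subs B$ express $\xbar$ linearly modulo $B$; with $\bbar=\bbar_1\cup\bbar_2$, monotonicity of $\td$ and of $\ldim$ in the base gives $\delta(\xbar/\bbar)=\delta(\xbar/B)$, and in fact $\delta(\xbar/\bbar')=\delta(\xbar/B)$ for every finite $\bbar'$ with $\bbar\subs\bbar'\subs B$ --- a stabilisation remark I use repeatedly below. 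Item~(3) is immediate, since for $\xbar$ from the $\Q$-subspace $D(F)$ both terms of $\delta(\xbar/D(F))$ vanish.

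For~(5): if $F\strong F_1$ then every intermediate extension $F\subs F'\subs F_1$ is strong, as a finite tuple from $D(F')$ is one from $D(F_1)$; conversely, given finite $\ybar$ from $D(F_1)$, apply the hypothesis with $\xbar=\ybar$, noting $\ybar\subs D(\gen{F,\ybar}_{F_1})$. Items~(6) and~(7) then follow by limiting arguments. For~(6), a finite tuple from $\bigcup_n D(F_n)$ lies in some $D(F_N)$, so~(6) is~(4) iterated finitely many times. For~(7), the finite witness $\bbar$ for $\delta(\xbar/\bigcup_n D(F_n))$ produced by~(2) lies in some $D(F_N)$; enlarging it inside $D(F_N)$ and invoking the stabilisation remark identifies $\delta(\xbar/\bigcup_n D(F_n))$ with $\delta(\xbar/D(F_N))$, which is $\ge0$ because $F_N\strong M$.

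The substance is~(4). Let $F_1\strong F_2\strong F_3$ and let $\xbar$ be a finite tuple from $D(F_3)$; I must show $\delta(\xbar/D(F_1))\ge0$. By the Addition Property, $\delta(\xbar/D(F_1))$ is $\delta$ of a maximal $\Q$-linearly-independent-over-$D(F_1)$ subtuple of $\xbar$, plus a further term whose linear-dimension part is $0$ and which is therefore $\ge0$; so it suffices to treat the case $\ldim_\Q(\xbar/D(F_1))=|\xbar|$, i.e.\ to prove $\td(\xbar,\exp(\xbar)/D(F_1),\exp(D(F_1)))\ge|\xbar|$. Put $k=|\xbar|-\ldim_\Q(\xbar/D(F_2))$ and choose $\Q$-linearly independent $u_1,\dots,u_k$, each a $\Q$-linear combination of the coordinates of $\xbar$, spanning the intersection of $D(F_2)$ with the $\Q$-span of $\xbar$. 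Then, because $\xbar$ is linearly independent over $D(F_1)$, the tuple $\bar u$ is too, so $\ldim_\Q(\bar u/D(F_1))=k$; and $\bar u\subs D(F_2)$, so $F_1\strong F_2$ gives $\td(\bar u,\exp(\bar u)/D(F_1),\exp(D(F_1)))\ge k$. Since each $u_i$ is a $\Q$-combination of $\xbar$, the tuple $\bar u,\exp(\bar u)$ is algebraic over $\Q(\xbar,\exp(\xbar))$, and since $\bar u\subs D(F_2)$ and $\exp(\bar u)\subs I(F_2)$ it lies in $\Q(D(F_2),\exp(D(F_2)))$. The first fact and the tower law give $\td(\xbar,\exp(\xbar)/D(F_1),\exp(D(F_1)))=\td(\bar u,\exp(\bar u)/D(F_1),\exp(D(F_1)))+\td(\xbar,\exp(\xbar)/\bar u,\exp(\bar u),D(F_1),\exp(D(F_1)))$; the first summand is $\ge k$, and the second is $\ge\td(\xbar,\exp(\xbar)/D(F_2),\exp(D(F_2)))$ because its base is contained in $\Q(D(F_2),\exp(D(F_2)))$, and this in turn is $\ge\ldim_\Q(\xbar/D(F_2))=|\xbar|-k$ by $F_2\strong F_3$. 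Adding, $\td(\xbar,\exp(\xbar)/D(F_1),\exp(D(F_1)))\ge|\xbar|$, as required.

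I expect~(4) to be the main obstacle. The recurring subtlety is that $\delta(-/D(F_i))$ is computed over the possibly infinite set $D(F_i)$ and that $\delta$ is \emph{not} monotone in its base, so transitivity does not fall out of the Addition Property alone; the argument above circumvents this by reducing to the linearly-independent case, by packaging the extra linear dependence of $\xbar$ on $D(F_2)$ into the explicit tuple $\bar u$ to which $F_1\strong F_2$ applies directly, and by adding transcendence degrees over the two different bases via the tower law together with the stabilisation consequence of~(2).
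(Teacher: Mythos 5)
Your proof is correct: all seven items check out, and your argument for transitivity (reducing to the case where $\xbar$ is $\Q$-linearly independent over $D(F_1)$, introducing the tuple $\bar u$ spanning the intersection of $D(F_2)$ with the $\Q$-span of $\xbar$, and splitting the transcendence degree by the tower law) is the standard one for predimension functions of this kind. The paper itself gives no proof — it states the lemma as "easy to verify" — so there is no alternative route to compare against; your write-up simply supplies the routine verifications, with the stabilisation consequence of item (2) correctly used where $\delta$'s failure of base-monotonicity would otherwise be an issue in item (7).
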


We now explain how exponential maps can be constructed abstractly. Let $F$ be a field of characteristic zero, and $D(F)$ a $\Q$-subspace. We will construct an exponential map defined on $D(F)$.
\begin{construction}\label{E-field construction}
  Choose a \Q-basis $\class{b_i}{i \in I}$ of $D(F)$. For each $i \in I$
  we will choose $c_{i,1} \in F$, and we will define $\exp(b_i) =
  c_{i,1}$.  The value of $\exp(b_i/m)$ must be an $m\th$
  root of $c_{i,1}$, so we have to specify which. Furthermore, as $m$
  varies, we must choose these roots coherently. So in fact for each
  $i \in I$ and $m \in \N$ we much choose $c_{i,m} \in F$ such that
  for any $r,m \in \N$, we have $c_{i,rm}^r = c_{i,m}$. Every element
  of $D(F)$ can be written as a finite sum $\frac{1}{m}\sum r_ib_i$
  for some $m \in \N$ and $r_i \in \Z$, and we define
  $\exp(\frac{1}{m}\sum r_ib_i) = \prod c_{i,m}^{r_i}$. The coherence
  condition shows that $\exp$ is well-defined.
\end{construction}
 This coherence property for the roots is important enough that we introduce some terminology for it.
\begin{defn}
  Given $c_1$, a \emph{coherent system of roots of $c_1$} is a sequence $(c_m)_{m \in \N}$ such that for every $r,m \in \N$ we have $c_{rm}^r = c_m$.
\end{defn}

 Of course, for the exponential map to be non-trivial we need to have some elements other than 1 (and 0) which have $n^{\mathrm{th}}$ roots for all $n$. In this case $F$ will have to be infinite dimensional as a $\Q$-vector space, so there will be a vast number (indeed $2^{|F|}$) of different total exponential maps which can be defined on $F$. Thus, for example there is no hope of classifying or understanding even all the exponential maps on $\Q^{\mathrm{alg}}$.

We will now explain how to construct exponential fields in as \emph{free} a way as possible.
\begin{construction}\label{F^e construction}
  Let $F$ be any partial E-field. We construct an extension $F^e$ of
  $F$ such that $D(F^e) = F$. First, embed $F$ in a large algebraically
  closed field, $\mathcal{C}$. Let $\class{b_i}{i \in I}$ be a
  $\Q$-linear basis for $F/D(F)$. Choose $\class{c_{i,n}}{i \in I, n
    \in \N} \subs \mathcal{C}$ such that the $c_{i,1}$ are
  algebraically independent over $F$, and for each $i$,
  $(c_{i,n})_{n \in \N}$ is a coherent system of roots of $c_{i,1}$. Each $r \in F$ is a finite sum of the form
  $r_0 + \frac{1}{n}\sum m_i b_i$ for some $r_0 \in D(F)$, $n \in \N$,
  and some $m_i \in \Z$, and we define $\exp(r_0 + \frac{1}{n}\sum m_i
  b_i = \exp_F(r_0)\prod c_{i,n}^{m_i}$.  Then let $F^e$ be the subfield
  of $\mathcal{C}$ generated by $F \cup \class{c_{i,n}}{i \in I, n \in
    \N}$.

  A straightforward calculation shows that the isomorphism type of the
  extension $F^e$ of $F$ does not depend on the choice of
  $\mathcal{C}$, the choice of the $b_i$, or the choice of the
  $c_{i,n}$.
\end{construction}
The exponential map on $F^e$ will be a total map only when $F$ is
already a total E-field (and so $F^e = F$). However, we can iterate
the construction to get a total E-field.

\begin{construction}
  We write $F^E$ for the union of the chain
  \[F \into F^e \into F^{ee} \into F^{eee} \into \cdots\] and call it the
  \emph{free (total) E-field extension of $F$}.
\end{construction}

We can also produce E-rings, and algebraically closed E-fields by slight variations on this method. It is convenient (albeit rather ugly) to introduce some terminology for the latter.
\begin{defn}
  An \emph{EA-field} is an E-field whose underlying field is algebraically closed.
\end{defn}
\begin{construction}
  For any partial E-field $F$, let $F^a$ be the algebraic closure
  of $F$, with $D(F^a) = D(F)$.

  We write $F^{EA}$ for the union of the chain
  \[F \into F^e \into F^{ea} \into F^{eae} \into F^{eaea} \into
  \cdots\] and call it the \emph{free EA-field extension} of $F$.
\end{construction}

These constructions can intuitively be seen to be free in that at each stage there are no
unnecessary algebraic or exponentially algebraic relations introduced. In the case of exponential rings (rather than fields), the analogous construction of the free E-ring extension can be seen to have the right category-theoretic universal property of a free object. In \cite{Macintyre91}, a universal property of the free E-field is given in terms of E-ring specializations. The extension $F^{EA}$ has non-trivial automorphisms over $F$, so cannot have a category-theoretic universal property, but later we prove uniqueness statements about these extensions making the intuitive notion of freeness precise.

\subsection*{Logarithms}

A logarithm of an element $b$ of an exponential field $F$ is just some $a$ such that $\exp(a) = b$. Of course such a logarithm will only exist if $b$ is in the image of the exponential map, and will be defined only up to a coset of the kernel. In this algebraic setting there is no topology to make sense of a branch of the logarithm function, as in the complex case. We want to consider exponential fields, like \Cexp, in which every nonzero element has a logarithm, so we extend our terminology conventions.

\begin{defn}
  An \emph{L-field} is a partial exponential field in which every non-zero element has a logarithm. An \emph{EL-field} is a (total) exponential field in which every non-zero element has a logarithm. It is an \emph{LA-field} or \emph{ELA-field} respectively if, in addition, it is algebraically closed.
\end{defn}

The additive group of a field of characteristic zero is just a $\Q$-vector space, whereas the multiplicative group has torsion, the roots of unity, so an L-field must have non-trivial kernel. The most important case is when the kernel is an infinite cyclic group.

\begin{construction}
 Let $\Q_0$ be the partial E-field with underlying field $\Q$, and
 $D(\Q_0) = \{0\}$. Write $\Q^{\mathrm{ab}}$ for the maximal abelian
 extension of $\Q$, given by adjoining all roots of unity. Let
 $\Q^{\mathrm{ab}}(\tau)$ be a field extension with $\tau$ a single
 element, possibly in $\Q^{\mathrm{ab}}$ but non-zero. Let $CK_\tau$ be the partial E-field with underlying field $\Q^{\mathrm{ab}}(\tau)$, with $D(CK_\tau)$ the $\Q$-vector space spanned by $\tau$ and the $\exp(\tau/m)$ forming a coherent system of primitive $m^{\mathrm{th}}$ roots of unity. Then $CK_\tau$ is defined uniquely up to isomorphism by the minimal polynomial of $\tau$ over $\Q$. The letters ``CK'' stand for ``cyclic kernel''. In the special case where $\tau$ is transcendental, we write $SK$ for $CK_\tau$, meaning ``standard kernel''.
\end{construction}

More generally, following Zilber we say that a partial exponential field $F$ has \emph{full kernel} if the image of the exponential map contains the subgroup $\mu$ of all roots of unity (so, in particular, $F$ extends $\Qab$). The next proposition is implicit in \cite{Zilber05peACF0} and shows that the terminology is justified because the property of $F$ having full kernel depends only on the isomorphism type of the kernel of the exponential map as an abelian group.

\begin{prop}
  Let $F$ be a partial E-field extending $\Qab$, and let $K$ be the kernel of its exponential map. Then the following are equivalent.
\begin{enumerate}
  \item $F$ has full kernel
  \item $\Q K/K \iso \mu$
  \item For each $n \in \N^+$, $K/nK$ is a cyclic group of order $n$
  \item For each $n \in \N^+$, $|K/nK| = n$
  %\item The profinite completion $\hat{K}$ of $K$ satisfies $\hat{K} \iso \hat{\Z}$
  %\item $K \iso K_0 \oplus \Q^{\oplus \lambda}$ for some cardinal $\lambda$, where $K_0$ has no divisible elements and $\hat{K_0} \iso \hat{\Z}$
  \item $\tuple{K;+}$ is elementarily equivalent to $\tuple{\Z;+}$.
\end{enumerate}
Furthermore, if $F$ is a field extending $\Qab$, and $K$ is a subgroup of its additive group which satisfies the equivalent properties (2) --- (5), then there is a partial exponential map on $F$ with kernel $K$.
\end{prop}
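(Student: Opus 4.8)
The plan is to route all five conditions through the single statement $\Q K/K \iso \Q/\Z$. First some bookkeeping. Since $F \supseteq \Qab$ has characteristic zero and contains every root of unity, its group $\mu$ of roots of unity is isomorphic to $\Q/\Z$. The kernel $K$ lies in $D(F)$ and, being a subgroup of the $\Q$-vector space $\tuple{F;+}$, is torsion-free, so its $\Q$-span $\Q K \subseteq D(F)$ is the divisible hull of $K$ and $\exp$ is defined on it. For any $a \in \Q K$ some multiple $na$ lies in $K$, so $\exp(a)^n = 1$; hence $\exp$ maps $\Q K$ into $\mu$, and since $\ker \exp = K \subseteq \Q K$ we get $\exp(\Q K) \iso \Q K/K =: T$, a torsion group. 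Multiplication by $n$ gives an isomorphism $T[n] \iso K/nK$ for each $n$. I would also record two facts: a torsion abelian group is isomorphic to $\Q/\Z$ iff its $n$-torsion is cyclic of order $n$ for every $n$ (pass to $p$-primary parts and use that a $p$-group with socle of order $p$ embeds in $\Z(p^\infty)$), and a subgroup of $\Q/\Z$ abstractly isomorphic to $\Q/\Z$ is all of it (its $p$-primary part is an unbounded subgroup of $\Z(p^\infty)$).

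With these in hand, (1) $\Leftrightarrow$ (2): if $F$ has full kernel, each $\zeta \in \mu$ is $\exp(a)$ with $a \in D(F)$ and $na \in K$, so $a \in \Q K$; thus $\exp(\Q K) = \mu$ and $T \iso \mu$. Conversely, if $T \iso \mu$ then $\exp(\Q K) \iso T \iso \mu \iso \Q/\Z$ is a subgroup of $\mu$ isomorphic to all of it, hence equals $\mu$, so $\mu \subseteq I(F)$. Then (2) $\Leftrightarrow$ (3) is immediate from the first recorded fact together with $T[n] \iso K/nK$; (3) $\Rightarrow$ (4) is trivial; and for (4) $\Rightarrow$ (3), $|K/pK| = p$ forces $\dim_{\Fp} K/pK = 1$, and since $p^kK \subseteq pK$ the group $(K/p^kK)/p(K/p^kK) \iso K/pK$ is one-dimensional, so the finite abelian $p$-group $K/p^kK$ is cyclic, of order $p^k$ by (4); writing $n = \prod_i p_i^{k_i}$ gives $K/nK \iso \bigoplus_i K/p_i^{k_i}K$, cyclic of order $n$.

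For (4) $\Leftrightarrow$ (5) I would invoke the classical model theory of abelian groups. Since ``$[G:nG] = n$'' is first-order and holds in $\tuple{\Z;+}$, we get (5) $\Rightarrow$ (4). Conversely, by Szmielew's theorem the theory $\Th(\tuple{\Z;+})$ is axiomatised by torsion-freeness together with the sentences $[G:nG] = n$, $n \ge 1$; as $K$ is torsion-free and, by (4), satisfies all of these, $\tuple{K;+} \equiv \tuple{\Z;+}$. This is the one step using nontrivial external input, and is what I would expect to be the main thing to get exactly right (in particular, the correct axiomatisation).

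Finally, for the ``furthermore'': note that the equivalences (2)--(5) above used only that $K$ is torsion-free, not the exponential, so they hold for any subgroup $K$ of $\tuple{F;+}$. Given such a $K$ satisfying (2), put $D(F) := \Q K$, fix an isomorphism $\psi \colon \Q K/K \to \mu \subseteq F^\times$, and let $\exp_F$ be the composite $\Q K \onto \Q K/K \to F^\times$ of the quotient map with $\psi$. This is a homomorphism from a $\Q$-subspace of $F$ to the multiplicative group, so it makes $F$ a partial E-field, and its kernel is the kernel of the quotient map, namely $K$.
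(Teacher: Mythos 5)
Your proposal is correct and follows essentially the same route as the paper: the observation that $\exp(x)\in\mu$ iff $x\in\Q K$ together with the fact that $\mu$ (i.e.\ $\Q/\Z$) has no proper subgroup isomorphic to itself for (1)$\iff$(2), the multiplication-by-$n$ isomorphism $(\Q K/K)[n]\iso K/nK$ and the characterisation of $\mu$ by its $n$-torsion for (2)$\iff$(3), reduction to prime powers for (4)$\Rightarrow$(3), Szmielew for (4)$\iff$(5), and composing the quotient map $\Q K\onto \Q K/K$ with an isomorphism onto $\mu$ for the ``furthermore''. The only cosmetic difference is in (4)$\Rightarrow$(3), where you deduce cyclicity of $K/p^kK$ from $\dim_{\Fp}K/pK=1$ while the paper counts elements of order dividing $p^{k-1}$; both are fine.
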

We give the proof for the sake of completeness.
\begin{proof}
Note that for $x \in D(F)$, we have $\exp_F(x) \in \mu$ iff $x$ lies in the $\Q$-linear span of the kernel. Thus (1) $\implies$ (2). But also $\mu$ has no proper self-embeddings, so (2) $\implies$ (1).

Consider the ``multiply by $n$ map'' $n : \Q K \to \Q K$. For any $x \in \Q K$, $\exp(x)$ lies in the $n$-torsion of $\Q K/K$ iff $nx \in K$, so the $n$-torsion group of $\Q K/K$ is isomorphic to $n^{-1}K/K$. Since $\Q K$ is divisible and torsion-free, this is isomorphic under the multiply by $n$ map to $K/nK$. But the $n$-torsion of $\mu$ is the cyclic group of order $n$, so we have (2) $\implies$ (3). In fact, $\mu$ is defined up to isomorphism by being a torsion abelian group with this $n$-torsion for each $n$, so (3) $\implies$ (2). Clearly (3) $\implies$ (4). For the converse, it suffices to prove it where $n = p^r$, a prime power. But then we have $p^r$ elements of $K/p^r K$ of order dividing $p^r$ and only $p^{r-1}$ have order dividing $p^{r-1}$, and hence there is an element of order $p^r$, so $K/p^r K$ is cyclic of order $p^r$.

Property (4), together with being a torsion-free abelian group, gives a complete axiomatization of the elementary theory of $\tuple{\Z;+}$ by Szmielew's Theorem \cite[Theorem A.2.7]{Hodges93}, so (4) $\iff$ (5).

For the ``furthermore'' statement, by property (2) there is a homomorphism from $\Q K$ onto $\mu$ with kernel $K$ which makes $F$ into a partial E-field with full kernel.
\end{proof}
 In this paper we are mainly interested in exponential fields with a surjective exponential map, so most partial E-fields we consider will have full kernel. We also assume that extensions of partial E-fields are kernel-preserving (that is, do not add new kernel elements) unless otherwise stated.

Any partial E-field $F$ with full kernel can be extended to an ELA-field without adding new kernel elements. Indeed, we can produce free L-field, LA-field, EL-field, and ELA-field extensions of $F$, written $F^{L}$, $F^{LA}$, $F^{EL}$, and $F^{ELA}$ in analogy to before.
\begin{construction}~\label{ELA-construction}
  Let $F$ be a partial E-field with full kernel. We start by constructing a partial E-field extension $F^l$ of $F$ in which every element of $F$ has a logarithm, and there are no new kernel elements. Embed $F$ in a large algebraically closed field, $\mathcal{C}$. Inside $\mathcal{C}$ we have $F^{\rad}$, the field extension of $F$ obtained by adjoining all roots of all elements of $F$ and iterating this process. The multiplicative group $(F^\rad)^\times$ is divisible, and the image $\exp_F(D(F))$ contains the torsion and is divisible, so the quotient $(F^\rad)^\times/\exp_F(D(F))$ is a $\Q$-vector space.

 Choose $(b_i)_{i \in I}$ from $F$ such that the cosets $b_i \cdot \exp_F(D(F))$ form a $\Q$-linear basis of $(F^\rad)^\times/\exp_F(D(F))$. In other words, the $b_i$ form a multiplicative basis of $(F^\rad)^\times$ over $\exp_F(D(F))$. Now choose $(a_i)_{i \in I}$ from $\mathcal{C}$, algebraically independent over $F$, and for each $i \in I$, choose a coherent system of roots $(b_{i,m})_{m \in \N}$ of $b_i$.

 Let $D(F^l)$ be the $\Q$-subspace of $\mathcal{C}$ spanned by $D(F)$ and the $a_i$. Define $\exp(a_i/m) = b_{i,m}$ and extend the exponential map appropriately. Let $F^l$ be the subfield of $\mathcal{C}$ generated by $D(F^l)$ and $\exp(D(F^l))$. Then every element of $F$ has a logarithm in $F^l$. The isomorphism type of $F^l$ may depend on the choices made, but we write $F^l$ for any resulting partial E-field.

Now we define $F^{ELA}$ to be the union of any chain
 \[F \into F^{e} \into F^{el} \into F^{ela} \into F^{elae} \into F^{elael} \into
  \cdots\] 
iterating the three operations. The chain and its union are not necessarily uniquely defined because the operation $F \mapsto F^l$ is not necessarily uniquely defined. Where the union is uniquely defined we call it the \emph{free ELA-field extension} of $F$. The extensions $F^{L}$, $F^{LA}$, and $F^{EL}$ of $F$ are defined in the obvious way.
\end{construction}

\begin{lemma}\label{F strong in F^ELA}
  For any partial E-field, $F$, the extensions $F \into F^e$, $F \into
  F^a$, $F \into F^E$ and $F \into F^{EA}$ are strong. If $F^l$,
  $F^{ELA}$ are any results of Construction~\ref{ELA-construction}
  then the extensions $F \into F^l$ and $F \into F^{ELA}$ are strong.
\end{lemma}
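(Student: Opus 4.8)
The plan is to verify that each of the elementary building-block extensions $F \into F^e$, $F \into F^a$ and $F \into F^l$ is strong, and then to assemble $F \into F^E$, $F \into F^{EA}$ and $F \into F^{ELA}$ from these using the composition and chain-union parts of Lemma~\ref{strong lemma} (items (4), (6) and (7)). By item (5) of that lemma, to check that an extension $F \subs F_1$ is strong it suffices to bound $\delta(\xbar/D(F))$ below by $0$ for an arbitrary finite tuple $\xbar$ from $D(F_1)$; and by item (2) we may replace $D(F)$ by a suitable finite subtuple $\bbar$. So in each case the task reduces to a transcendence-degree bookkeeping computation with finite data.

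First I would treat $F \into F^a$: here $D(F^a) = D(F)$, so any tuple $\xbar$ from $D(F^a)$ already lies in $D(F)$, whence $\delta(\xbar/D(F)) = 0$ trivially. Next, $F \into F^e$: a tuple $\xbar$ from $D(F^e) = F$ can be written (modulo $D(F)$) in terms of the chosen basis elements $b_i$, and since the $c_{i,1} = \exp(b_i)$ were chosen algebraically independent over $F$ with their coherent roots, one computes that adjoining $k$ new $\Q$-linearly independent $b_i$'s contributes $k$ to both the transcendence degree term $\td(\xbar,\exp(\xbar)/D(F),\exp(D(F)))$ and to $\ldim_\Q(\xbar/D(F))$; the remaining part of $\xbar$ lies in $D(F)$ and contributes nothing net. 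Hence $\delta(\xbar/D(F)) \ge 0$. The case $F \into F^l$ is the analogous computation using Construction~\ref{ELA-construction}: the new domain elements $a_i$ are algebraically independent over $F$, and $\exp(a_i) = b_{i,1}$ together with its coherent roots lies in $F^{\rad}$, which is algebraic over $F$; so each new $\Q$-independent $a_i$ adds $1$ to the $\ldim_\Q$ term and exactly $1$ to the transcendence-degree term (the $a_i$ itself, the $\exp$-values being algebraic over $F \cup \{a_i\}$), giving $\delta = 0$ on the new generators and hence $\delta \ge 0$ overall.

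Finally, for the infinite iterations: $F^E = \bigcup_n F^{(e\cdots)}$ is a countable chain each of whose successive inclusions $F^{(\cdots)} \into F^{(\cdots)e}$ or $F^{(\cdots)} \into F^{(\cdots)a}$ is strong by the above, so by Lemma~\ref{strong lemma}(6) the base $F$ is strong in the union; likewise for $F^{EA}$ and for $F^{ELA}$ (using the $F \mapsto F^l$ step in addition). I expect the only real work to be the transcendence-degree bookkeeping for $F \into F^e$ and $F \into F^l$ — in particular being careful that the coherent systems of roots do not secretly inflate the transcendence degree (they do not, since a coherent system of roots of an element generates an algebraic, indeed abelian, extension of the field containing that element) and that the $\ldim_\Q$ count matches. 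Everything else is a direct appeal to the already-established properties of $\delta$ and strong extensions.
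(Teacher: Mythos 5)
Your proposal is correct and follows essentially the same route as the paper: verify directly from the constructions that $F \into F^e$, $F \into F^a$ and $F \into F^l$ are strong (the new domain elements being balanced, degree for dimension, by the algebraically independent elements adjoined), and then obtain $F^E$, $F^{EA}$ and $F^{ELA}$ from the transitivity and chain-union clauses of Lemma~\ref{strong lemma}. The paper's proof is just a terser version of the same bookkeeping.
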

\begin{proof}
  By construction, for any $\bar{y}$ from $D(F^e)$,
  $\delta(\bar{y}/D(F)) = 0$. Hence $F \strong F^e$. $F \strong F^l$ by the same argument. It is immediate that $F \strong F^a$ because the domain of the exponential map does not extend. The rest follows from Lemma~\ref{strong lemma}.
\end{proof}

In Construction~\ref{F^e construction} of $F^e$ from $F$ we made choices, but in fact the isomorphism type of $F^e$ as an extension of $F$ did not depend on those choices. 
In Construction~\ref{ELA-construction} of $F^l$ and $F^{ELA}$ we again made choices, but in this case the isomorphism types of the extensions do in general depend on those choices. Before giving conditions where the extensions do not depend on the choices, so are well-defined, we illustrate the problem.
 Let $F = CK_\tau^a$, so $D(F)$ is spanned by $\tau$. We want to
 define an extension $F_1$ of $F$ in which 2 has a logarithm. So let
 $F_1 = F(a)$ as a field, with $a$ transcendental over $F$. We define
 $\exp(a/m)$ to be an $m^{\mathrm{th}}$ root of 2. There is no problem
 in doing this, but all of these roots lie in $F$ because it is algebraically closed, so if we make one
 choice of roots and produce $F_1$, and then make a different choice
 of roots and produce $F_2$, then $F_1$ and $F_2$ will not be
 isomorphic as partial E-field extensions of $F$. In fact these
 different choices will all be isomorphic as partial exponential
 fields and even as extensions of $CK_\tau$. The problem is just that
 we had fixed all the roots of 2 in $F$ before we defined the
 logarithms of 2. The way to solve the problem is to put in the
 logarithms earlier in the construction. In fact it is often possible
 to do this because of an important fact about pure fields known as the Thumbtack Lemma. (An explanation of the name can be found in \cite[p19]{Baldwin_Categoricity}.)

The Thumbtack Lemma was proved by Zilber \cite[Theorem~2]{Zilber06covers} (with a correction by Bays and Zilber \cite[Theorem~2.3]{BZ11}). We will give three versions of it in this paper as we need them. All are special cases of the two quoted theorems, but we prefer to state exactly the form we need each time. Given an element $b$ of a field, we write $\sqrt{b}$ for the set of all the $m\th$ roots of $b$ for all $m \in \N$.
\begin{fact}[Thumbtack Lemma, version 1]\label{thumbtack1} \ \\
  Let $F = \Qab(a_1,\ldots,a_r,\sqrt{b_1},\ldots,\sqrt{b_r})$, an extension of $\Qab$ by finitely many generators together with all the roots of some of those generators. Now suppose that $c$ lies in some field extension of $F$ and is multiplicatively independent from $b_1,\ldots,b_r$. Then there is $m \in \N$ and an $m\th$ root $c_m$ of $c$ such that there is exactly one isomorphism type of a coherent system of roots of $c_m$ over $F$. That is, if $F_1$ and $F_2$ are both obtained from $F$ by adjoining $c_m$ and any coherent system of roots of $c_m$, then there is an isomorphism from $F_1$ to $F_2$ over $F$ which sends the chosen system of roots in $F_1$ to the chosen system in $F_2$. 
\end{fact}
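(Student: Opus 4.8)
The plan is to obtain this statement as the special case, for a single new element $c$, of the general Thumbtack Lemma of Zilber \cite[Theorem~2]{Zilber06covers} (corrected in \cite[Theorem~2.3]{BZ11}), which says that Kummer theory for a finitely generated multiplicative subgroup of a field containing $\Qab$ departs from the generic one by only a bounded amount; the work is in translating ``isomorphism type of a coherent system of roots'' into the number-theoretic data that result controls.

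First I would carry out that translation. Fix an $m$-th root $c_m$ of $c$ and let $L = F(\sqrt{c})$, the field generated over $F$ by all roots of $c$; since $\mu \subs F$, the extension $L/F$ is Galois. A coherent system $(c_m = e_1, e_2, e_3, \dots)$ of roots of $c_m$, with $e_{rk}^r = e_k$, generates $L$ over $F$, because each $e_k^m$ is a $k$-th root of $c$ and $\mu \subs F$. Two such systems $(e_k)$, $(e_k')$ differ by the element $(e_k'/e_k)_k$ of $\varprojlim_k \mu_k$, and $\sigma \mapsto (\sigma(e_k)/e_k)_k$ defines an injective homomorphism $\kappa$ from $\operatorname{Gal}(L/F(c_m))$ into $\varprojlim_k \mu_k$; there is an $F$-isomorphism carrying the first system to the second precisely when $(e_k'/e_k)_k$ lies in the image of $\kappa$. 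So it suffices to choose $m$ and $c_m$ making $\kappa$ surjective. I would in fact arrange that $c_m \in F$; then $F(c_m) = F$, and since $\mu_4 \subs F$ --- which folds the exceptional case $a \in -4(F^\times)^4$ of the criterion for irreducibility of $x^k - a$ into the case $a \in (F^\times)^4$ --- surjectivity of $\kappa$ is equivalent to $x^k - c_m$ being irreducible over $F$ for every $k$, hence to $c_m \notin (F^\times)^p$ for every prime $p$.

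It remains to produce such a $c_m$. For each prime $p$ let $d_p \in \{0,1,\dots,\infty\}$ be the largest $n$ with $c \in (F^\times)^{p^n}\mu$. Applying the cited theorem to the finitely generated subgroup $\gen{b_1,\dots,b_r,c}$ of the multiplicative group, and using that $c$ is multiplicatively independent from $b_1,\dots,b_r$ --- which is exactly what keeps each $d_p$ finite, since every $b_i$ has all of its roots in $F$ and is therefore infinitely divisible modulo $\mu$ --- one gets that $d_p < \infty$ for all $p$ and $d_p = 0$ for all but finitely many $p$. Put $m = \prod_p p^{d_p}$, a finite product. Since $c$ is a $p^{d_p}$-th power modulo $\mu$ for each $p$ and the $p^{d_p}$ are pairwise coprime with product $m$, we may write $c = u^m\zeta$ with $u \in F^\times$ and $\zeta \in \mu$; set $c_m = u\zeta_m \in F$, where $\zeta_m \in \mu$ is a chosen $m$-th root of $\zeta$. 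If $c_m = y^p$ with $y \in F^\times$ and $p$ prime, then $c = c_m^m = y^{pm} = \bigl(y^{m/p^{d_p}}\bigr)^{p^{d_p+1}}$ would exhibit $c$ as a $p^{d_p+1}$-th power, contradicting maximality of $d_p$. So this $c_m$ is as required, and the statement follows from the previous paragraph. (A naive finite-generation argument is unavailable here, since $F$ is not finitely generated over $\Qab$ once all the roots of the $b_i$ have been adjoined; this is why the cited theorem is needed.)

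The main obstacle is thus the number-theoretic input just invoked: bounding the failure of Kummer theory for $\gen{b_1,\dots,b_r,c}$ over $F$, and in particular its behaviour at $p = 2$, where $2$-power roots of unity and the fact that $x^{2^j} - a$ need not behave like a generic Kummer extension have to be treated by hand --- this is exactly where Zilber's original argument was incomplete and the Bays--Zilber correction enters. Accordingly I would not reprove it but simply cite \cite[Theorem~2]{Zilber06covers} and \cite[Theorem~2.3]{BZ11}.
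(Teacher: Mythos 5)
The paper does not prove this Fact at all: it is imported from the literature, with the remark that all three versions stated are special cases of Zilber \cite[Theorem~2]{Zilber06covers} as corrected by Bays--Zilber \cite[Theorem~2.3]{BZ11}. Your proposal rests on exactly the same citations, so at bottom it is the same approach, but you add a worked translation that the paper omits: you reduce the statement to boundedness of the divisibility of $c$ in $F^\times$ modulo roots of unity ($d_p<\infty$ for all $p$ and $d_p=0$ for almost all $p$), construct from this a $c_m\in F$ that is not a $p$-th power for any prime $p$, and then obtain uniqueness of the coherent system from standard Kummer theory over a field containing all roots of unity (irreducibility of $x^k-a$, with the $-4(F^\times)^4$ exception absorbed because $i\in F$, and surjectivity at every finite level passing to the inverse limit). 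That translation is sound and genuinely clarifies why the quoted theorems specialise to the form stated here. Two caveats. First, as written your argument tacitly assumes $c\in F$: both the definition of $d_p$ and the factorisation $c=u^m\zeta$ require $c\in F^\times$, whereas the Fact allows $c$ to lie in an extension of $F$; this is repaired in one line by replacing $F$ with $F(c)$, which is again of the permitted form and still satisfies the multiplicative-independence hypothesis, and since adjoining $c_m$ and its roots automatically adjoins $c$, uniqueness of the isomorphism type over $F(c)$ implies it over $F$. Second, the cited theorems are usually phrased as the uniqueness-of-isomorphism-type statement itself rather than as the bounded-divisibility statement you invoke, so you should either quote them in the latter form or note explicitly that bounded divisibility follows from their conclusion; either way the dependence on Zilber and Bays--Zilber is legitimate and is exactly how the paper treats this Fact.
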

Note that if $c$ is transcendental over $F$ then the result is trivial. However, when $c$ is algebraic over $F$ then there is something to prove, and the condition that $c$ is multiplicatively independent of the $b_i$ is essential. Note also that we cannot necessarily take $m=1$. For example, if $F = \Qab$ and $c = 9$ then $F$ certainly knows the difference between $\pm 3$, so we must take $m \ge 2$. Another version of the thumbtack lemma applies to extensions of an algebraically closed field.

\begin{fact}[Thumbtack Lemma, version 2]\ \\
  Let $F = K(a_1,\ldots,a_r,\sqrt{b_1},\ldots,\sqrt{b_r})$, where $K$ is an algebraically closed field of characteristic zero. Suppose that $c$ lies in some field extension of $F$ and is multiplicatively independent from $K^\cross \cdot \langle b_1,\ldots,b_r \rangle$. Then there is $m \in \N$ and an $m\th$ root $c_m$ of $c$ such that there is exactly one isomorphism type of a coherent system of roots of $c_m$ over $F$.
\end{fact}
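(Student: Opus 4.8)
The plan is to recast the conclusion as a statement about proper powers in $F^\times$ and to settle that using the structure of the multiplicative group of $F$. First I would dispose of the case where $c$ is transcendental over $F$: there one may take $m = 1$ and $c_1 = c$, since $c$ is not a $p$-th power in the rational function field $F(c)$ for any prime $p$, and the argument of the next step applies over $F(c)$. So assume $c$ is algebraic over $F$; replacing $F$ by $F(c)$ --- which is again finitely generated over $K$ with only the roots of the finitely many elements $b_1,\dots,b_r$ adjoined, and which still contains all roots of unity --- we may assume $c \in F^\times$. Now comes the Kummer-theoretic reduction: because $K$ is algebraically closed, $F$ contains every root of unity, so if $d \in F^\times$ is not a $p$-th power in $F^\times$ for any prime $p$, then (using that $d$ is not a root of unity) the Kummer correspondence computes $[F(d^{1/k}):F]$ as the order of $d$ in $F^\times/F^{\times k}$, which is $k$, for every $k \in \N$. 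It follows that the Galois group of $\bigcup_k F(d^{1/k})$ over $F$ is the procyclic group $\widehat{\Z}$, acting --- canonically, through the roots of unity --- simply transitively on the set of coherent systems of roots of $d$; hence all such systems are isomorphic over $F$, and ``over $F$'' is the correct notion because an isomorphism of systems must fix the common first term $d$. Thus it suffices to produce $m \in \N$ and an $m$-th root $c_m \in F^\times$ of $c$ that is not a $p$-th power in $F^\times$ for any prime $p$.

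To build such a $c_m$, let $V \le F^\times$ be the subgroup consisting of $K^\times$ times all roots of all the $b_i$; since $K$ is algebraically closed and those roots lie in $F$, $V$ is a divisible subgroup and equals the divisible hull of $K^\times\cdot\langle b_1,\dots,b_r\rangle$. The structural input I would use is that \emph{$V$ is the maximal divisible subgroup of $F^\times$ and $F^\times/V$ is torsion-free with no nonzero infinitely-divisible element} --- so every nonzero class is divisible by only finitely many primes and to finite order. Granting this, the hypothesis that $c$ is multiplicatively independent from $K^\times\cdot\langle b_1,\dots,b_r\rangle$ says exactly that $c \notin V$, so $[c] \neq 0$ in $F^\times/V$; let $N$ be the largest integer with $[c] \in N\cdot(F^\times/V)$ and write $[c] = N[g]$ with $g \in F^\times$ and $[g]$ divisible by no prime. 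Then $cg^{-N} \in V$, and $V$ being divisible we may write $cg^{-N} = v^N$ with $v \in V \subseteq F^\times$; set $m = N$ and $c_m = gv$, so $c_m^m = c$ and $c_m \in F^\times$. Since $[c_m] = [g]$ is divisible by no prime in $F^\times/V$, the element $c_m$ is not a $p$-th power in $F^\times$ for any $p$, and the previous step finishes the proof.

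The step I expect to be the main obstacle is the structural fact about $F^\times/V$: this is precisely where it matters that the ``field of constants'' $K$ is algebraically closed rather than, say, $\Qab$, and that the roots of only finitely many elements have been adjoined. I would prove it by writing $F$ as the increasing union of the function fields $K(\bar a)\big(b_i^{1/n}\,:\, i\le r,\ n\mid k\big)$ of varieties over $K$, as $k$ runs through $\N$ ordered by divisibility: for a smooth projective model $X$ over $K$ one has $K(X)^\times/K^\times \cong \mathrm{Prin}(X)$, a subgroup of the free abelian group $\mathrm{Div}(X)$ of Weil divisors and hence itself free abelian; and the passage from $K(\bar a)$ to $F$ by adjoining the roots of the $b_i$ is controlled, through Kummer theory over $K(\bar a)$, by the divisible torsion group that those roots generate modulo $K(\bar a)^\times$. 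Assembling these observations --- checking in particular that no divisible subgroup larger than $V$ survives in the direct limit and that the quotient acquires no infinite divisibility --- is routine but carries the real content; it runs parallel to the analysis underlying version~1 of the Thumbtack Lemma and the theorems of Zilber and Bays--Zilber quoted above, from which one could also simply read off the statement.
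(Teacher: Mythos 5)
A preliminary remark on the comparison: the paper does not prove this Fact at all. Like the other two versions of the Thumbtack Lemma, it is quoted as a special case of Zilber's theorem on covers of the multiplicative group, with the Bays--Zilber correction. So there is no internal argument to measure you against, and the fallback you mention in your last sentence (reading the statement off from the quoted theorems) is exactly what the paper does.

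Judged as a self-contained proof, your reductions are correct: disposing of the transcendental case, replacing $F$ by $F(c)$, the Kummer step (with all roots of unity present, an element of $F^\times$ which is not a $p$-th power for any prime $p$ generates degree-$k$ radical extensions for every $k$, and the Galois group acts simply transitively on coherent systems), and the construction of $c_m = gv$ from a class of maximal divisibility in $F^\times/V$ are all fine. The genuine gap is exactly where you locate it, but it is bigger than ``routine'': the bounded-divisibility statement for $F^\times/V$ is, in substance, the Thumbtack Lemma itself, and the two observations you list do not assemble into it by themselves. In particular, the obvious way to combine them --- given $c = f^N v$ with $f,v$ in a finite level $L_m = L(b_1^{1/m!},\ldots,b_r^{1/m!})$, $L = K(\bar{a},c)$, push the relation down to $L$ by raising to the power $m!$ --- is lossy and bounds nothing, because $m$ grows with the witness (the relation $c^{m!} = (f^{m!})^N v^{m!}$ carries no information when $N$ divides $m!$). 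What is missing is the untwisting argument at level $L_m$ over $L$: after normalising $v = \kappa\prod_i b_i^{s_i/m!}$, the map $\sigma \mapsto \sigma(f)/f$ is a character of $\mathrm{Gal}(L_m/L)$ with values in $\mu \subseteq L$, hence by Kummer duality a product of the coordinate characters $\sigma \mapsto \sigma(b_i^{1/m!})/b_i^{1/m!}$; this rewrites $f$ as an element of $L^\times$ times a product of roots of the $b_i$, and yields a relation $\bar{c} = N\bar{g} + \bar{h}$ in the free abelian group $L^\times/K^\times$ with $\bar{h}$ in the saturation $\tilde{B}$ of $\langle \bar{b}_1,\ldots,\bar{b}_r\rangle$. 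Since $\tilde{B}$ is a finitely generated pure subgroup of a free abelian group, it is a direct summand, and projecting to a complement bounds $N$ by the content of the component of $\bar{c}$ there, which is nonzero precisely because $c$ is multiplicatively independent from $K^\times\langle \bar{b}\rangle$. With that argument supplied (or with the citation to Zilber and Bays--Zilber, as in the paper), your proof closes up; as written, the central claim is asserted rather than proved.
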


%\Note{There are two more important generalizations: replacing $K$ by a compositum of alg closed fields, and replacing $c$ by a finite tuple which is multiplicatively independent over this compositum or $\Qab$, together with the $b_i$.}
\begin{defn}
 Let $F \subs F_1$ be an extension of partial E-fields. Then $F_1$ is \emph{finitely generated} as an extension of $F$ iff there is a finite subset $X \subs F_1$ such that $F_1 = \gen{F \cup X}_{F_1}$.

Now let $F$ be an ELA-field, and $X \subs F$ a subset. We define $\gen{X}_F^{ELA}$ to be the smallest ELA-subfield of $F$ which contains $X$. Note that it always exists, as the intersection of ELA-subfields of $F$ is again an ELA-subfield of $F$.
\end{defn}
Note also that $\gen{X}_F^{ELA}$ and $(\gen{X}_F)^{ELA}$ have different meanings. The first is the smallest ELA-subfield of $F$ which contains $X$, and the second is a free ELA-field extension of the smallest partial E-subfield of $F$ containing $X$, which may not be uniquely defined. In favourable circumstances (as below) the latter is well-defined and then the two ELA-fields will sometimes be isomorphic, but neither is generally true.

We now give sufficient conditions on $F$ for $F^{ELA}$ to be
well-defined. For example, from the first case we deduce that
$CK_\tau^{ELA}$ is well-defined. We only consider the case where $F$
is countable here. The general case seems to be more difficult.

\begin{theorem}\label{ELA-well-defined}
  If $F$ is a partial E-field with full kernel which is either finitely generated or a finitely generated extension of a countable LA-field, $F_0$, and $F \strong K$, $F \strong M$ are two strong extensions of $F$ to ELA-fields which do not extend the kernel, then $\gen{F}_K^{ELA} \iso \gen{F}_M^{ELA}$ as extensions of $F$. In particular:
  \begin{enumerate}
  \item The free ELA-closure $F^{ELA}$ of $F$ is well-defined.
\item The extension $F \strong K$ factors as $F \strong F^{ELA} \strong K$.
\end{enumerate}
\end{theorem}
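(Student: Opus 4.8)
The plan is a back-and-forth argument showing that $\gen{F}_K^{ELA}$ and $\gen{F}_M^{ELA}$ are isomorphic over $F$; the two itemized consequences then follow easily. First note that $F$ is countable under either hypothesis, and since each of the operations $(\cdot)^e$, $(\cdot)^l$, $(\cdot)^a$ of Construction~\ref{ELA-construction} adds only countably many new elements to a countable field, both $\gen{F}_K^{ELA}$ and $\gen{F}_M^{ELA}$ are countable ELA-fields, each of which is a strong extension of $F$ (the restriction of a strong extension to an intermediate partial E-field is strong). Moreover, any ELA-subfield of $K$ containing $F$ already contains all the exponentials, logarithms, coherent systems of roots, and algebraic elements produced when Construction~\ref{ELA-construction} is carried out inside $K$, so $\gen{F}_K^{ELA} = \bigcup_m F_m$ for a chain $F = F_0 \subs F_1 \subs \cdots$ obtained by iterating the three operations inside $K$; a routine predimension computation, using $F \strong K$, shows each $F_m \strong K$, whence by Lemma~\ref{strong lemma}(7) we get $F \strong \gen{F}_K^{ELA} \strong K$, and likewise on the $M$ side. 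This already yields part (2) once the isomorphism is established.

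For the isomorphism I would construct an increasing chain of partial E-field isomorphisms $\theta_n \colon A_n \to B_n$, each fixing $F$ pointwise, with $A_n$ a finitely generated strong partial E-subfield of $\gen{F}_K^{ELA}$ (hence $A_n \strong K$) and $B_n$ likewise inside $M$, arranged so that $\bigcup_n A_n = \gen{F}_K^{ELA}$ and $\bigcup_n B_n = \gen{F}_M^{ELA}$. That an arbitrary finite subset of $\gen{F}_K^{ELA}$ lies in such a finitely generated strong $A_n$ — its strong hull — follows from Lemma~\ref{strong lemma}(2): repeatedly adjoining a finite tuple witnessing a negative value of $\delta$ must terminate, since $\delta$ is bounded below on a fixed finite-dimensional configuration. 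So the argument reduces to the extension step: given $\theta_n$ as above and a finitely generated strong extension $A_n \strong A \subs \gen{F}_K^{ELA}$, produce a finitely generated strong extension $B_n \strong B \subs \gen{F}_M^{ELA}$ with an isomorphism $A \to B$ extending $\theta_n$ (and symmetrically with $K$ and $M$ swapped).

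To prove the extension step I would factor $A_n \strong A$ as a finite tower of one-generator extensions of three types and handle each. (i) Adjoining an element algebraic over the current field: matched canonically, using that $\gen{F}_M^{ELA}$ is algebraically closed and that our partial E-fields are algebraic over domain-plus-image. (ii) Adjoining $\exp(x)$ with a coherent system of its roots for $x$ already in the domain: strongness forces the predimension of the step to be $0$, which via the Addition Property pins down the transcendence degree of the new extension; in the generic subcase $\exp(x)$ together with a coherent system of roots is transcendental and generic over the current field, hence unique up to isomorphism over it and realized in $\gen{F}_M^{ELA}$ because that field is ELA, while in the subcase where a multiplicative relation is forced, the Thumbtack Lemma (version 2, over the algebraic closure of the current field) gives uniqueness of the coherent system after replacing $\exp(x)$ by a suitable $\exp(x/k)$. (iii) Adjoining a logarithm $a$ of an element $b$ already in the domain: here the choice of the coherent system $(\exp(a/m))_m$ of roots of $b$ genuinely matters, as the $CK_\tau^a$ example shows; having arranged, as in Construction~\ref{ELA-construction}, that $b$ is multiplicatively independent of the image of the current exponential map and of the other logarithm-generators, the Thumbtack Lemma supplies a $k$ and a $k$-th root of $b$ over which the coherent system of roots is unique up to isomorphism, the target logarithm existing in $\gen{F}_M^{ELA}$ since $M$ is an L-field. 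Taking $\theta = \bigcup_n \theta_n$ finishes the isomorphism, and hence part (2); for part (1) one applies the isomorphism statement with $K$ and $M$ two outputs of Construction~\ref{ELA-construction}, each being ELA, strong over $F$ by Lemma~\ref{F strong in F^ELA}, kernel-preserving, and equal to its own $\gen{F}^{ELA}$ by the observation in the first paragraph.

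The hard part is the extension step, and within it the logarithm case (iii): organizing a finitely generated strong extension into a tower of steps to which the Thumbtack Lemma applies, and in particular verifying the multiplicative-independence hypothesis each time so that one is genuinely in the situation of Fact~\ref{thumbtack1} (or its second version), is exactly the delicate point flagged by the flawed argument in \cite{Zilber05peACF0} and the $CK_\tau^a$ discussion above, and it is there that the multiplicative-basis bookkeeping built into Construction~\ref{ELA-construction} does the real work.
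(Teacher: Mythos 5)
Your strategy coincides with the paper's: exhaust $\gen{F}_K^{ELA}$ by one-generator steps of the three kinds (algebraic element, exponential of an existing element, logarithm of an existing element), keep every intermediate partial E-field strong in $K$ resp.\ $M$, and use the Thumbtack Lemma to control coherent systems of roots. The genuine gap is exactly at the point you defer, case (iii), and your proposed mechanism for it is not available. You write ``having arranged, as in Construction~\ref{ELA-construction}, that $b$ is multiplicatively independent of the image of the current exponential map and of the other logarithm-generators'' --- but here nothing is being built freely: $b$ is whatever element of the already-given subfield $\gen{F}_K^{ELA}$ the enumeration hands you, and it may perfectly well be multiplicatively dependent on elements whose full root systems you have already adjoined. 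The paper closes this by carrying an explicit invariant through the induction: each stage is, as a pure field, of the form $F_0(\bar{\alpha},\sqrt{\bar{\beta}})$ with $F_0$ either $\Qab$ or the countable algebraically closed (indeed LA-) field, and every $\beta_i$ lies in the image of the current exponential map. Then a multiplicative dependence $b^k = c'\prod_i \beta_i^{m_i}$ (with $c'$ a root of unity, resp.\ $c' \in F_0^\times$, which equals $I(F_0)$ since $F_0$ is an L-field) puts $b^k$ into the image, and kernel-preservation forces the prospective logarithm to lie already in the current domain, so that subcase never reaches the Thumbtack Lemma; in the remaining case the hypothesis of Fact~\ref{thumbtack1} (or its second version) holds verbatim. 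Without this invariant, or an equivalent verification, your step (iii) does not go through, and this verification is the real content of the theorem.

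A secondary problem is your reduction: you propose to factor an \emph{arbitrary} finitely generated strong extension $A_n \strong A$ inside $\gen{F}_K^{ELA}$ into a tower of the three types of one-generator steps, with no argument; the exponentials and logarithms witnessing that an element lies in $\gen{F}_K^{ELA}$ have no reason to stay inside a prescribed strong hull, so this needs proof. It is also unnecessary: as in the paper, take the next stage to be the partial E-subfield obtained from $A_n$ by a finite tower of the three operations reaching the next element of an enumeration of $\gen{F}_K^{ELA}$; each such step over a base that is strong in $K$ is automatically strong in $K$ (the new domain element $d$ satisfies $\delta(d/D(A_n))=0$ because either $d$ or $\exp_K(d)$ is algebraic over the old field), so exhaustion, strongness, and in the end $F \strong \gen{F}_K^{ELA} \strong K$ all fall out of the induction rather than needing your separate first-paragraph argument. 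Finally, in your case (ii) the Thumbtack Lemma is not needed at all: if $\exp_K(x)$ were algebraic over the current strong stage, strongness already forces $x$ into the current domain, so the only genuine subcase is the transcendental one, exactly as in the paper's Case 2.
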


\begin{proof}
  Statements (1) and (2) are immediate from the main part of the theorem. For the main part, enumerate $\gen{F}_K^{ELA}$ as $s_1,s_2,s_3,\ldots$, such that for each $n \in \N$, either
\begin{enumerate}[i)]
 \item $s_{n+1}$ is algebraic over $F \cup \{s_1,\ldots,s_n\}$; or
 \item $s_{n+1} = \exp_K(a)$ for some $a \in F \cup \{s_1,\ldots,s_n\}$; or
 \item $\exp_K(s_{n+1}) = b$ for some $b\in F \cup \{s_1,\ldots,s_n\}$.
\end{enumerate}
This is possible by the definition of $\gen{F}_K^{ELA}$. We will inductively construct chains of partial E-subfields \[F= K_0 \subs K_1 \subs K_2 \subs \cdots\]
 of $K$ and
\[F= M_0 \subs M_1 \subs M_2 \subs \cdots\]
of $M$, and nested isomorphisms $\theta_n: K_n \to M_n$ such that for each $n \in \N^+$ we have $s_n \in K_n$, $K_n \strong K$ and $M_n \strong M$. We also ensure that, as a pure field, each $K_n$ has the form $F_0(\bar{\alpha},\sqrt{\bar{\beta}})$ for some finite tuples $\bar{\alpha},\bar{\beta}$, and $F_0$ either $\Qab$ or a countable algebraically closed field.

We start by taking $\theta_0$ to be the identity map on $F$. Now assume we have $K_n$, $M_n$, and $\theta_n$.
\paragraph{Case 1)} $s_{n+1}$ is algebraic over $K_n$ (including the case where $s_{n+1} \in K_n$). Let $p(X)$ be the minimal polynomial of $s_{n+1}$ over $K_n$. The image $p^\theta$ of $p$ is an irreducible polynomial over $M_n$, so let $t$ be any root of $p^\theta$ in $M$. Let $K_{n+1} = K_n(s_{n+1})$, $M_{n+1} = M_n(t)$, and let $\theta_{n+1}$ be the unique field isomorphism extending $\theta_n$ and sending $s_{n+1}$ to $t$. We make $K_{n+1}$ and $M_{n+1}$ into partial exponential fields by taking the graph of exponentiation to be the graph of $\exp_K$ or $\exp_M$ intersected with $K_{n+1}^2$ or $M_{n+1}^2$ respectively. Suppose that $(a,\exp_K(a)) \in K_{n+1}^2$. Since $K_n \strong K$, we have $\td(a,\exp_K(a)/K_n) - \ldim_\Q(a/D(K_n)) \ge 0$. But $K_{n+1}$ is an algebraic extension of $K_n$, so it follows that $\ldim_\Q(a/D(K_n)) = 0$, that is, that $a \in D(K_n)$. Hence $D(K_{n+1}) = D(K_n)$. The same argument shows that $D(M_{n+1}) = D(M_n)$. Now if $\xbar$ is any tuple from $K$, we have $\delta(\xbar/D(K_{n+1})) = \delta(\xbar/D(K_n)) \ge 0$, and hence $K_{n+1} \strong K$, and similarly $M_{n+1} \strong M$. It is immediate that the pure field $K_{n+1}$ is of the form $F_0(\bar{\alpha},\sqrt{\bar{\beta}})$ because $K_n$ is of that form.

\paragraph{Case 2)} $s_{n+1}$ is transcendental over $K_n$ and $s_{n+1} = \exp_K(a)$ for some $a \in K_n$. Let $K_{n+1} = K_n(\sqrt{s_{n+1}})$ and $M_{n+1} = M_n(\sqrt{\exp_M(\theta_n(a))})$. Extend $\theta_n$ by defining $\theta_{n+1}(\exp_K(a/m)) = \exp_M(\theta_n(a)/m)$, and extending to a field isomorphism. This is possible because $s_{n+1}$ is transcendental over $K_n$ and $\exp_M(\theta_n(a))$ is transcendental over $M_n$ (the latter because $M_n \strong M$), and so there is a unique isomorphism type of a coherent system of roots of $s_{n+1}$ over $K_n$, and of $\exp_M(\theta_n(a))$ over $M_n$. Then $\td(K_{n+1}/K_n) = 1$, $a \in D(K_{n+1}) \minus D(K_n)$, and $K_n \strong K$, so $D(K_{n+1})$ is spanned by $D(K_n)$ and $a$. Similarly, $D(M_{n+1})$ is spanned by $\theta_n(a)$ over $D(M_n)$, so $\theta_{n+1}$ is an isomorphism of partial E-fields.

Now if $\xbar$ is any tuple from $K$, we have 
\[\delta(\xbar/D(K_{n+1})) = \delta(\xbar,a/D(K_n)) - \delta(a/D(K_n)) = \delta(\xbar,a/D(K_n)) - 0 \ge 0\]
as $K_n \strong K$, so $K_{n+1} \strong K$. The same argument shows that $M_{n+1} \strong M$. Again, it is immediate that the pure field $K_{n+1}$ is of the required form.

\paragraph{Case 3)} $s_{n+1}$ is transcendental over $K_n$, not of the form $\exp_K(a)$ for any $a \in K_n$, but $\exp_K(s_{n+1}) = b$ for some $b \in K_n$. By hypothesis, $K_n$ has the form $F_0(\bar{\alpha},\sqrt{\bar{\beta}})$ for some finite tuples $\bar{\alpha},\bar{\beta}$, and $F_0$ either $\Qab$ or a countable algebraically closed field. Hence, by either version~1 or version~2 of the Thumbtack Lemma, there is $N \in \N^+$ and $c$ such that $c^N = b$ and there is a unique isomorphism type of a coherent sequence of roots of $c$ over $K_n$. Let $t \in M$ be such that $\exp_M(t) = \theta_n(c)$. Let $K_{n+1} = K_n(s_{n+1},\sqrt{c})$ and $M_{n+1} = M_n(t, \sqrt{\theta_n(c)})$. Extend $\theta_n$ by defining $\theta_{n+1}(s_{n+1}) = Nt$, and $\theta_{n+1}(\exp_K(s_{n+1}/Nm)) = \exp_M(t/m)$, and extending to a field isomorphism. This is possible by the choice of $N$, the fact that $s_{n+1}$ is transcendental over $K_n$ and (since $M_n \strong M$) the fact that $t$ is transcendental over $M_n$. As in Case 2 above, we have $K_{n+1} \strong K$, $M_{n+1} \strong M$, and the pure field $K_{n+1}$ of the required form.

\paragraph{Conclusion}
That completes the induction. Let $K_\omega = \bigcup_{n \in \N} K_n$. Then $K_\omega = \gen{F}_K^{ELA}$ because $K_\omega$ is an ELA-subfield of $K$ containing $F$ and is the smallest such because at each stage we add only elements of $K$ which must lie in every ELA-subfield of $K$ containing $F$. The union of the maps $\theta_n$ gives an embedding of $K_\omega$ into $M$, and, for the same reason, the image must be $\gen{F}_M^{ELA}$. Hence $\gen{F}_K^{ELA} \iso \gen{F}_M^{ELA}$ as required.
\end{proof}

\section{Finitely presented extensions}

 We say that a partial E-field $F$ is \emph{finitely generated} if there is a finite subset $X$ of $F$ such that $F = \gen{X}_F$. We restrict now to those partial E-fields $F$ which are generated as fields by $D(F) \cup I(F)$ (call them \emph{\egg}). Similarly, an ELA-field $F$ is finitely generated as an ELA-field if $F = \gen{X}_F^{ELA}$ for some finite subset $X$ of $F$. An extension $F \subs F_1$ of ELA-fields is finitely generated iff there is a finite subset $X$ of $F_1$ such that $F_1 = \gen{F \cup X}_{F_1}^{ELA}$, and similarly for partial E-fields.

Let $F \subs F_1$ be a finitely generated extension of \egg\ partial E-fields, say generated by $a_1,\ldots,a_n \in D(F_1)$. Then the isomorphism type of the extension is given by the algebraic type of the infinite tuple $(\bar{a},\exp(\bar{a}/m))_{m \in \Z}$ over $F$. Let
 \[I(\bar{a}) = \class{f \in F[\bar{X},(Y_{m,i})_{m \in \Z, i=1,\ldots,n}]}{f(\bar{a},(e^{a_i/m}))
  = 0}\]
 and for $m \in \N^+$, let 
\[I_m(\bar{a}) = \class{f \in F[\bar{X},\bar{Y}_m, \Ybar_m^{-1}]}{f(\bar{a}/m,e^{\bar{a}/m}, e^{-\bar{a}/m}) = 0}.\]

The ideal $I(\bar{a})$ contains all the \emph{coherence polynomials} of the form $Y_{mr,i}^r - Y_{m,i}$ for each $m,r \in \Z$, and each $i = 1,\ldots,n$, which force each sequence $e^{a_i}, e^{a_i/2},e^{a_i/3},\ldots$ to be a coherent system of roots. Including the negative powers ensures that they are nonzero. The ideal $I(\abar)$ determines all the ideals $I_m(\abar)$ and if $m_1$ divides $m_2$ then $I_{m_1}(\bar{a})$ is determined by $I_{m_2}(\bar{a})$.

\begin{defn}
  An ideal $I$ of the polynomial ring $F[\bar{X},(Y_{m,i})_{m \in \Z, i=1,\ldots,n}]$ is \emph{additively free} iff it does not contain any polynomial of the form $\sum_{i=1}^n r_iX_i - c$ with the $r_i \in \Z$, not all zero, and $c \in D(F)$. It is \emph{multiplicatively free} iff it does not contain any polynomial of the form $\prod_{i=1}^n Y_{1,i}^{r_i} - d$ with the $r_i \in \Z$, not all zero, and $d \in \exp(D(F))$. Similarly we say that $I_m$ is additively free or multiplicatively free if it does not contain any polynomials of these forms.
\end{defn}
If $F \subs F_1$ is a finitely generated extension of \egg\ partial E-fields, we may choose the generators $a_1,\ldots,a_n$ to be $\Q$-linearly independent over $D(F)$, and this corresponds to the ideal $I(\abar)$ being additively free. Conversely, if $I$ is any prime ideal of the polynomial ring $F[\bar{X},\bar{Y}_1,\bar{Y}_2,\bar{Y}_3,\ldots]$ which contains the coherence polynomials and is additively free, then it defines an extension $F_I$ of $F$, the field of fractions of the ring $F[\bar{X},\bar{Y}_1,\bar{Y}_2,\bar{Y}_3,\ldots]/I$, with exponentiation defined in the obvious way. All we have really done is translated Construction~\ref{E-field construction} into the language of ideals.
\begin{lemma}
  If $I$ is a prime ideal containing the coherence polynomials and is additively free, then the extension $F_I$ it defines has the same kernel as $F$ iff $I$ is multiplicatively free.
\end{lemma}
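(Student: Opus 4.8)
The plan is to unwind both sides of the equivalence and to play off the $\Q$-linear independence of the generators against the multiplicative independence of their exponentials, using the coherence polynomials to pass between the $Y_{m,i}$ and the $Y_{1,i}$. Write $a_i$ and $y_{m,i}$ for the images of $X_i$ and $Y_{m,i}$ in $F_I$, so that $e^{a_i/m} = y_{m,i}$ and, by construction, $D(F_I) = D(F) + \sum_i \Q a_i$ while $F$ embeds in $F_I$ as a partial E-field; in particular $\ker\exp_F \subs \ker\exp_{F_I}$, and $\exp_{F_I}$ restricts to $\exp_F$ on $D(F)$, so the two kernels agree if and only if $\ker\exp_{F_I} \subs D(F)$. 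Since $I$ is prime and additively free it contains no polynomial $\sum_i r_i X_i - c$ with $c \in D(F)$ and the $r_i \in \Z$ not all zero; equivalently the $a_i$ are $\Q$-linearly independent over $D(F)$ in $F_I$, so every $x \in D(F_I)$ has a unique expression $x = c + \tfrac{1}{m}\sum_i r_i a_i$ with $c \in D(F)$, $m \in \N^+$, $r_i \in \Z$, and then $\exp_{F_I}(x) = \exp_F(c)\prod_i y_{m,i}^{r_i}$. Thus it is enough to show that $F_I$ has a kernel element outside $D(F)$ if and only if $I$ is not multiplicatively free.

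Suppose first that $I$ is not multiplicatively free, say $I$ contains $\prod_i Y_{1,i}^{r_i} - d$ with the $r_i \in \Z$ not all zero and $d = \exp_F(c_0)$ for some $c_0 \in D(F)$ (when some $r_i$ are negative this should be read as the corresponding denominator-cleared polynomial; the $y_{1,i}$ are units of $F_I$, so this causes no trouble). Then $\exp_{F_I}\bigl(\sum_i r_i a_i - c_0\bigr) = \bigl(\prod_i y_{1,i}^{r_i}\bigr)\exp_F(-c_0) = 1$, while $\sum_i r_i a_i - c_0 \notin D(F)$ by the $\Q$-linear independence; this is a kernel element of $F_I$ outside $D(F)$. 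Conversely, suppose $x = c + \tfrac{1}{m}\sum_i r_i a_i \in \ker\exp_{F_I}$, so $\prod_i y_{m,i}^{r_i} = \exp_F(-c)$ in $F_I$. Raising to the $m$th power and using $y_{m,i}^m = y_{1,i}$ (which holds in $F_I$ because the coherence polynomial $Y_{m,i}^m - Y_{1,i}$ lies in $I$) gives $\prod_i y_{1,i}^{r_i} = \exp_F(-mc)$ with $-mc \in D(F)$. As $I$ is prime we have $I = \ker\bigl(F[\bar X,\bar Y_1,\bar Y_2,\dots] \to F_I\bigr)$, so clearing denominators by the units $y_{1,i}$ exhibits a polynomial of the forbidden form in $I$ unless all the $r_i$ vanish. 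Hence if $I$ is multiplicatively free then all $r_i = 0$, so $x = c \in D(F)$ and $\ker\exp_{F_I} \subs D(F)$, as wanted.

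The argument has no real difficulty in it. The one point needing a little care is the passage between the $y_{m,i}$ --- which enter because a putative new kernel element may carry a denominator $m > 1$ --- and the $y_{1,i}$ appearing in the definition of multiplicative freeness; this is exactly what the coherence polynomials provide, together with the routine observation that an identity valid in the fraction field $F_I$ becomes, after clearing denominators by the units $y_{1,i}$, an honest membership relation in the prime ideal $I$. It is worth noting that $F$ is not assumed to have full kernel here, and the coherence trick is precisely what allows one to sidestep any argument about which roots of unity lie in $\exp(D(F))$.
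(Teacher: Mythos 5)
Your proof is correct and follows essentially the same route as the paper's: the forward direction produces the new kernel element $\sum_i r_i a_i - c_0$ from a witness to multiplicative non-freeness, and the converse clears the denominator $m$ and invokes multiplicative freeness to force all $r_i = 0$. Your extra care with the coherence relation $y_{m,i}^m = y_{1,i}$ and with clearing denominators in the prime ideal $I$ just makes explicit steps the paper leaves implicit.
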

\begin{proof}
 Write $a_i$ for the image of $X_i$ in $F_I$. If $I$ is not multiplicatively free then for some $r_i \in \Z$, not all zero, and some $c \in D(F)$, we have $\prod_{i=1}^n e^{r_ia_i} = e^c$, so $c-\sum_{i=1}^n r_ia_i$ lies in the kernel of $\exp_{F_I}$. Since $I$ is additively free, this element does not lie in $D(F)$, in particular it does not lie in the kernel of $\exp_{F}$. Conversely, if $I$ is multiplicatively free and $\exp(c + 1/m\sum_{i=1}^n r_i a_i) = 1$ with $c \in D(F)$ and $m,r_i \in \Z$, then $\prod_{i=1}^n\exp(a_i)^{r_i} = \exp(c)^m$, so $r_i = 0$ for each $i$, and $c$ lies in the kernel of $\exp_{F}$.
\end{proof}

\begin{defn}
  We say that an extension $F \subs F_1$ of partial E-fields is \emph{finitely presented} iff it has a finite generating set $a_1,\ldots,a_n$, which is \Q-linearly independent from $D(F)$, such that $I(\abar)$ is generated as an ideal by the coherence polynomials together with a finite set of other polynomials.
\end{defn}
\begin{defn}
  An additively free prime ideal $J$ of $F(\Xbar,\Ybar_1,\Ybar_1^{-1})$ is said to be \emph{\Kummergen} iff there is only one additively free prime ideal $I$ of $F[\bar{X},(Y_{m,i})_{m \in \Z, i=1,\ldots,n}]$, containing the coherence polynomials, such that $J = I_1$, as defined above.
\end{defn}
The term \emph{Kummer-generic} is due to Martin Hils, \cite[p10]{Hils10}. The usage here is not exactly the same as in that paper, because there they consider only adding new points to the multiplicative group, whereas here we are adding $\abar$ to the additive group as well as $e^\abar$ to the multiplicative group. The connection with Kummer theory can be seen from \cite[Lemma~5.1]{BZ11}.

\begin{lemma}\label{rootdet lemma}
  If $F \subs F_1$ is a finitely presented extension of partial E-fields, then it has a generating set $a_1',\ldots,a_n'$ such that the ideal $I_1(\abar')$ is \Kummergen.
\end{lemma}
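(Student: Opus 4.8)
The plan is to keep the same number of generators but divide the given ones by a suitable integer $N$, chosen to do two jobs at once: to push the finite presentation down to level $1$, and to make the coherent system of roots of the exponentials of the generators rigid. So suppose $\abar = (a_1,\dots,a_n)$ is a generating set witnessing finite presentation: $\Q$-linearly independent from $D(F)$, with $I(\abar)$ generated by the coherence polynomials together with finitely many polynomials $f_1,\dots,f_k$, each $f_j$ involving only variables $Y_{m,i}$ with $m$ in some fixed finite set; let $N_0$ be a common multiple of all those $m$. For any multiple $N$ of $N_0$ I set $a_i' = a_i/N$; then $\abar'$ generates the same extension $F \subs F_1$ and is again $\Q$-linearly independent from $D(F)$.

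First I would check that this change of generators lowers the presentation to level $1$. Since $e^{a_i/m} = (e^{a_i'})^{N/m}$ whenever $m \mid N$, and $a_i = N a_i'$, the change of generators identifies $I(\abar)$ with $I(\abar')$ modulo the coherence polynomials, and carries each $f_j$ (which lives at level dividing $N_0 \mid N$) to a polynomial in the level-$1$ variables $\Xbar', \Ybar_1'$ alone. Hence $I(\abar')$ is generated by the coherence polynomials together with finitely many polynomials in $F[\Xbar',\Ybar_1',(\Ybar_1')^{-1}]$; equivalently, $I(\abar') = (\text{coherence polynomials}) + I_1(\abar')$, and this ideal is prime.

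Next I would reformulate Kummer-genericity. If $I$ is any additively free prime containing the coherence polynomials with $I_1 = I_1(\abar')$, then $I_1(\abar') \subs I$, so (by the previous paragraph) $I \sups I(\abar')$; and such an $I$ is automatically additively free, since a polynomial $\sum r_i X_i' - c$ with $c \in D(F)$ would lie in $I \cap F[\Xbar',\Ybar_1',(\Ybar_1')^{-1}] = I_1(\abar')$, which is additively free. The primes $I$ with $I_1 = I_1(\abar')$ correspond exactly to isomorphism types of coherent systems of roots of $e^{\abar'}$ over the field $L' := F[\Xbar',\Ybar_1',(\Ybar_1')^{-1}]/I_1(\abar') = F(\abar', e^{\abar'})$, with $I(\abar')$ itself corresponding to the generic one; so $I_1(\abar')$ is \Kummergen\ if and only if this coherent system of roots over $L'$ is rigid, i.e.\ unique up to an $L'$-isomorphism matching the systems. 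To arrange this I would invoke the Thumbtack Lemma. To bring the base into the shape the lemma demands, work over $F^\alg$: an isomorphism of marked coherent systems over $F^\alg(\abar', e^{\abar'})$ fixes $F$ and so restricts to one over $L'$, hence rigidity over the former descends to rigidity over the latter; and $F^\alg(\abar', e^{\abar'})$ is an algebraically closed field with finitely many generators adjoined, so version~2 of the Thumbtack Lemma applies. Applying it successively to $e^{a_1},\dots,e^{a_n}$, accumulating the new roots into the base at each step — the $\Q$-linear independence of $\abar$ from $D(F)$ giving multiplicative independence modulo $\exp(D(F))$, and the finite-presentation hypothesis handling the rest of the multiplicative-independence input — produces a single $N$ (enlarge it to a common multiple with $N_0$) for which the coherent system of roots of $e^{\abar/N}$ over $F^\alg(\abar/N, e^{\abar/N})$ is rigid. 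Taking this $N$ and $\abar' = \abar/N$, the second paragraph gives the level-$1$ presentation and the reformulation then gives that $I_1(\abar')$ is \Kummergen.

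The hard part will be this last step: producing a genuinely legitimate application of the Thumbtack Lemma. The two delicate points are placing the base field in the exact form the lemma requires (dealt with by passing to $F^\alg$ and transferring rigidity back down) and verifying its multiplicative-independence hypothesis. The second point is exactly where finite presentation is essential: for a non-finitely-presented extension an exponential $e^{a_i}$ can be algebraic over $F$ with a coherent system of roots that branches at arbitrarily large primes, which no passage to roots can rigidify — the phenomenon illustrated in the paper by adjoining a logarithm of $2$ to $CK_\tau^a$.
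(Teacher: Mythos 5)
Your first two paragraphs are essentially the paper's whole proof (rescale by a common multiple $N$ of the levels occurring in a finite generating set, so that $I(\abar')$ is generated by the coherence polynomials together with level-$1$ polynomials), but the third paragraph, where you try to finish via the Thumbtack Lemma, has a genuine gap. Version~2 of that lemma needs $e^{a_1'},\ldots,e^{a_n'}$ to be multiplicatively independent from $(F^{\alg})^\times$ (together with the roots accumulated into the base), and neither hypothesis of the lemma supplies this: $\Q$-linear independence of $\abar$ over $D(F)$ is additive freeness and says nothing about multiplicative relations of $e^{\abar}$, and finite presentation does not imply multiplicative freeness either. Concretely, let $F$ be the partial E-field on $\Q$ with trivial exponential graph and let $F_1$ be generated by one element $a$ with $\exp(a)=2$ and some coherent system of roots $(e^{a/m})_m$; since $T^m-2$ is irreducible over the level-$1$ field $\Q(a)$ for every $m$, the ideal $I(a)$ is generated by the coherence polynomials together with $Y_{1,1}-2$, so this extension is finitely presented, yet $e^{a}=2$ is multiplicatively dependent on $(F^{\alg})^\times$. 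Worse, your intermediate target is false there: over $F^{\alg}(\abar',e^{\abar'})$ both square roots of $2$ lie in the base, so coherent systems of roots of $e^{a}$ are \emph{not} rigid over that field, even though $I_1$ is \Kummergen\ over $F$ (which is what the lemma asserts). So no choice of $N$ can make the Thumbtack argument over $F^{\alg}$ go through in general; the phrase ``the finite-presentation hypothesis handling the rest of the multiplicative-independence input'' is exactly the unproved (and unprovable) step. Similar examples exist with full kernel, so this is not excluded by the paper's standing conventions.

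The correct finish is already available at the end of your second paragraph, and it is what the paper means by ``$I(\abar)$ is determined by $I_N(\abar)$'': if $I$ is any additively free prime containing the coherence polynomials with $I_1=I_1(\abar')$, then $I$ contains the generators of $I(\abar')$, hence $I\sups I(\abar')$; and $I$ cannot be strictly larger, because modulo $I(\abar')$ the full ring is integral over its level-$1$ subring (each $Y'_{m,i}$ satisfies $T^m-Y'_{1,i}$), so a nonzero prime of the quotient domain would contract to a nonzero prime of the level-$1$ subring, contradicting $I\cap F[\Xbar',\Ybar_1,\Ybar_1^{-1}]=I_1(\abar')$. Thus $I=I(\abar')$ and $I_1(\abar')$ is \Kummergen, with no rigidity statement over any larger field needed. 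The Thumbtack Lemma is the right tool for the complementary statement, Corollary~\ref{fg pE extensions are fp}, where finite presentation is the \emph{conclusion} and one works over an LA-field; here it is the hypothesis, and the argument must stay over $F$.
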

\begin{proof}
 Let $g_1,\ldots,g_r \in I(\abar)$, together with the coherence polynomials, be a generating set for $I(\abar)$. Let $N \in \N$ be the least common multiple of the $m$ such that some variable $Y_{m,i}$ occurs in some $g_j$. Then $I(\abar)$ is determined by $I_N(\abar)$. Take $\abar' = \abar/N$, so $I_1(\abar') = I_N(\abar)$. Then $I_1(\abar')$ is \Kummergen, as required.
\end{proof}

\begin{example}
Take an extension of an EA-field $F$ generated by $a_1,a_2$, such
  that $e^{a_1/2} = a_2$, $e^{a_2} = a_1+1$. Then 
  \[I_1 = \gen{Y_{1,1} = X_2^2,\; Y_{1,2} = X_1 + 1}\]
  and 
  \[I_2 = \gen{Y_{2,1} = X_2,\; Y_{2,2}^2 = X_1 + 1.}\]

 In this case, $I_1$ is not \Kummergen\ because it does not resolve
  whether $e^{a_1/2} = \pm a_2$.
% \Note{However, I think $I_2$ is \Kummergen.}
\end{example}

There are finitely generated kernel-preserving extensions of some partial E-fields which are not finitely presented. However, another version of the thumbtack lemma gives conditions when this pathology does not occur.
\begin{fact}[Thumbtack Lemma, version 3]
  Let $F = K(a_1,\ldots,a_r,\sqrt{b_1},\ldots,\sqrt{b_r})$, where $K$ is an algebraically closed field of characteristic zero. Suppose that $c_1,\ldots,c_n$ lie in some field extension of $F$ and are multiplicatively independent from $K^\cross \cup \{b_1,\ldots,b_r\}$. Then there is $N \in \N$ and $N\th$ roots $c'_i$ of $c_i$ such that there is exactly one isomorphism type over $F$ of an $n$-tuple of coherent systems of roots of the $(c_i')$.
\end{fact}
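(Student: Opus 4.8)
The plan is to deduce this from the general Thumbtack Lemma of Bays and Zilber, \cite[Theorem~2.3]{BZ11}. First one should unwind the hypothesis: ``$c_1,\ldots,c_n$ multiplicatively independent from $K^\cross \cup \{b_1,\ldots,b_r\}$'' is to be read as saying that, inside a large field $\Omega$ containing everything, the images of $c_1,\ldots,c_n$ in the quotient group $\Omega^\cross/(K^\cross\cdot\langle b_1,\ldots,b_r\rangle)$ are $\Z$-linearly independent --- equivalently, the only multiplicative relation $\prod_i c_i^{k_i}= z\prod_j b_j^{\ell_j}$ with $z\in K^\cross$ has all $k_i=0$. This is exactly the ``free'' condition that makes the Kummer theory over $F$ tame. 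Since $F = K(a_1,\ldots,a_r,\sqrt{b_1},\ldots,\sqrt{b_r})$ is of the shape required by \cite[Theorem~2.3]{BZ11} --- an algebraically closed field with finitely many generators adjoined, together with all the roots of some of them --- one reads off $N$ and the $N\th$ roots $c_i'$ directly, the ``exactly one isomorphism type'' clause being the conclusion of that theorem once one passes (as the replacement of $c_i$ by $c_i'$ records) to a suitable finite-index subgroup.

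One can also prove it by induction on $n$, staying within the methods of this section, with version~2 of the Thumbtack Lemma as the base case $n=1$. For the inductive step I would apply version~2 to $c_1$ to obtain $N_1$ and an $N_1\th$ root $c_1'$ admitting a unique isomorphism type of coherent systems of roots over $F$, fix such a system, and set $F^+ = F(\sqrt{c_1'})$. Then $F^+ = K(a_1,\ldots,a_r,c_1',\sqrt{b_1},\ldots,\sqrt{b_r},\sqrt{c_1'})$ is again of the required shape, and $c_2,\ldots,c_n$ are still multiplicatively independent from $K^\cross\cdot\langle b_1,\ldots,b_r,c_1'\rangle$: any relation to the contrary, raised to the $N_1\th$ power, becomes a relation witnessing multiplicative dependence of $c_1,\ldots,c_n$ over $K^\cross\cdot\langle b_1,\ldots,b_r\rangle$. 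So the inductive hypothesis applies over $F^+$, yielding $N_2,\ldots,N_n$ and roots $c_2',\ldots,c_n'$ with a unique isomorphism type of $(n-1)$-tuples of coherent systems over $F^+$, and one then assembles a unique isomorphism type of $n$-tuples over $F$ by matching the $c_1'$-coordinate via version~2 and the remaining coordinates via the inductive hypothesis.

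The step I expect to be the real obstacle is precisely this assembly. An isomorphism that matches up two coherent systems of roots of $c_1'$ is only guaranteed to fix $F$ pointwise; there is no reason for it to fix $c_2,\ldots,c_n$ or the roots $c_i'$ chosen afterwards, which lie in a field extension of $F$ and may be algebraically entangled with the roots of $c_1'$, so one cannot simply extend such an isomorphism and invoke the inductive hypothesis. The fix is to run the two sub-results over suitably enlarged base fields --- $F(c_2,\ldots,c_n)$ when peeling off $c_1$, and the field generated over $F$ by a coherent system of roots of $c_1'$ together with $c_2',\ldots,c_n'$ when applying the inductive hypothesis --- each of which is still ``an algebraically closed field with finitely many generators adjoined together with all the roots of some of them'', so that the comparison isomorphisms can be arranged to fix every $c_i$; and to enlarge all the exponents $N_i$ to a common multiple so the various invocations are mutually consistent. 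This bookkeeping is routine but tedious, which is why in practice it is cleanest to quote \cite[Theorem~2.3]{BZ11} directly.
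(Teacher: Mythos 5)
Your primary route --- reading the statement off as a special case of Bays--Zilber \cite[Theorem~2.3]{BZ11} --- is exactly what the paper does: version~3 is stated as a Fact with no independent proof, the paper noting only that all three versions are special cases of \cite[Theorem~2]{Zilber06covers} together with \cite[Theorem~2.3]{BZ11}. Your alternative induction from version~2 goes beyond what the paper attempts, and you have correctly located its delicate point (the comparison isomorphisms fix only the base, not the later $c_i$ and their chosen roots), so deferring to the cited theorem is indeed the cleanest course.
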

As an immediate corollary, we have:
\begin{cor}\label{fg pE extensions are fp}
  If $F$ is an LA-field, $F_1$ is a finitely generated partial E-field
  extension of $F$, and $F_2$ is a finitely generated partial E-field
  extension of $F_1$, which does not extend the kernel, then $F_2$ is
  a finitely presented extension of $F_1$. In particular, every
  finitely generated kernel-preserving partial E-field extension of an
  ELA-field is finitely presented. \qed
\end{cor}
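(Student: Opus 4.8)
The strategy is to produce a generating set of $F_2$ over $F_1$ whose defining ideal is finitely generated over the coherence polynomials, by putting the extension into the shape required by version~3 of the Thumbtack Lemma. First I would record the pure-field shape of $F_1$. Since $F$ is an LA-field it is algebraically closed, and being an L-field every nonzero element of $F$ has a logarithm in $D(F)$, so $I(F) = F^\times$ and in particular $F$ contains all roots of unity. Choose $\alpha_1,\dots,\alpha_s \in D(F_1)$ generating $F_1$ over $F$ with $\bar\alpha$ $\Q$-linearly independent over $D(F)$ (possible since the fields are \egg), and set $\beta_j = \exp(\alpha_j)$. The sequences $(\exp(\alpha_j/m))_m$ are coherent systems of roots of $\beta_j$ which, together with the roots of unity in $F$, generate all roots of $\beta_j$; hence as a pure field $F_1 = F(\bar\alpha,\sqrt{\beta_1},\dots,\sqrt{\beta_s})$, of the form $K(\bar a,\sqrt{\bar b})$ with $K=F$ algebraically closed of characteristic zero. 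Having then chosen generators $a_1,\dots,a_n \in D(F_2)$ of $F_2$ over $F_1$ which are $\Q$-linearly independent over $D(F_1)$, the field $F_1^+ := F_1(\bar a,\exp(\bar a))$ is again of this form, with $K=F$ and the same elements $\beta_j$ under the radical signs. Write $c_i = \exp(a_i)$.

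The crucial point is that $c_1,\dots,c_n$ are multiplicatively independent from $F^\times \cup \{\beta_1,\dots,\beta_s\}$. Indeed, a relation $\prod_i c_i^{r_i} = w\prod_j \beta_j^{t_j}$ with $w \in F^\times$ and not all $r_i$ zero, writing $w = \exp(u)$ with $u \in D(F)$, puts $\sum_i r_i a_i - \sum_j t_j \alpha_j - u$ in $\ker\exp_{F_2}$; but $\ker\exp_{F_2} = \ker\exp_{F_1} \subs D(F_1)$ because $F_1 \subs F_2$ is kernel-preserving, and $u,\alpha_j \in D(F_1)$, so $\sum_i r_i a_i \in D(F_1)$, contradicting the $\Q$-linear independence of $\bar a$ over $D(F_1)$. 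This is the only place where the hypotheses of the corollary are genuinely used.

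Now apply version~3 of the Thumbtack Lemma over the base field $F_1^+$, with $K=F$, the ``$b$''s taken to be $\beta_1,\dots,\beta_s$, and the ``$c$''s our $c_1,\dots,c_n$: the independence just verified is exactly the hypothesis, so there are $N \in \N$ and $N\th$ roots $c_i'$ of $c_i$ with a unique isomorphism type over $F_1^+$ of an $n$-tuple of coherent systems of roots of the $c_i'$. Replace the generating set of $F_2$ over $F_1$ by $a_i' := a_i/N$; these still generate $F_2$ over $F_1$ and are still $\Q$-linearly independent over $D(F_1)$, and $\exp(a_i') = \exp(a_i/N)$ is an $N\th$ root of $c_i$, hence equals $c_i'$ up to a root of unity lying in $F$. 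Twisting coherent systems of roots of $c_i'$ by a coherent system of roots of that root of unity in $F$ (such a system exists because $F$ is algebraically closed) gives an $F_1^+$-automorphism-equivariant bijection between coherent systems of roots of $c_i'$ and of $\exp(a_i')$; hence there is a unique isomorphism type over $F_1^+$ — equivalently, since $\exp(a_i')$ is the initial term of such a system, over $F_1^+(\exp(a_i'):i) = F_1(\bar a',\exp(\bar a'))$ — of an $n$-tuple of coherent systems of roots of $(\exp(a_i'))_i$, which is precisely the statement that the ideal $I_1(\abar')$ is \Kummergen. Since $I_1(\abar')$ is finitely generated (its polynomial ring is Noetherian), running the argument of Lemma~\ref{rootdet lemma} in reverse — a finite generating set of $I_1(\abar')$ together with the coherence polynomials generates $I(\abar')$ — shows that $F_2$ is a finitely presented extension of $F_1$. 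The final ``in particular'' is the case $F_1 = F$, since an ELA-field is an LA-field and is trivially finitely generated over itself.

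I expect no serious obstacle beyond the multiplicative-independence computation; the one subtlety worth attention is the passage from the Thumbtack Lemma's conclusion to \Kummergen ness of $I_1(\abar')$, which is why the lemma is applied over $F_1^+$ rather than over $F_1$: because a coherent system of roots of $c_i'$ has $c_i'$ — hence, after the root-of-unity twist, $\exp(a_i')$ — as its initial term, ``unique isomorphism type over $F_1^+$'' already means ``over $F_1^+(\exp(a_i'):i)$'', which is exactly $F_1(\bar a',\exp(\bar a'))$, the field over which \Kummergen ness is phrased.
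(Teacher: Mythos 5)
Your proposal is correct and follows exactly the route the paper intends: the corollary is stated as an immediate consequence of version~3 of the Thumbtack Lemma, applied over the base $F(\bar\alpha,\bar a,\exp(\bar a),\sqrt{\bar\beta})$ with $K=F$, the multiplicative independence hypothesis coming from kernel preservation, the $\Q$-linear independence of the generators over $D(F_1)$, and surjectivity of $\exp_F$ onto $F^\times$, followed by rescaling the generators by $N$. The only place you are briefer than you might be is the final passage from uniqueness of the tower of roots (\Kummergen ness of $I_1(\abar')$) to the statement that a finite generating set of $I_1(\abar')$ plus the coherence polynomials generates $I(\abar')$ --- this is true (e.g.\ by a Kummer-theoretic degree count at each finite level) but is not literally Lemma~\ref{rootdet lemma} run backwards; the paper treats it as immediate, so this is a matter of emphasis rather than a gap.
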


Our main interest is not with partial E-fields, but with ELA-fields. 
\begin{defn}
  A finitely generated extension $F \subs F_1$ of countable ELA-fields is said to be \emph{finitely presented} iff there is a finite generating set $\abar$ such that, taking $K=\gen{F,\abar}_{F_1}$, the partial E-field extension of $F$ generated by $\abar$, we have $F_1 \iso K^{ELA}$.
\end{defn}
Note that $K^{ELA}$ is well-defined by Proposition~\ref{ELA-well-defined}. From Construction~\ref{E-field construction} it is clear that most finitely generated extensions of ELA-fields are not finitely presented. Indeed there are only countably many finitely presented extensions of a given countable ELA-field, but $2^{\aleph_0}$ finitely generated extensions.

We introduce a notation for finitely presented extensions. Since these are given by \Kummergen\ ideals $I_1$, which are ideals in a polynomial ring with finitely many indeterminates, we can consider instead their associated varieties as subvarieties of $(\ga \cross \gm)^n$.
\begin{defn}
  Let $F$ be an ELA-field. An irreducible subvariety $V$ of $(\ga \cross \gm)^n$ defined over $F$ is said to be \emph{additively free, multiplicatively free, and \Kummergen} iff the corresponding ideal $I(V)$ is. 
\end{defn}
Suppose that $V$ satisfies all three conditions. Then there is a uniquely determined partial E-field extension $K$ of $F$ which is generated by a tuple $(\abar,e^{\abar})$ which is generic in $V$ over $F$. We write $F|V$, read ``$F$ extended by $V$'', for the ELA-extension $K^{ELA}$ of $F$.

\begin{theorem}\label{aleph0-stability}
  Let $F \strong F_1$ be a finitely generated kernel-preserving strong extension of ELA-fields. Then $F_1$ is a finitely presented extension of $F$.
\end{theorem}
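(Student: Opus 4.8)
The plan is to locate, inside $F_1$, a finitely generated partial E-subfield $K$ which contains a set of ELA-generators of $F_1$ over $F$ and which satisfies $K \strong F_1$; then Theorem~\ref{ELA-well-defined} will force $F_1 \iso K^{ELA}$, and that is exactly the statement that the extension is finitely presented.

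First I would pick a finite tuple $\bbar$ from $F_1$ with $F_1 = \gen{F\cup\bbar}_{F_1}^{ELA}$, which exists since $F_1$ is finitely generated over $F$ as an ELA-field. The crucial step is then a self-sufficient-closure argument. Since $F \strong F_1$, every finite tuple $\zbar$ from $D(F_1) = F_1$ that extends $\bbar$ has $\delta(\zbar/D(F)) \ge 0$, and these predimensions are non-negative integers, so they attain a minimum at some tuple $\cbar \supseteq \bbar$. Set $K = \gen{F\cup\cbar}_{F_1}$, so that $D(K) = D(F) + \langle\cbar\rangle_{\Q}$ because $F$ is total. For an arbitrary finite tuple $\ybar$ from $D(F_1)$, the Addition Property (Lemma~\ref{strong lemma}(1), applied inside $F_1$) gives
\[\delta(\ybar/D(K)) = \delta(\ybar\cup\cbar/D(F)) - \delta(\cbar/D(F)) \ge 0,\]
the inequality holding because $\ybar\cup\cbar$ still extends $\bbar$, so $\delta(\ybar\cup\cbar/D(F)) \ge \delta(\cbar/D(F))$ by minimality. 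Hence $K \strong F_1$.

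It then remains to run the bookkeeping for Theorem~\ref{ELA-well-defined}, applied with $K$ in the role of the base field: $K$ is a partial E-field with full kernel (inherited from the ELA-field $F$ since the extension is kernel-preserving) and is a finitely generated extension, by $\cbar$, of the countable LA-field $F$; and $K \strong F_1$ is a kernel-preserving strong extension to an ELA-field, since $F \strong F_1$ is. So $K^{ELA}$ is well-defined and $K \strong F_1$ factors as $K \strong K^{ELA} \strong F_1$; in particular $K^{ELA}$ is, up to isomorphism over $K$, an ELA-subfield of $F_1$ containing $K \supseteq F\cup\bbar$, hence containing $\gen{F\cup\bbar}_{F_1}^{ELA} = F_1$. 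Thus $K^{ELA} = F_1$, and taking $\abar = \cbar$ as the generating set shows that $F_1$ is a finitely presented extension of $F$. (If $F$ is uncountable one first observes, via Corollary~\ref{fg pE extensions are fp}, that $K$ is already a finitely presented partial E-field extension of $F$, hence defined over a countable LA-subfield, and works there.)

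I expect the self-sufficient-closure step --- producing $\cbar$ with $\gen{F\cup\cbar}_{F_1} \strong F_1$ --- to be the only real content; it is where strongness of $F \strong F_1$ and the integrality of $\delta$ are used, and it is precisely the standard Hrushovski fact that over a strong base a finite set has a well-defined finitely generated self-sufficient closure. Everything else is checking the hypotheses of Theorem~\ref{ELA-well-defined} and unwinding the definition of a finitely presented extension.
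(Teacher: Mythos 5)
Your proof is correct and follows essentially the same route as the paper's: extend the ELA-generating tuple to one generating a partial E-subfield $K$ that is strong in $F_1$ (the paper asserts this in one line; your predimension-minimisation is the standard justification), then apply Theorem~\ref{ELA-well-defined} to conclude $F_1 \iso K^{ELA}$. The only cosmetic difference is that the paper also cites Corollary~\ref{fg pE extensions are fp} to record that $F \subs K$ is finitely presented as a partial E-field extension, a point not strictly required by the paper's definition of a finitely presented ELA-extension (which, note, is stated for countable ELA-fields, so your parenthetical about uncountable $F$ is beside the point).
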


\begin{proof}
  Let $\abar$ be a finite set of generators of $F_1$ over $F$. By extending $\abar$ if necessary, we may assume that $\gen{F,\abar}_{F_1} \strong F_1$. By Corollary~\ref{fg pE extensions are fp}, the extension $F \subs \gen{F,\abar}_{F_1}$ is a finitely presented extension of partial E-fields. By Theorem~\ref{ELA-well-defined}, $F_1 \iso (\gen{F,\abar}_{F_1})^{ELA}$, so is a finitely presented ELA-extension of $F$.
\end{proof}

Note that there are finitely generated strong extensions of partial E-fields, of E-fields, of EA-fields, and of EL-fields which are not finitely presented, due to the issue of uniqueness of coherent systems of roots. This is the main reason why we work with ELA-fields. It is also important that the kernel does not extend, as if $a$ is a new kernel element then the values of $\exp(a/m)$ for $m \in \N^+$ cannot all be specified by a finite list of equations.

\section{Finitely Presented ELA-fields}

So far we have defined finitely presented \emph{extensions} of ELA-fields, but it is natural also to ask about finitely presented ELA-fields. The useful convention is as follows.
\begin{defn}\label{fp defn}
An ELA-field $F$ is said to be \emph{finitely presented} iff there is a finitely generated partial E-field $F_0$ (with full kernel) such that $F = F_0^{ELA}$.
\end{defn}
Note that a finitely presented ELA-extension of a finitely presented ELA-field is still finitely presented, since if $F = F_0^{ELA}$, 
$V \subs (\ga \cross \gm)^n$ is defined over $F$, additively free, multiplicatively free, and \Kummergen, and $(\abar,e^\abar) \in V$ generates the extension $F \subs F|V$ and $F_1 =  \gen{F_0 \cup \abar}_{F|V}$ then $F|V \iso F_1^{ELA}$.

The definition is just a convention since there is no way to specify any partial E-field with full kernel within the category of partial E-fields, just by finitely many equations between a given set of generators. Within the subcategory of partial E-fields with full kernel, one might view the $CK_\tau$ as finitely presented, with explicit finite presentations
\[\exp(\tau/2) = -1, \qquad f(\tau)=0\]
where $f$ is the minimal polynomial of the cyclic generator $\tau$. However it does not follow that $\exp(\tau/m)$ is a primitive $m\th$
root of 1 for each $m$ and this cannot be specified by finitely many polynomial equations, for example $\tau/p$ could be the cyclic generator for any odd prime $p$. On the other hand, within the category of partial E-fields with cyclic kernel and named generator $\tau$, the minimal polynomial of $\tau$ does indeed determine $CK_\tau$ precisely. So the matter of what constitutes a finite presentation is somewhat dependent on the axioms specifying the category, and the convention in Definition~\ref{fp defn} is the useful one for the purposes of this paper.

\section{Classification of strong extensions}

It follows from Theorem~\ref{aleph0-stability} that finitely generated kernel-preserving strong extensions of ELA-fields are all of the form $F \strong F|V$ where $V$ is additively and multiplicatively free, and \Kummergen. We next discuss the properties of the varieties $V$ which occur in this way.

Let $G = \ga \cross \gm$. Each matrix $M \in \Mat_{n \cross n}(\Z)$ defines a homomorphism
$\ra{G^n}{M}{G^n}$ by acting as a linear map on $\ga^n$ and as a
multiplicative map on $\gm^n$. If $V \subs G^n$, we write $M\cdot V$
for its image. Note that if $V$ is a subvariety of $G^n$, then so is $M\cdot
V$.
\begin{defn}
  An irreducible subvariety $V$ of $G^n$ is \emph{rotund} iff for
  every matrix $M \in \Mat_{n \cross n}(\Z)$
  \[\dim M\cdot V \ge \rk M.\]
  A reducible subvariety is rotund iff at least one of its
  irreducible components is rotund.

  A subvariety $V$ of $G^n$ is \emph{perfectly rotund} iff it is
  irreducible, $\dim V = n$, for every $M \in \Mat_{n \cross
    n}(\Z)$ with $ 0 < \rk M < n$,
  \[\dim M\cdot V \ge \rk M + 1\]
and also $V$ is additively free, multiplicatively free, and \Kummergen.
\end{defn}
Note that a reducible subvariety may satisfy the dimension property of
rotundity without being rotund. For example, take $n=2$, $V_1$ given by $x_1
= y_1 = 1$, $V_2$ given by $x_2 = y_2 = 1$, and $V = V_1 \cup V_2$.

\begin{prop}\label{rotund prop}
  Let $F \subs F|V$ be an extension of ELA-fields, with $V$ additively and multiplicatively free, and \Kummergen. Then the extension is strong iff $V$ is rotund.
\end{prop}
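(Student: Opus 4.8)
The plan is to relate the predimension inequality defining strongness directly to the dimension inequalities defining rotundity, via the homomorphisms $M\cdot(-)$ for $M \in \Mat_{n\cross n}(\Z)$. Fix generators so that $(\abar,e^\abar)$ is generic in $V$ over $F$, with $\abar$ $\Q$-linearly independent over $D(F)$ (possible since $V$ is additively free). Write $K = \gen{F,\abar}_{F_1}$, so $F_1 = F|V = K^{ELA}$. By Lemma~\ref{strong lemma}(2) and (5), $F \strong F_1$ is equivalent to $F \strong K$, i.e. to $\delta(\bbar/D(F)) \ge 0$ for every finite tuple $\bbar$ from $D(K)$; and since $D(K)$ is the $\Q$-span of $D(F)$ together with $a_1,\dots,a_n$, the Addition Property reduces this to checking $\delta(\ybar/D(F)) \ge 0$ where $\ybar$ ranges over $\Q$-linear combinations of the $a_i$ — equivalently, over tuples $M\abar$ for $M \in \Mat_{n\cross n}(\Z)$ (a tuple of $k$ independent linear combinations being captured by a matrix of rank $k$, padded with zero rows). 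For such a tuple, $\ldim_\Q(M\abar/D(F)) = \rk M$ (using additive freeness of $V$, so there are no unexpected additive dependencies over $D(F)$), while $\td(M\abar, e^{M\abar}/D(F),e^{D(F)}) = \td(M\abar,(M\abar)\text{-exponentials}/F) = \dim M\cdot V$, because $(M\abar,e^{M\abar})$ is a generic point of $M\cdot V$ over $F$. Hence $\delta(M\abar/D(F)) = \dim M\cdot V - \rk M$, and $F \strong F_1$ holds iff $\dim M\cdot V \ge \rk M$ for all $M$, which is exactly rotundity (here $V$ is irreducible, so the ``reducible'' clause is vacuous).

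To make this rigorous I would proceed in the following steps. First, record that strongness of $F\subs F_1$ reduces to $\delta(\ybar/D(F))\ge 0$ for $\ybar$ a tuple of $\Z$-linear combinations of $\abar$: any tuple from $D(K)$ lies in the $\Q$-span of $D(F)\cup\{a_1,\dots,a_n\}$, clearing denominators (which only rescales, not changing $\delta$ by divisibility of the ambient spaces) and using the Addition Property to discard the $D(F)$-part and any $\Q$-linearly dependent coordinates. Second, for $M \in \Mat_{n\cross n}(\Z)$ compute the two terms of $\delta(M\abar/D(F))$: the $\ldim_\Q$ term equals $\rk M$ precisely because $I(\abar)$ is additively free (no relation $\sum r_i a_i \in D(F)$ with $\bar r \ne 0$), so the only $\Q$-linear relations among the rows of $M\abar$ over $D(F)$ are the obvious ones; and the transcendence-degree term equals $\dim M\cdot V$ because applying the monomial/linear map $M$ to the generic point of $V$ yields the generic point of the (irreducible) image variety $M\cdot V$, whose dimension is by definition the transcendence degree of its function field. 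Third, observe it suffices to let $M$ range over matrices whose rows are a $\Z$-basis of a pure (saturated) subgroup of $\Z^n$ — but one does not even need this refinement, since for any $M$ the quantity $\dim M\cdot V - \rk M$ depends only on the row span, and rotundity as defined already quantifies over all $M$. Conclude: $\delta \ge 0$ on all such tuples $\iff \dim M\cdot V \ge \rk M$ for all $M \iff V$ rotund.

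The main obstacle is the transcendence-degree bookkeeping in the second step, specifically the identification $\td(M\abar,e^{M\abar}/F) = \dim M\cdot V$ together with the reduction from ``generic point of $M\cdot V$'' over $F$ to the generic point over $D(F), e^{D(F)}$ in the definition of $\delta$. One must check that passing from $F$ to the subfield generated by $D(F)\cup e^{D(F)}$ does not change the transcendence degree — which holds because $F$, being an \egg\ partial E-field, is algebraic over $D(F)\cup I(F)$, so these two fields have the same algebraic closure and hence the same relative transcendence degrees. A secondary subtlety is handling tuples $\ybar$ from $D(K)$ that mix $D(F)$-elements with combinations of the $a_i$: here the Addition Property $\delta(\ybar\cup\zbar/B)=\delta(\zbar/B)+\delta(\ybar/\zbar\cup B)$ with $\zbar$ the $D(F)$-part, plus $\delta(\zbar/D(F))=0$ for $\zbar$ from $D(F)$, cleanly isolates the contribution of the $a_i$-part; I would spell this out but not belabor it. Everything else — additive and multiplicative freeness ensuring the ``expected'' dimension counts, \Kummergen-ness being irrelevant to strongness (it only pins down the finite presentation, not $\delta$) — is routine.
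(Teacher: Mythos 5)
Your argument is essentially the paper's own proof: one computes $\delta(M\abar/F)=\dim M\cdot V-\rk M$ using that $(M\abar,e^{M\abar})$ is generic in $M\cdot V$ and that additive freeness gives $\ldim_\Q(M\abar/F)=\rk M$, reduces an arbitrary tuple from $D(\gen{F,\abar}_{F|V})$ to some $M\abar$ by matching $\Q$-spans, and then passes from $F\strong\gen{F,\abar}_{F|V}$ to $F\strong F|V$ via $F|V=(\gen{F,\abar}_{F|V})^{ELA}$. The only slip is the citation for that last equivalence: the direction $F\strong K\Rightarrow F\strong F|V$ follows from Lemma~\ref{F strong in F^ELA} together with transitivity (Lemma~\ref{strong lemma}(4)), not from Lemma~\ref{strong lemma}(5), which only gives the trivial converse direction.
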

\begin{proof}
  Let $\abar$ be the tuple generating $F|V$ over $F$ such that $(\abar,e^\abar) \in V$.
  Suppose $F \strong F|V$, let $M \in \Mat_{n\cross n}(\Z)$, and let
  $\bar{b} = M\bar{a}$. Then $\loc_F(\bar{b},e^{\bar{b}}) = M\cdot
  V$ and $\ldim_\Q(\bar{b}) = \rk M$, so 
  \[\dim M \cdot V - \rk M = \delta(\bar{b}/F) \ge 0\]
  and hence $V$ is rotund.

  Conversely, suppose that $V$ is rotund, let $F_1 = \gen{F,\abar}_{F|V}$, the partial E-field extension of $F$ generated by $\abar$, and let $\bar{b}$ be any
  tuple from $D(F_1)$. The tuple $\bar{a}$ spans $D(F_1)/F$, so
  there is $M \in \Mat_{n\cross n}(\Z)$ such that there is an equality of $\Q$-vector spaces $\gen{M\bar{a}}/F
  = \gen{\bar{b}}/F$. Then 
  \[\delta(\bar{b}/F) = \delta(M\bar{a}/F) = \dim M\cdot V - \rk M
  \ge 0\] so $F \strong F_1$. But $F|V = F_1^{ELA}$, so $F \strong F|V$ as required.
\end{proof}

\begin{defn}
  A strong extension $F \strong F_1$ of ELA-fields is \emph{simple} iff
  whenever $F \strong F_2 \strong F_1$ is an intermediate ELA-field then $F_2 = F$ or $F_2 = F_1$.
\end{defn}
It is easy to see that every simple extension of ELA-fields is finitely generated. For, suppose $\abar$ is a non-empty finite tuple from $F_1 \minus F$. Then there is a finite tuple $\abar'$, extending $\abar$, such that $\gen{F,\abar'}_{F_1} \strong F_1$. Then $F \strong F_2 \leteq \gen{F,\abar'}^{ELA}_{F_1}$ and $F_2 \strong F_1$, so by simplicity $F_2 = F_1$ and the extension is finitely generated. However, simple extensions are not necessarily generated by a single element.

It is important to distinguish between exponentially algebraic and exponentially transcendental extensions. The full definition of exponential algebraicity is given in \cite{EAEF}, but all we will use is the following fact:
\begin{fact}[{\cite[Theorem~1.3]{EAEF}}]\label{etd fact}
  Let $F$ be an E-field and suppose $C \strong F$ is some strong subset, and $\abar$ is a finite tuple from $F$. Then the exponential transcendence degree of $\abar$ over $C$ in $F$ satisfies \[\etd^F(\abar/C) = \min\class{\delta(\abar,\bbar/C)}{\bbar \mbox{ is a finite tuple from } F}.\]
\end{fact}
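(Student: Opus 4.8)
The plan is to prove that the function
\[
d(\abar/C) \leteq \min\class{\delta(\abar\bbar/C)}{\bbar \text{ a finite tuple from } F}
\]
is exactly the rank function of the pregeometry on $F$ given by exponential-algebraic closure over $C$; since $\etd^F(\abar/C)$ is, by definition, the rank of $\abar$ in that pregeometry, the identity drops out. Note first that $d$ is well defined: because $C \strong F$ we have $\delta(\xbar/C) \ge 0$ for every finite tuple $\xbar$ from $F = D(F)$, so $\class{\delta(\abar\bbar/C)}{\bbar}$ is a non-empty set of natural numbers (it contains $\delta(\abar/C)$) with a least element.

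I would then record the formal properties of $d$, all immediate from Lemma~\ref{strong lemma} and the strength of $C$. It is \emph{monotone}, $d(\abar\ybar/C) \ge d(\ybar/C)$, since a minimizing internal tuple $\cbar$ for $d(\abar\ybar/C)$ makes $\delta\bigl(\ybar(\abar\cbar)/C\bigr)$ one of the quantities competing in $d(\ybar/C)$; it is \emph{submodular}, $d(\abar\ybar/C) \le d(\abar/C)+d(\ybar/C)$, by submodularity of $\td$, modularity of $\ldim_\Q$ and the inequality $\delta \ge 0$; and $d(a/C) \le 1$ for a single element $a$, directly from the definition of $\delta$ on casing whether $a$ lies in the $\Q$-span of $C$. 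Setting $d(\abar/\ybar C) \leteq d(\abar\ybar/C) - d(\ybar/C)$ --- which is then $\ge 0$, is $\le d(\abar/C)$, and is additive by fiat --- one derives the exchange property in the usual way from additivity together with $d(a/-) \le 1$. Hence $\cl_C(Y) \leteq \class{a \in F}{d(a/\ybar C)=0 \text{ for some finite } \ybar \subs Y}$ is a finitary pregeometry closure operator on $F$ whose rank function is $d(-/C)$.

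The crux is to identify $\cl_C$ with $\ecl(-\cup C)$, the exponential-algebraic closure taken inside $F$; this equivalence is the substance of \cite{EAEF} and I would only sketch it. For $\ecl(B) \subs \cl_C(B)$ (with $B \sups C$): if $a$ is algebraic over $B$ then $\delta(a/B) \le 0$ by inspection of $\delta$; otherwise the definition of exponential algebraicity yields a finite tuple $\bbar$ and polynomial relations over $B$ among $a,\bbar,e^a,e^\bbar$ whose ``exponential Jacobian'' in the $\bbar$-coordinates is nonsingular at the point, and it is precisely this nonsingularity that forces the locus $\loc(a\bbar,e^{a\bbar}/B)$ to have small enough dimension that $\delta(a\bbar/B) \le 0$; either way $d(a/B)=0$. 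For the reverse inclusion --- the hard direction --- take $\bbar$ with $\delta(a\bbar/B)=0$ and $a\bbar$ $\Q$-linearly independent over $B$, form $V = \loc(a\bbar,e^{a\bbar}/B)$ with coordinate functions $(x_i,z_i)_{i<m}$ (so $z_i$ plays the role of $e^{x_i}$), and consider the module of exponential differentials, $\Omega_{F(V)/B}$ modulo the relations $\mathrm{d}z_i = z_i\,\mathrm{d}x_i$. These relations are independent precisely because $a\bbar$ is $\Q$-linearly independent over $B$, so the quotient has $F(V)$-dimension $\td(a\bbar,e^{a\bbar}/B) - \ldim_\Q(a\bbar/B) = \delta(a\bbar/B) = 0$; triviality of this module forces $\mathrm{d}a = 0$ in it, which is exactly the Jacobian statement that $a \in \ecl(B)$.

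With $\cl_C = \ecl(-\cup C)$ established, $\etd^F(\abar/C)$ is the rank of $\abar$ in the $\ecl$-pregeometry over $C$, hence equals the rank of $\abar$ in the $d(-/C)$-pregeometry, namely $d(\abar/C) = \min\class{\delta(\abar\bbar/C)}{\bbar \text{ finite from } F}$; the restriction to finite $\bbar$ is just the finite character of $\td$ and $\ldim_\Q$. The main obstacle is this third step: the differential-algebraic bookkeeping translating the predimension deficiency $\delta(a\bbar/B)=0$ into exponential algebraicity of $a$, together with the routine-but-necessary reduction of a general base $B = \ybar C$ to the strong base $C$ via self-sufficient closures (so that ``$\delta(a\bbar/B) \le 0$ for some $\bbar$'' genuinely encodes ``$d(a/B)=0$''); this is the technical heart of \cite{EAEF}.
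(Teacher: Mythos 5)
This statement is not proved in the paper at all: it is imported as a Fact, quoted from \cite[Theorem~1.3]{EAEF}, and used as a black box. So there is no internal proof to compare against, and your proposal has to be judged as a reconstruction of the cited theorem. As such, the architecture is right and matches the external source: the function $d(\abar/C)=\min\class{\delta(\abar\bbar/C)}{\bbar \subs F \text{ finite}}$ is indeed shown to be the rank function of a pregeometry by the standard submodularity-plus-exchange argument, and the proof in \cite{EAEF} does identify its closure with exponential-algebraic closure by a differential/derivation calculus; the module you write down, with the relations $\mathrm{d}z_i = z_i\,\mathrm{d}x_i$, is precisely dual to the space of exponential derivations used there.

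However, as a proof your text has a genuine gap exactly where you flag it: the identification $\cl_C = \ecl(-\cup C)$ is asserted with only a sketch, and that identification \emph{is} the content of the cited theorem. Concretely, (i) the forward direction needs the fibre-dimension argument showing that a nonsingular Khovanskii system over $B$ forces $\delta(a\bbar/B)\le 0$, which you state but do not carry out; (ii) more seriously, the step ``triviality of the module forces $\mathrm{d}a=0$, which is exactly the Jacobian statement that $a \in \ecl(B)$'' conflates the derivation-theoretic closure with the definition of exponential algebraicity via nonsingular systems of exponential polynomials --- proving these coincide (equivalently, extracting a witnessing Khovanskii system from the vanishing of differentials, or producing an exponential derivation separating $a$ from $B$ when no such system exists) is the technical heart of \cite{EAEF} and is not supplied; and (iii) the reduction from an arbitrary base $\ybar C$ to the strong base $C$ via hulls, needed so that ``$\delta(a\bbar/B)\le 0$ for some $\bbar$'' really computes $d(a/\ybar C)$, is named but not done. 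In short: correct strategy, faithful to the source, but the core equivalence is cited rather than proved, so the argument is an outline of \cite{EAEF} rather than an independent proof.
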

Exponential transcendence degree is the dimension notion of a pregeometry, analogous to transcendence degree in pure fields. An element $a$ is exponentially algebraic over $C$ iff $\etd^F(a/C) = 0$.

\begin{lemma}\label{unique generic extensions}
  There is a unique simple exponentially transcendental extension of any ELA-field.
\end{lemma}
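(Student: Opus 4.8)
The plan is to exhibit a single, canonical exponentially transcendental simple extension and then show any such extension is isomorphic to it. First I would construct one: take $F$ an ELA-field and let $V \subs \ga \cross \gm$ (so $n=1$) be the graph-free variety of dimension $1$ given by $\gm$ itself, i.e. the variety $\{(x,y) : x \text{ arbitrary}, y = 1\}$ is wrong since that is not exponentially transcendental --- rather I want the generic point $a$ with $e^a$ transcendental over $F(a)$, so $V$ should be all of $\ga \cross \gm$ if $n=1$? No: $\dim(\ga \cross \gm) = 2 = n+1$, so $\delta(a/F) = 2 - 1 = 1$, meaning $a$ is exponentially transcendental over $F$, and $V = \ga \cross \gm$ is plainly additively free, multiplicatively free, and \Kummergen\ (it is the whole ambient group). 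By Proposition~\ref{rotund prop} the extension $F \strong F|V$ is strong. I would then check it is simple and that $\etd(F|V / F) = 1$, so it is an exponentially transcendental extension, and moreover (using Fact~\ref{etd fact}) that no intermediate ELA-field strictly between $F$ and $F|V$ can exist, because any element $b$ of $F|V \minus F$ must have $\etd^{F|V}(b/F) = 1$ (it cannot be $0$ as that would make $b$ exponentially algebraic over $F$, hence in no proper-but-nontrivial sub-closure since $F$ is already ELA and the extension adds a single transcendental of exponential transcendence degree $1$), forcing $b$ to generate the whole thing; then $\gen{F,b}_{F|V}^{ELA} = F|V$.

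Next I would prove uniqueness. Suppose $F \strong F_1$ and $F \strong F_1'$ are two simple exponentially transcendental extensions. By the remark after Definition~\ref{unique generic extensions} (that simple extensions are finitely generated), each is finitely generated; by extending the generating tuple I may assume it is strongly embedded, and by Theorem~\ref{aleph0-stability} each is of the form $F | W$ for some additively free, multiplicatively free, \Kummergen, rotund variety $W$. The task is to show that simplicity plus exponential transcendence forces $W$ (up to the allowable coordinate changes by $\GL_n(\Z)$, i.e. up to $M \cdot W$) to be $\ga \cross \gm$ with $n = 1$. Pick a generator tuple $\abar$ of length $n$ with $(\abar, e^\abar)$ generic in $W$; since the extension is exponentially transcendental there is at least one element of exponential transcendence degree $1$ over $F$, and by Fact~\ref{etd fact} combined with rotundity I can analyse $\delta$ on subtuples. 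Using simplicity: any proper nonzero $\Z$-submodule of the span of $\abar$ gives an intermediate partial E-field whose ELA-closure is an intermediate ELA-field, which must be trivial or everything. This should pin down $n = 1$ (otherwise a rank-one sublattice gives a proper intermediate extension $F | (M \cdot W)$ with $\dim(M \cdot W) \ge \rk M = 1$, and rotundity of $M \cdot W$ lets it be a genuine ELA-subfield strictly between, unless that sub-extension collapses --- which forces relations contradicting $\dim W = n$ or additive freeness). With $n = 1$, rotundity plus $\dim W \le 2$ and the exponential-transcendence condition $\delta(a/F) = 1$ force $\dim W = 2$, i.e. $W = \ga \cross \gm$, giving $F | W \iso F | (\ga\cross\gm)$ over $F$.

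The main obstacle I expect is the uniqueness half, specifically the reduction to $n = 1$ and the bookkeeping with the matrix action $M \cdot V$ and rotundity. One has to be careful that an intermediate ELA-field arising from a sublattice really is a \emph{proper} subfield and really is \emph{strictly smaller} than $F_1$ --- i.e. that neither inclusion in $F \strong F_2 \strong F_1$ degenerates --- and this needs the freeness hypotheses on $W$ together with a dimension count via $\delta$. There is also a subtlety about whether ``exponentially transcendental'' means $\etd(F_1/F) \ge 1$ or $= 1$; simplicity should force it to be exactly $1$, since $\etd \ge 2$ would allow splitting off an exponentially transcendental element and taking its ELA-closure as a proper intermediate field, but verifying that this closure is genuinely intermediate (proper and nontrivial) again reduces to the same kind of $\delta$-computation. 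Once $n=1$ is established the rest is a short argument: the only rotund, additively free, multiplicatively free, \Kummergen\ subvariety of $\ga \cross \gm$ of dimension $2$ is the whole space, and $\dim < 2$ would make the extension exponentially algebraic, contradicting the hypothesis.
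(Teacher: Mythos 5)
Your uniqueness half contains the decisive gap. The reduction to $n=1$ cannot work as stated: a presentation of the extension supplied by Theorem~\ref{aleph0-stability} need not have $n=1$, and the coordinate changes $W \mapsto M\cdot W$ with $M \in \GL_n(\Z)$ preserve $n$, so they cannot repair this. Concretely, take the extension $F|G$ with generating element $a$: the pair $(a,e^a)$ also generates it, is $\Q$-linearly independent over $F$, and $\gen{F,a,e^a}_{F|G}$ is strong in $F|G$ (by Fact~\ref{etd fact}, since $\delta(a,e^a/F)=1=\etd(a,e^a/F)$); its locus $W \subs G^2$ is additively and multiplicatively free, rotund, of dimension $3$, and can be made \Kummergen\ by dividing the tuple as in Lemma~\ref{rootdet lemma}. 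For this $W$ your rank-one sublattice argument produces no contradiction with simplicity: the sublattice spanned by $e^a$ gives $\gen{F,e^a}^{ELA}_{F|G}=F|G$, because the kernel lies in $F$, so any logarithm of $e^a$ in that ELA-subfield differs from $a$ by an element of $F$; likewise the sublattice spanned by $a$ gives everything. So the ``collapse'' you hoped to exclude occurs without violating additive freeness or any dimension count (note $\dim W=n$ is the exponentially algebraic case; here $\dim W=n+1$). Two further points you never address: the definition of simple only quantifies over \emph{strong} intermediate ELA-subfields, so an intermediate field built from a sublattice only threatens simplicity if you verify it is strong; and choosing a generating tuple of minimal length does not rescue the reduction, since proving the minimum is $1$ amounts to proving that a single exponentially transcendental element generates, which is exactly where simplicity must be used.

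Your existence half has a similar gap: the claim that every $b\in F|G\minus F$ is exponentially transcendental over $F$ requires proof (an ELA-field need not be $\ecl$-closed in an extension), and even granting it, exponential transcendence of $b$ only yields $\gen{F,b}^{ELA}_{F|G}\iso F|G$ as an extension of $F$ (via Theorem~\ref{ELA-well-defined}), not that this subfield \emph{equals} $F|G$; proper subfields isomorphic to the whole field are precisely the phenomenon noted after Theorem~\ref{EA-closures}, so ``forcing $b$ to generate the whole thing'' is a non sequitur. The paper's proof avoids the classification machinery of \S5 entirely: given any simple extension $F\strong F_1$ containing an exponentially transcendental element $a$, Fact~\ref{etd fact} gives $\td(a,e^a/F)=2$ and $\gen{F,a}_{F_1}\strong F_1$, so this partial E-field extension is determined up to isomorphism; Theorem~\ref{ELA-well-defined} identifies $\gen{F,a}^{ELA}_{F_1}$ with the free ELA-closure $(\gen{F,a}_{F_1})^{ELA}$, and since this intermediate ELA-field is strong in $F_1$, simplicity forces it to be all of $F_1$. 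That is, simplicity is used to conclude that one exponentially transcendental element generates, rather than attempting to classify the presenting variety, and this is the step your proposal is missing.
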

\begin{proof}
  Let $F \strong F_1$ be simple, with $a \in F_1$, exponentially transcendental over $F$. Then $\td(a,e^a/F) = 2$, so the partial E-field extension $\gen{F,a}_{F_1}$ is determined uniquely up to isomorphism. But $\gen{F,a}_{F_1} \strong F_1$ by the above characterization of exponential transcendence degree so, by Theorem~\ref{ELA-well-defined}, $\gen{F,a}^{ELA}_{F_1} \iso (\gen{F,a}_{F_1})^{ELA}$. Then $\gen{F,a}^{ELA}_{F_1} \strong F_1$, so $\gen{F,a}^{ELA}_{F_1} = F_1$ because the extension is simple.
\end{proof}
Note that if $a$ is exponentially transcendental over $F$ then
$\loc(a,e^a/F) = G$ (recall that $G =\ga\cross \gm$), so the simple exponentially transcendental extension of $F$ can be written in our notation as $F|G$.

\begin{prop}\label{simple = perf rotund}
If $V$ is perfectly rotund then the strong extension of ELA-fields $F \strong F|V$ is simple and exponentially-algebraic.

Conversely, if $F \strong F'$ is a simple, exponentially-algebraic extension of ELA-fields then $F' \iso_F F|V$ for some perfectly rotund $V$.
%  Let $F$ be an ELA-field, and consider the strong ELA-extension $F \strong F|V$ where $V$ is additively and multiplicatively free, \Kummergen, and rotund. Then the extension is simple and exponentially algebraic iff $V$ is perfectly rotund.
\end{prop}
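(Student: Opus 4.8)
The plan is to prove the two directions separately, in both cases using the predimension characterisation of strong extensions (Proposition~\ref{rotund prop}) together with the additivity of $\delta$ (Lemma~\ref{strong lemma}(1)) and Fact~\ref{etd fact}. For the first direction, suppose $V$ is perfectly rotund; in particular $V$ is rotund, so $F \strong F|V$ by Proposition~\ref{rotund prop}, and it remains to show this extension is simple and exponentially algebraic. Let $\abar$ be the generating tuple with $(\abar,e^{\abar})$ generic in $V$ over $F$; since $\dim V = n$ and $\ldim_\Q(\abar/F) = n$ (additive freeness), we have $\delta(\abar/F) = 0$, and by Fact~\ref{etd fact} this gives $\etd^{F|V}(\abar/F) \le 0$, hence each $a_i$ is exponentially algebraic over $F$ and the extension is exponentially algebraic. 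For simplicity, suppose $F \strong F_2 \strong F|V$ with $F_2$ an intermediate ELA-field, $F \neq F_2 \neq F|V$. The key point will be to pick, inside $F_2$, a nonzero tuple from the $\Q$-span of $\abar$ over $F$ — say $\bbar = M\abar$ for some $M \in \Mat_{n\cross n}(\Z)$ with $r \leteq \rk M$ satisfying $0 < r$; one can arrange $F_2 \supseteq \gen{F,\bbar}_{F|V}$ and, by possibly enlarging within $F_2$ and using that $F_2 \neq F|V$, that $0 < r < n$. Then strongness of $F_2 \strong F|V$ applied to the remaining coordinates forces $\delta(\abar/F,\bbar) \ge 0$, while $\delta(\bbar/F) = \dim M\cdot V - r$; adding these via Lemma~\ref{strong lemma}(1) and using $\delta(\abar/F)=0$ gives $\dim M\cdot V \le r$, contradicting perfect rotundity. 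Hence no proper intermediate ELA-field exists and the extension is simple.

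For the converse, let $F \strong F'$ be simple and exponentially algebraic. By the remark following Definition~\ref{fp defn} (simple extensions are finitely generated) and Theorem~\ref{aleph0-stability}, $F' \iso_F F|V$ for some additively free, multiplicatively free, \Kummergen\ variety $V \subs G^n$; we must show $V$ can be taken perfectly rotund. First, $V$ is rotund by Proposition~\ref{rotund prop}. Next, since the extension is exponentially algebraic, choosing generators $\abar$ with $(\abar,e^\abar)$ generic in $V$, Fact~\ref{etd fact} gives $\min_{\bbar}\delta(\abar,\bbar/F) = 0$; but $F \strong F|V$ means every such $\delta$ is $\ge 0$, and taking $\bbar$ empty gives $\delta(\abar/F) = \dim V - n \ge 0$, so combined with the minimum being $0$ we must have $\dim V = n$ (and $\delta(\abar/F)=0$). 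It remains to verify the strict inequality $\dim M\cdot V \ge \rk M + 1$ for $0 < \rk M < n$, and here simplicity is used: if $\dim M\cdot V = \rk M =: r$ for some such $M$, then setting $\bbar = M\abar$ we get $\delta(\bbar/F) = 0$, so $\gen{F,\bbar}_{F|V} \strong F|V$ by Fact~\ref{etd fact}, and $F_2 \leteq \gen{F,\bbar}^{ELA}_{F|V}$ is an intermediate ELA-field which is proper on the left (as $r > 0$ and $\bbar \notin F$ by additive freeness applied to $M\cdot V$) and proper on the right (since $\gen{F,\bbar}_{F|V}$ has $\Q$-dimension $r < n$ over $D(F)$ in its domain, so cannot be all of $F|V$) — contradicting simplicity.

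The main obstacle I anticipate is the bookkeeping in the simplicity arguments: given an intermediate ELA-field $F_2$, one needs to locate a matrix $M$ witnessing a failure of (perfect) rotundity, which requires choosing a $\Q$-basis of $D(F_2) \cap \gen{\abar}/F$ inside the $n$-dimensional space spanned by $\abar$ and arguing that $0 < \rk M < n$ in the non-degenerate case — i.e., that $F_2$ being strictly between $F$ and $F|V$ forces the intermediate domain to have dimension strictly between $0$ and $n$. One must handle carefully the possibility that $F_2$ adds only exponentially algebraic elements whose domain contribution is zero, but this is ruled out because $F_2$ is an ELA-field strictly containing $F$: any new element, after passing to its exponential-algebraic closure, pulls in new domain elements (otherwise $F_2$ would be algebraic over $F$, contradicting $F$ being algebraically closed). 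Conversely, in the converse direction one must check that $F_2$ as constructed is genuinely a proper ELA-subfield, which is where additive and multiplicative freeness of the image variety $M \cdot V$ come in. Apart from this, the argument is a routine application of additivity of $\delta$ and the dictionary between $\delta$, $\etd$, and dimensions of image varieties already set up in the paper.
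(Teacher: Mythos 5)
Your first direction is essentially the paper's argument (intersect $F_2$ with the $\Q$-span of $\abar$ over $F$, extend to a spanning set, and use additivity of $\delta$ together with $F_2 \strong F|V$ to force $\dim M\cdot V = \rk M$ for some $0 < \rk M < n$), and it goes through with the same bookkeeping the paper leaves implicit. The converse, however, has a genuine gap, located precisely at your claim that $F_2 = \gen{F,\bbar}^{ELA}_{F|V}$ is \emph{proper} in $F|V$ ``since $\gen{F,\bbar}_{F|V}$ has $\Q$-dimension $r<n$ over $D(F)$ in its domain''. This conflates the partial E-field $\gen{F,\bbar}_{F|V}$ with its ELA-closure: the ELA-closure has all of itself as domain, and nothing prevents it from being all of $F|V$. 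Indeed the claim is false for an \emph{arbitrary} presentation of the kind Theorem~\ref{aleph0-stability} provides. For a concrete example, let $V_1 \subs G$ be the curve $x+y=1$ (perfectly rotund), let $(a_1,e^{a_1})$ be generic in $V_1$ over $F$, let $a_2$ be a logarithm of $a_1$ in $F|V_1$ chosen generically (as in the free construction), and let $V \subs G^2$ be the locus of $(a_1,a_2,e^{a_1},e^{a_2})$ over $F$, cut out by $x_1+y_1=1$ and $y_2=x_1$ (pass to $\abar/m$ if needed for Kummer-genericity; this changes nothing below). Then $V$ is irreducible, additively and multiplicatively free, rotund, of dimension $2=n$, and $F|V \iso_F F|V_1$ is simple and exponentially algebraic; yet $V$ is not perfectly rotund: for $M$ the projection onto the first coordinate, $\dim M\cdot V = \rk M = 1$, and $\gen{F,M\abar}^{ELA}_{F|V} = \gen{F,a_1}^{ELA}_{F|V} = F|V$, because any logarithm of $a_1$ in $F|V$ differs from one inside that ELA-subfield by a kernel element, and the kernel lies in $F$. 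So simplicity does not force a given free, Kummer-generic presentation to be perfectly rotund, and your construction produces no proper intermediate ELA-field.

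The paper's proof avoids this by choosing the generating tuple $\abar$ of \emph{minimal length} among tuples that generate $F'$ over $F$ and satisfy $\gen{F,\abar}_{F'} \strong F'$. If then $\dim M\cdot V = \rk M$ with $0 < \rk M < n$, one sets $F_2 = \gen{F,M\abar}^{ELA}_{F'}$, checks $F \strong F_2 \strong F'$ and $F_2 \neq F$ (additive freeness), and simplicity forces $F_2 = F'$ --- which is not itself a contradiction, but exhibits a generating tuple of length $\rk M < n$ (a $\Q$-basis of $M\abar$ over $F$, still strong in $F'$ since $\delta(M\abar/F)=0$), contradicting minimality. Your converse can be repaired by inserting exactly this minimality choice and replacing the false properness claim by this length count; as written it does not go through. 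Two smaller points: the strongness of $\gen{F,\bbar}_{F|V}$ in $F|V$ follows from $\delta(\bbar/F)=0$ plus additivity of $\delta$ and $F \strong F|V$, not from Fact~\ref{etd fact}; and in the forward direction you should insist that $\bbar$ spans all of $F_2 \cap D(\gen{F,\abar}_{F|V})$ over $F$, since otherwise the passage from $\delta(\abar/D(F_2)) \ge 0$ to $\delta(\abar/F,\bbar) \ge 0$ (a base change under which $\td$ can only drop, so one needs the linear dimensions to agree) is not justified.
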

\begin{proof}
 Let $\abar$ be the tuple generating $F|V$ over $F$ such that $(\abar,e^\abar) \in V$, and let $F_1 = \gen{F,\abar}_{F|V}$, the partial E-field extension of $F$ generated by $\abar$.

 Since $V$ is rotund and additively and multiplicatively free, $F \strong F|V$ is exponentially algebraic iff $\dim V = n$. Now suppose $F \strong F_2 \strong F|V$, a proper
  intermediate ELA-field. Then $(F_2 \cap D(F_1))/F$ is a non-trivial
  proper subspace of $D(F_1)/F$, which must be the span of
  $M\bar{a}$ for some $M \in \Mat_{n \cross n}(\Z)$, with $0 < \rk M <
  n$. Since $V$ is rotund, $\dim M\cdot V \ge \rk M$. Extend
  $M\bar{a}$ to a spanning set $M\bar{a},\bar{b}$ of $D(F_1)/F$.
  Then $\delta(\bar{b}/F, M\bar{a}) \ge 0$, because $F_2 \strong
  F|V$. But
  \begin{eqnarray*}
    \delta(\bar{b}/F, M\bar{a}) & = & \td(\bar{b},e^{\bar{b}}/F,
    M\bar{a}, e^{M\bar{a}}) - \ldim_\Q(\bar{b}/F, M\bar{a})\\
    &=& [n - \dim M\cdot V] - [n - \rk M]
  \end{eqnarray*}
  so $\dim M \cdot V \le \rk M$. Thus $\dim M \cdot V = \rk M$, and
  $V$ is not perfectly rotund.

For the converse, choose $\abar$ a tuple of smallest length which generates $F'$ over $F$ and such that $F_1 \leteq \gen{F,\abar}_{F'} \strong F'$, and let $V = \loc(\abar,e^\abar/F) \subs G^n$. Then $F' \iso F_1^{ELA}$. Since $n$ is minimal, $V$ is additively and multiplicatively free. By replacing $\abar$ by $\abar /m$ for some $m\in \N$, we may assume $V$ is \Kummergen. Since the extension is strong and exponentially algebraic, $V$ is rotund and $\dim V = n$. If $V$ is not perfectly rotund then there is a matrix $M$ with $0 < \rk M < n$ such that
  $\dim M \cdot V = \rk M$. Let $F_2 = \gen{F,M\bar{a}}_{F'}^{ELA}$. Then
  $F \strong F_2 \strong F'$, but $F_2 \neq F$ and $F \strong F'$ is simple, so $F_2 = F'$. But $F_2$ is generated by $M\bar{a}$, which is $\Q$-linearly dependent over $F$, so by a basis for it which is a tuple shorter than $\abar$. This contradicts the choice of $\abar$. So $V$ is perfectly rotund.
\end{proof}

We now consider the problem of when two extensions $F|V$ and $F|W$ are isomorphic. Suppose $\abar$ is a generator of $F|V$, with $(\abar,e^\abar) \in V$. Then if $\bbar$ is a different choice of basis of the extension, so $F \cup \bbar$ has the same $\Q$-linear span as $F \cup \abar$, and $W = \loc(\bbar,e^\bbar/F)$, then clearly $F|W \iso F|V$, but there is no reason why $W$ should be equal to $V$. Essentially this is the only way an isomorphism can happen.

\begin{defn}
Suppose $V \subs G^n$, $W \subs G^m$ are two perfectly rotund varieties, defined over $F$. Write $V \sim_F W$ iff $n=m$, there are $M_1,M_2 \in \Mat_{n\cross n}(\Z)$ of rank $n$, and there is $\cbar \in F^n$, such that $M_1 \cdot V = M_2 \cdot W + (\cbar,e^\cbar)$ (where $+$ means the group operation in $G^n$, so multiplication on the $\gm$ coordinates), and furthermore $M_1 \cdot V$ is Kummer-generic.
\end{defn}

\begin{prop}
If $V$ and $W$ are perfectly rotund and defined over $F$ then $F|V \iso_F F|W$ iff $V \sim_F W$.
\end{prop}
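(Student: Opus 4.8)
The plan is to prove both implications by translating between a presentation $F|V$ and the partial E-field $K_V := \gen{F,\abar}_{F|V}$ it is built from, using that perfect rotundity makes $F \strong K_V$ a finitely generated exponentially-algebraic strong extension (proof of Proposition~\ref{rotund prop} together with $\dim V = n$) and that $K_V^{ELA}$ is well-defined (Theorem~\ref{ELA-well-defined}).

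For the implication $V \sim_F W \Rightarrow F|V \iso_F F|W$: fix $M_1,M_2 \in \Mat_{n\cross n}(\Z)$ of rank $n$ and $\cbar \in F^n$ with $M_1\cdot V = M_2\cdot W + (\cbar,e^\cbar)$, where $M_1\cdot V$ is Kummer-generic. Let $\abar$ generate $F|V$ with $(\abar,e^\abar)$ generic in $V$, and $\bbar$ generate $F|W$ with $(\bbar,e^\bbar)$ generic in $W$. Since $M_1,M_2$ have rank $n$, the $\Q$-span of $M_1\abar$ over $D(F)$ equals that of $\abar$ (using that $F$ is total, so $D(F)=F$), so $\gen{F,M_1\abar}_{F|V}=K_V$; this partial E-field is generated by the generic point $(M_1\abar,e^{M_1\abar})$ of $M_1\cdot V$, and as $M_1\cdot V$ is additively free, multiplicatively free (both forced by $\rk M_1 = n$) and Kummer-generic, $K_V$ is the uniquely determined partial E-field attached to $M_1\cdot V$. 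Likewise $\gen{F,M_2\bbar}_{F|W}=K_W$ is the partial E-field attached to $M_2\cdot W$, which, being the translate of the Kummer-generic $M_1\cdot V$ by the $F$-rational point $(\cbar,e^\cbar)$ — here using that $F$ is an ELA-field, so $\cbar$ carries a coherent system of roots in $F$ — is itself additively free, multiplicatively free and Kummer-generic. Translation by $(\cbar,e^\cbar)$ is an isomorphism of algebraic groups over $F$ carrying $M_2\cdot W$ onto $M_1\cdot V$ and sending the generic point of $M_2\cdot W$, with its coherent root systems, to that of $M_1\cdot V$; by uniqueness it extends to an isomorphism $K_W \iso_F K_V$ of partial E-fields, and applying Theorem~\ref{ELA-well-defined} gives $F|W = K_W^{ELA} \iso_F K_V^{ELA} = F|V$.

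For the converse, suppose $\psi : F|W \iso_F F|V$. Put $M = F|V$, let $\abar$ (locus $V$, length $n$) generate $M$ over $F$ with $K_V := \gen{F,\abar}_M$, $F \strong K_V$, $K_V^{ELA}=M$, and set $\bbar' := \psi(\bbar)$ where $\bbar$ (locus $W$, length $m$) generates $F|W$. Since $\psi$ fixes $F$, $\loc_F(\bbar',e^{\bbar'})=W$, and $B := \gen{F,\bbar'}_M$ has $B \strong M$ (as $W$, being perfectly rotund, is rotund) and $B^{ELA}_M = M$. So inside the simple (Proposition~\ref{simple = perf rotund}), exponentially-algebraic extension $F \strong M$ we have two finitely generated strong partial E-subfields $A := K_V$ and $B$, each with full ELA-closure. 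The key step is to show $D(A)=D(B)$: the partial E-subfield $C$ of $M$ with domain $D(A)\cap D(B)$ is strong in $M$ (intersections of strong subspaces are strong, by submodularity of $\delta$), so $C^{ELA}_M$ is an intermediate ELA-field of the simple extension and equals $F$ or $M$; one rules out $C^{ELA}_M = M$ with $D(C)\subsetneq D(A)$ and rules out $D(A)\cap D(B)=D(F)$ with $A,B\neq F$, using Fact~\ref{etd fact} and the fact that $\abar,\bbar'$ are each exponentially algebraic over $F$ — whence every element of $A$, resp. $B$, is — which forces the relative predimension of each of $A,B$ over the other to vanish. Granting $D(A)=D(B)=:D_0$, we get $n = \ldim_\Q(D_0/D(F)) = m$, and since $V,W$ are additively free, $\abar$ and $\bbar'$ are both bases of $D_0$ over $D(F)$, so $\bbar' = P\abar + \dbar$ with $P\in \GL_n(\Q)$ and $\dbar\in F^n$. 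Clearing denominators yields $M_1,M_2\in\Mat_{n\cross n}(\Z)$ of rank $n$ and $\cbar\in F^n$ with $M_1\abar = M_2\bbar' + \cbar$; as before $\loc_F(M_1\abar,e^{M_1\abar}) = M_1\cdot V$ and $\loc_F(M_2\bbar',e^{M_2\bbar'}) = M_2\cdot W$, so $M_1\cdot V = M_2\cdot W + (\cbar,e^\cbar)$. To arrange $M_1\cdot V$ Kummer-generic, apply the argument of Lemma~\ref{rootdet lemma} to the finitely presented extension $F\subs A$ (finitely presented by Corollary~\ref{fg pE extensions are fp}): replace the generating tuple by a suitable $\abar/N$ so that its $I_1$ is Kummer-generic, then re-clear denominators, the matrices remaining of full rank. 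Hence $V\sim_F W$.

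The main obstacle is exactly the equality $D(A)=D(B)$, i.e. that the dimension $n$ of a perfectly rotund presentation is an invariant of the $F$-isomorphism type of $F|V$. The subtlety is that two strong partial E-subfields of $M$ with the same ELA-closure need not have the same domain — this genuinely fails in the exponentially-transcendental case $V=W=\ga\cross\gm$, which is why perfect rotundity (equivalently, exponential algebraicity together with $\dim V = n$) is indispensable — so the argument must use simplicity of the extension, the lattice structure of strong subspaces under $\delta$, and the $\etd$-computation of Fact~\ref{etd fact} in an essential way; the Kummer-genericity bookkeeping at the end of the converse is routine by comparison.
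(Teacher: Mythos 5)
Your forward direction and the endgame of your converse match the paper's proof, and your bookkeeping of Kummer-genericity at the end is actually more explicit than the paper's. But the converse has a genuine gap at exactly the point you flag as the main obstacle. Once you know the intersection $C = D(A)\cap D(B)$ is strictly bigger than $D(F)$, strongness of $C$ in $M$ plus perfect rotundity does force $D(A)=D(B)$ (if $C$ were spanned over $F$ by $M'\bbar'$ with $0<\rk M'<m$, then $\delta(C/F)\ge 1$ by perfect rotundity of $W$, while $C\strong M$ gives $\delta(\bbar'/C)\ge 0$, i.e.\ $\delta(C/F)\le \delta(D(B)/F)=0$; then symmetrically with $V$). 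The problem is the trivial-intersection case, i.e.\ $\bbar'$ $\Q$-linearly independent over $D(A)$: none of the tools you cite rules it out. In that configuration $\gen{C}^{ELA}_M=F$, which is perfectly consistent with simplicity; and the ``relative predimension vanishes'' observation is true but harmless --- since $A\strong M$, $\dim W=m$ and $\loc(\bbar',e^{\bbar'}/F)=W$, one gets $\delta(\bbar'/D(A))=0$ automatically, so no contradiction arises from Fact~\ref{etd fact} or exponential algebraicity. You need a positive argument that a $W$-generic point cannot sit inside $M=A^{ELA}$ independently of $D(A)$, and that does not follow from simplicity, submodularity, and $\etd$ computations alone.

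The paper closes this case with a chain argument through the construction of the ELA-closure: write $M$ as the union of a chain $F\strong F_1\strong F_2\strong\cdots$ with $F_1=\gen{F,\abar}_M$ and $\ldim_\Q(D(F_{n+1})/D(F_n))=1$, where $D(F_{n+1})$ is spanned over $D(F_n)$ by an element $c_{n+1}$ such that $c_{n+1}$ or $e^{c_{n+1}}$ is algebraic over $F_n$. If $\bbar'$ is linearly independent over $D(F_n)$ then (since $F_n\strong M$) its locus over $F_n$ is still $W$, and any nontrivial $\Q$-combination $b$ of $\bbar'$ falling into $D(F_{n+1})$ would have $b$ or $e^b$ algebraic over $F_n$, giving $\td(b,e^b/F_n)\le 1$; this contradicts the clause $\dim M\cdot W\ge\rk M+1$ of perfect rotundity, which forces $\td(b,e^b/F_n)\ge 2$. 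So independence propagates up the chain, contradicting $\bbar'\subs\bigcup_n D(F_n)$. This is where the ``$+1$'' in perfect rotundity is really used (and, as your own counterexample $V=W=G$ shows, some such use is unavoidable); your proposal should be repaired by inserting this induction in place of the appeal to Fact~\ref{etd fact}.
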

\begin{proof}
Firstly suppose that $V \sim_F W$, and let $V' = M_1 \cdot V$ where $M_1$ is as above. Let $K = \gen{F,\abar}_{F|V}$, where $(\abar,e^\abar) \in V$ is the generating tuple. Let $\bbar = M_1 \abar$. Then $\gen{F,\bbar}_{F|V} = K$, and $(\bbar,e^\bbar)$ is generic in $V'$. Furthermore, since $V'$ is Kummer-generic (by assumption), $K$ is well-defined by $V'$. Hence $F|V \iso F|V'$. Similarly, translating $V'$ to $V' - (\cbar, e^\cbar)$ for some $\cbar \in F^n$ does not change $K$. So $F|V \iso_F F|W$.

Conversely, suppose $F|V \iso F|W$. Let $(\abar,e^\abar) \in V$ be a generating tuple for $F|V$. Let $F_1 = \gen{F,\abar}_{F|V}$, and write $F|V$ as the union of a chain of partial E-fields
\[F \strong F_1 \strong F_2 \strong F_3 \strong \cdots \]
where for $n \in \N^+$ we have $\ldim_\Q(D(F_{n+1})/D(F_n)) = 1$, which is possible since $F|V = F_1^{ELA}$.
There is $\bbar \in F|V$ such that $\loc(\bbar,e^\bbar/F) = W$. Suppose that $\bbar$ is  $\Q$-linearly independent over $D(F_1)$. Then, since $F_1 \strong F|V$ we have $\loc(\bbar,e^\bbar/F_1) = W$. Now each $D(F_{n+1})$ is generated over $D(F_n)$ by a single element $c_{n+1}$ such that either $c_{n+1}$ or $e^{c_{n+1}}$ is algebraic over $F_n$. By perfect rotundity of $W$, no $b$ in the $\Q$-linear span of $\bbar$ satisfies this, so inductively we see that $\bbar$ is linearly independent over $D(F_n)$ for all $n$, a contradiction. So $\bbar$ is not \Q-linearly independent over $D(F_1)$. Write $B$ for the $\Q$-linear span of $F \cup \bbar$. Then $D(F_1) \strong F|V$ and $B \strong F|V$, so $D(F_1) \cap B \strong F|V$, and hence, since $V$ and $W$ are perfectly rotund, we must have $B = D(F_1)$, and $V \sim_F W$ as required.
\end{proof}

We can give a normal form for a finitely generated strong ELA-extension $F \strong F'$. The key is that the order of making simple extensions can often be interchanged.
\begin{lemma}\label{order of extensions}
  Let $F$ be a countable ELA-field, and $V \subs G^n$, $W \subs G^r$ two additively and multiplicatively free, irreducible, \Kummergen\ subvarieties, defined over $F$. Then 
  \[(F|V)|W \iso (F|W)|V \iso F|(V\cross W)\]
   as extensions of $F$.
\end{lemma}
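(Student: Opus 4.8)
The plan is to prove all three isomorphisms by identifying each extension with a single canonical ELA-closure. It suffices to establish $(F|V)|W \iso_F F|(V\times W)$: then $(F|W)|V \iso_F F|(W\times V)$ follows by the same argument with the roles of $V$ and $W$ interchanged, and $F|(W\times V)=F|(V\times W)$ because a generic point of $W\times V$ over $F$ becomes, after reordering coordinates, a generic point of $V\times W$, while reordering the generating tuple does not change the partial E-field it generates. So I would fix a tuple $\abar$ with $(\abar,e^\abar)$ generic in $V$ over $F$ (so $F|V=\gen{F,\abar}^{ELA}$) and then a tuple $\bbar$ with $(\bbar,e^\bbar)$ generic in $W$ over $F|V$, and set $P=\gen{F,\abar,\bbar}_{(F|V)|W}$. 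The two halves of the proof are then $(F|V)|W\iso_F P^{ELA}$ and $F|(V\times W)\iso_F P^{ELA}$; note that $P^{ELA}$ is well-defined by Proposition~\ref{ELA-well-defined}, since $P$ is a finitely generated extension of the countable LA-field $F$ and has full kernel (genericity of $\abar$ and $\bbar$, together with multiplicative freeness of $V$ and $W$, rules out new kernel elements).

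For the side $F|(V\times W)$: genericity of $\bbar$ in $W$ over $F|V\supseteq F(\abar,e^\abar)$ forces $\loc(\abar,\bbar,e^\abar,e^\bbar/F)=V\times W$ and shows $\abar\cup\bbar$ is $\Q$-linearly independent over $F$, and this independence, together with additive and multiplicative freeness of $V$ and $W$, shows that $V\times W$ is additively and multiplicatively free (a relation mixing the $\abar$- and $\bbar$-coordinates contradicts independence, one internal to $\abar$ or $\bbar$ contradicts freeness of $V$ or $W$). The step I expect to be the main obstacle is checking that $V\times W$ is \Kummergen, which is needed even to make sense of $F|(V\times W)$. I would do this in two stages. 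First, the coherent system of roots of $e^\abar$ over $F$ is unique up to $F$-isomorphism because $V$ is \Kummergen, so after such an isomorphism it generates a fixed field $F_V=F(\abar,e^\abar,\sqrt{e^{a_1}},\dots,\sqrt{e^{a_n}})$, which, $F$ being algebraically closed, is exactly of the shape the Thumbtack Lemma requires. Second, the coordinates of $e^\bbar$ are multiplicatively independent from $F^\times\cdot\gen{e^{a_1},\dots,e^{a_n}}$ — any nontrivial monomial in them is non-constant on $W$ by multiplicative freeness of $W$, while $\bbar$ is generic over $F(\abar,e^\abar)$ — so version~3 of the Thumbtack Lemma applied over $F_V$, combined with the uniqueness already provided over $F$ by Kummer-genericity of $W$, forces the coherent system of roots of $e^\bbar$ over $F_V$ to be determined up to isomorphism. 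Hence the full type of $(\abar,\bbar)$ with all roots of $e^\abar$ and $e^\bbar$ is the Kummer-generic one for $V\times W$; so $V\times W$ is additively free, multiplicatively free, and \Kummergen, $P$ is up to $F$-isomorphism the partial E-field generated by a generic point of $V\times W$, and therefore $F|(V\times W)\iso_F P^{ELA}$.

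For the side $(F|V)|W$: I would first show $P\strong(F|V)|W$. By Lemma~\ref{F strong in F^ELA}, $\gen{F,\abar}_{F|V}\strong F|V$ and $\gen{F|V,\bbar}_{(F|V)|W}\strong(F|V)|W$ (the latter being, by definition, the free ELA-closure of $\gen{F|V,\bbar}$); using the Addition Property together with the fact that $\delta(\bbar/B')$ takes the same value for every $B'$ between $D(\gen{F,\abar}_{F|V})$ and $D(F|V)$ (genericity of $\bbar$ over $F|V$), one gets $P\strong\gen{F|V,\bbar}_{(F|V)|W}$, hence $P\strong(F|V)|W$ by transitivity of strong extensions. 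Next, $(F|V)|W$ is the smallest ELA-subfield of itself containing $P$: such a subfield contains $\gen{F,\abar}$, hence its ELA-closure $F|V$, and also $\bbar$, hence $\gen{F|V,\bbar}$ and its free ELA-closure $(F|V)|W$. Since $P$ has full kernel and is a finitely generated extension of the countable LA-field $F$, Proposition~\ref{ELA-well-defined} then yields $(F|V)|W=\gen{P}^{ELA}_{(F|V)|W}\iso_F P^{ELA}$. Combining the two halves gives $(F|V)|W\iso_F P^{ELA}\iso_F F|(V\times W)$, which together with the reduction in the first paragraph completes the proof.
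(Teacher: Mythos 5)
Your overall strategy coincides with the paper's: identify each of the three fields with $P^{ELA}$, where $P=\gen{F,\abar,\bbar}$ is the partial E-field generated by a point generic in $V\cross W$ over $F$, check $P\strong (F|V)|W$, and conclude with Theorem~\ref{ELA-well-defined}. The symmetry reduction, the additive and multiplicative freeness of $V\cross W$, the kernel-preservation remark, and the strongness computation for $P$ are all fine.

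The gap is exactly at the step you yourself flag as the main obstacle, and your proposed fix does not close it. Thumbtack Lemma version~3 over $F_V$ controls the coherent system of roots of $e^{\bbar}$ only \emph{above some level $N$}, and only relative to one particular choice of $N$\th\ roots $c_i'$; it says nothing about which roots of $e^{\bbar}$ of order dividing $N$ occur. You propose to remove this residual finite ambiguity using Kummer-genericity of $W$ over $F$, but the isomorphisms that this provides fix only $F$ (and the tuple $(\bbar,e^{\bbar})$); they need not fix $\abar$ or the chosen coherent roots of $e^{\abar}$, i.e.\ they need not fix $F_V$. So the two ingredients do not combine: ``full system unique over $F$'' plus ``tail unique over $F_V$'' does not yield ``full system unique over $F_V$'' — the possible failure is precisely that adjoining $\abar$ and the roots of $e^{\abar}$ might split or collapse the finite Kummer extensions of $F(\bbar,e^{\bbar})$ generated by the low-level roots of $e^{\bbar}$, and neither cited fact excludes this. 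What is needed is the fact the paper invokes in the first sentence of its proof: Kummer-genericity is preserved under base change from one algebraically closed field to a larger one, so $W$ remains \Kummergen\ over $F|V$ (equivalently over the algebraic closure of $F(\abar,e^{\abar})$). Granting that, genericity of $(\bbar,e^{\bbar})$ in $W$ over $F|V$ determines the whole root system over $F|V$, hence over $F_V$, and the rest of your argument, including the identification $F|(V\cross W)\iso_F P^{ELA}$, goes through. So you should prove or quote that base-change statement (for instance via the characterisation of Kummer-genericity through irreducibility of the preimages of the variety under the multiplication-by-$m$ maps, a geometric condition invariant under extension of algebraically closed base, cf.\ \cite[Lemma~5.1]{BZ11}), rather than deriving it from Thumbtack version~3 over $F_V$ together with Kummer-genericity of $W$ over $F$.
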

\begin{proof}
First note that the extension $(F|V)|W$ makes sense, since in the base change from $F$ to $F|V$, the variety $W$ remains free, irreducible, and Kummer-generic, because both $F$ and $F|V$ are algebraically closed. Similarly $(F|W)|V$ makes sense.
  Now let $\abar, \bbar$ be the tuples in $F_1 = (F|V)|W$ such that $(\abar,e^\abar) \in V$ determines the first extension and $(\bbar,e^\bbar) \in W$ determines the second extension. Similarly, let $\abar',\bbar'$ be the equivalent tuples in $F_2 = (F|W)|V$. Then the partial E-fields $K_1 = \gen{F\abar,\bbar}_{F_1}$ and $K_2 = \gen{F\abar',\bbar'}_{F_2}$ are isomorphic extensions of $F$, because both $(\abar,e^\abar,\bbar,e^\bbar)$ and $(\abar',e^{\abar'},\bbar',e^{\bbar'})$ are generic in $V \cross W$ over $F$. Now $K_1 \strong F_1$ and $K_2 \strong F_2$, hence the result follows by Theorem~\ref{ELA-well-defined}.
\end{proof}
Indeed the extensions $F|V$ and $F|W$ can be freely amalgamated over $F$, and the free amalgam is in fact given by $F|(V\cross W)$.

Now consider a finitely generated strong extension of countable ELA-fields $F \strong F'$. Let $\abar_1$ be some tuple from $F'$, $\Q$-linearly independent over $F$, such that $V_1 \leteq \loc(\abar_1,e^{\abar_1}/F)$ is perfectly rotund. If it does not exists, then $F = F'$. So we have $F = F_0 \strong F_1 = \gen{F,\abar_1}^{ELA}_{F'} \strong F'$. Now iteratively choose tuples $\abar_i$, $\Q$-linearly independent over $F_{i-1}$, such that $V_i \leteq \loc(\abar_i,e^{\abar_i}/F)$ is perfectly rotund and defined over $F_j$ where $j$ is as small as possible, and define $F_i = \gen{F_{i-1},\abar_i}^{ELA}_{F'}$. At some finite stage we will exhaust $F'$. The previous propositions show there is only a very limited scope for choosing the tuples $\abar_i$. Thus we have a Jordan-H\"older-type theorem, showing how a finitely generated extension decomposes as a chain of simple extensions, and the extent to which the chain is unique.
\begin{theorem}\label{Jordan-Holder}
If $F \strong F'$ is a finitely generated strong extension of countable ELA-fields, then it can be decomposed as:
\[F = K_0 \strong K_1 \strong K_2 \strong \cdots \strong K_r = F'\]
such that $K_i = K_{i-1}|V_i$ with $V_i =  V_{i,1} \cross \cdots V_{i,m_i}$
with each $V_{i,j}$ perfectly rotund and defined over $K_{i-1}$ but not defined over $K_{i-2}$. Furthermore, if there is another decomposition
\[F = K_0 \strong K_1' \strong K_2' \strong \cdots \strong K_s' = F'\]
such that $K_i' = K_{i-1}'|V_i'$ with $V_i' =  V_{i,1}' \cross \cdots V_{i,q_i}'$
then $s=r$ and, for each $i$, $q_i = m_i$ and there is a permutation $\sigma$ of $\{1,\ldots,m_i\}$ such that $V_{i,j}' \sim_{F'} V_{i,\sigma(j)}$. \qed
\end{theorem}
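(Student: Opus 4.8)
The plan is a Jordan--H\"older-type argument carried out in the lattice of intermediate ELA-fields that are strong over $F$. The ingredients are: the classification of simple strong extensions as the $F|V$ with $V$ perfectly rotund (Proposition~\ref{simple = perf rotund}) together with the unique exponentially transcendental simple extension $F|G$ (Lemma~\ref{unique generic extensions}); the criterion $F|V \iso_F F|W \iff V \sim_F W$; the fact that simple extensions commute and amalgamate freely (Lemma~\ref{order of extensions} and the remark following it); and stability of strongness under the operations needed, in particular under intersection and free amalgamation. One also uses that a finitely generated extension admits no infinite chain of proper strong ELA-subextensions, since a finite ELA-generating tuple already lies in some finite stage of such a chain.

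\textbf{Existence.} This is the greedy construction indicated just before the statement, made precise: set $K_0 = F$, and given $K_0 \strong \cdots \strong K_{i-1}$ obtain $K_i$ by adjoining, one simple extension at a time, \emph{every} simple strong extension of $K_{i-1}$ that is defined over $K_{i-1}$ --- i.e.\ choose tuples $\abar$ from $F'$, $\Q$-linearly independent over the current domain, with $\loc(\abar,e^\abar/K_{i-1})$ perfectly rotund and defined over $K_{i-1}$ (or $\abar$ a single element exponentially transcendental over $K_{i-1}$), and pass to ELA-closures. Each step is strong by Proposition~\ref{rotund prop}, and the process terminates by the finiteness remark; Lemma~\ref{order of extensions} collapses the finitely many steps from $K_{i-1}$ to $K_i$ into $K_i = K_{i-1}|V_i$ with $V_i = V_{i,1}\cross\cdots\cross V_{i,m_i}$, each $V_{i,j}$ perfectly rotund (exponentially transcendental steps give factors $G$, absorbed the same way). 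The requirement ``$V_{i,j}$ defined over $K_{i-1}$ but not over $K_{i-2}$'' is automatic: whatever was definable over $K_{i-2}$ was adjoined at stage $i-1$.

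\textbf{Uniqueness.} The engine is that the whole chain $(K_i)$ is intrinsic; I would prove this for $K_1$ and then induct on the number of layers of the shorter extension $K_1 \strong F'$. Let $S$ be the union of all tuples $\abar$ from $D(F')$ with $\loc(\abar,e^\abar/F)$ perfectly rotund, together with all elements of $F'$ exponentially transcendental over $F$. I claim $K_1 = \gen{F\cup S}^{ELA}_{F'}$: each $F|\loc(\abar,e^\abar/F)$ with $\abar\in S$ is a simple strong sub-extension with defining variety over $F=K_0$, hence adjoined at the first stage of \emph{any} decomposition produced above, so contained in $K_1$; conversely the first-stage generators lie in $S$ by definition. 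That $\gen{F\cup S}^{ELA}_{F'}$ is a finitely generated strong extension of $F$ follows by writing it as $F|(W_1\cross\cdots\cross W_\ell)$ over a finite $\Q$-basis of the span of $S$, using Lemma~\ref{order of extensions} and that a product of perfectly rotund varieties --- being the locus of a free amalgam of strong extensions --- is rotund (Proposition~\ref{rotund prop}). Thus $K_1 = K_1'$; replacing $F$ by $K_1$ and applying the inductive hypothesis gives $r=s$ and $K_i=K_i'$ for all $i$, and reduces the problem to a single semisimple layer $F\strong K_1$.

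\textbf{The semisimple layer, and the main obstacle.} It remains to show that if $F\strong K_1$ is written as $F|(V_{1,1}\cross\cdots\cross V_{1,m})$ and also as $F|(V_{1,1}'\cross\cdots\cross V_{1,m'}')$ with all factors perfectly rotund, then $m=m'$ and, up to a permutation $\sigma$, $V_{1,j}'\sim_{F'}V_{1,\sigma(j)}$ (the $\sim_{F'}$ rather than $\sim_F$ absorbs the translation terms $\cbar\in(F')^n$ from the definition of $\sim$; the exponentially transcendental factors are all $G$ by Lemma~\ref{unique generic extensions} and number the exponential transcendence degree of $F'$ over $F$ on each side by Fact~\ref{etd fact}). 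By Lemma~\ref{order of extensions} and the free-amalgamation remark, $K_1$ is the free amalgam over $F$ of the simple sub-extensions $A_j=F|V_{1,j}$, so $D(K_1)$ is the internal direct sum of the $D(A_j)$ over $D(F)$ and the predimension splits additively along the factors (Lemma~\ref{strong lemma}); similarly for the $A_k'=F|V_{1,k}'$. For each $\sim_F$-class $[V]$ of perfectly rotund varieties over $F$, the sub-extension generated over $F$ by all simple subs isomorphic to $F|V$ is then intrinsic, and the number of $j$ with $V_{1,j}\sim_F V$ should equal its ``multiplicity'', read off from $\Q$-linear dimensions of domains; hence the multiset $\{\,[V_{1,j}]\,\}$ is independent of the decomposition, giving $\sigma$. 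The step I expect to be the genuine obstacle is exactly this last one --- the analogue, in the category of ELA-extensions with its predimension, of the fact that a simple submodule of a finite direct sum of simple modules is isomorphic to one of the summands and that the multiplicity of each isomorphism type is an invariant. It requires controlling how a simple strong sub-extension of $F'$ can sit ``diagonally'' across a free amalgam, and rests on the $\delta$-calculus of Lemma~\ref{strong lemma}, Proposition~\ref{rotund prop}, stability of strongness under intersection and free amalgamation, and careful bookkeeping of the integer matrices and translation terms in the definition of $\sim_F$; everything else (existence, and the identification of $K_1$) is comparatively routine once this is in hand.
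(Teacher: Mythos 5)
Your overall strategy is the same as the paper's, which in fact offers no written-out proof: the theorem carries a \qed and is justified only by the preceding construction sketch together with the results of the section (Proposition~\ref{simple = perf rotund}, the criterion that $F|V\iso_F F|W$ iff $V\sim_F W$, Lemma~\ref{order of extensions}, and Theorem~\ref{aleph0-stability}). Your existence argument and your reduction of uniqueness to a single ``semisimple layer'' follow that outline. But there is a genuine gap, and you name it yourself: the exchange/multiplicity step --- that a perfectly rotund simple strong subextension of a layer $K_{i-1}|(V_{i,1}\cross\cdots\cross V_{i,m_i})$ must, up to $\sim_{F'}$, line up with one of the factors, and that the number of factors in each $\sim_{F'}$-class is an invariant of the layer --- is asserted (``rests on the $\delta$-calculus and careful bookkeeping'') but never carried out. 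This is precisely the content of the theorem beyond what the quoted propositions already give: the $\sim_F$-criterion is proved in the paper only for a single perfectly rotund $V$, by writing $F|V$ as a union of partial E-fields with one-dimensional domain increments and using perfect rotundity of $W$ to force the span of the new tuple to coincide with $D(F_1)$. To close your gap you must extend that argument to a free amalgam of several factors, where submodularity leaves open the ``diagonal'' configuration $\bbar=M\abar_j+\bar{d}$ with $\bar{d}$ in the span of the other factors but not over the base; one must either rule this out or verify that the translation term $\cbar\in(F')^n$ in the definition of $\sim_{F'}$ really absorbs it, and then still extract well-defined multiplicities (your suggestion of reading them off from $\Q$-linear dimensions does not by itself distinguish inequivalent factors of the same dimension). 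Until this is done the uniqueness clause is unproven.

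Two secondary points. First, your existence step adjoins ``every'' simple strong extension defined over $K_{i-1}$ and your identification of $K_1$ uses ``a finite $\Q$-basis of the span of $S$''; but the span of $S$ is not finite-dimensional (if $t$ is exponentially transcendental over $F$, so are $t^2,t^3,\dots$, and they are $\Q$-linearly independent), and your chain-stabilisation remark only shows that a chain whose union is $F'$ stabilises, not that the greedy stage involves finitely many genuinely new simple subextensions; a real bound (via $\etd(F'/F)$ for the transcendental part and a counting argument for the perfectly rotund part) is needed. Second, $G$ is not perfectly rotund ($\dim G=2\neq 1$), so the exponentially transcendental factors are not literally covered by the statement; you noticed this and handled it via Lemma~\ref{unique generic extensions}, which is reasonable, but ``absorbed the same way'' hides that the perfect-rotundity arguments do not apply verbatim to those factors --- a wrinkle of the statement itself more than of your argument.
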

A finer analysis is possible, in which one takes into account for each $V_{i,j}$ precisely which of the $V_{s,t}$ for $s<i$ are involved in the field of definition of $V_{i,j}$, to produce a partial order on the simple extensions.

\section{The strong exponential-algebraic closure}

We now consider the analogue for exponential fields of the algebraic closure of a field. 

\begin{defn}
  An exponential field $F$ is said to be \emph{strongly exponentially-algebraically closed} iff it is an ELA-field and for every finitely generated partial E-subfield $A$ of $F$, and every finitely generated kernel-preserving exponentially algebraic strong partial E-field extension $A \strong B$, there is an embedding $B \into F$ fixing $A$.
\end{defn}

The word \emph{strongly} in this definition actually does not refer to
the strong extensions, but rather signifies that the property is
stronger than another property, called \emph{exponential-algebraic
  closedness}, which was also considered by Zilber. Exponential-algebraic
closedness is a model-theoretic approximation to strong
exponential-algebraic closedness which is first-order axiomatizable,
but strong exponential-algebraic closedness is the sensible notion
from the algebraic point of view taken in this paper.

We now show that every countable ELA-field has a well-defined strong
exponential-algebraic closure. 
\begin{theorem}\label{EA-closures}
  Let $F$ be a countable ELA-field. Then there is a \seac\ $\EAC{F}$ with $F \strong
  \EAC{F}$ such that if $F \strong K$, $K$ is strongly exponentially-algebraically closed, and $\ker(K) = \ker(F)$ then there is a strong embedding $\EAC{F} \strong K$ such that 
\begin{diagram}[small]
  F & \rStrong & \EAC{F}\\
    & \rdStrong & \dStrong \\
    &            & K
\end{diagram}
commutes. Furthermore, $\EAC{F}$ is unique up to isomorphism as an
extension of $F$. We call it the \emph{strong exponential-algebraic closure} of $F$.
\end{theorem}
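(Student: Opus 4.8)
The statement has two halves: the \emph{existence} of some $\EAC{F}$ with the stated properties, and the \emph{universal property} (the commuting triangle), from which \emph{uniqueness} should follow. I would organise the proof around these, and I expect essentially all the difficulty to concentrate in one lemma inside the universal property.

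\textbf{Existence.} The key observation is that \seac ness only asks for embeddings of \emph{exponentially algebraic} strong extensions, so I would construct $\EAC{F}$ as an exponentially-algebraic extension of $F$. Fix a bookkeeping enumeration of all pairs $(A,B)$ in which $A$ is a finitely generated partial E-subfield and $A \strong B$ a finitely generated, kernel-preserving, exponentially-algebraic strong extension, each isomorphism type occurring cofinally, and build a chain $F = F_0 \strong F_1 \strong F_2 \strong \cdots$ of countable ELA-fields as follows: at stage $n$, if the $n$-th task $(A,B)$ has $A$ embedding as a finitely generated partial E-subfield of $F_n$, realise $B$ over $A$ inside a finitely generated strong ELA-extension $F_n \strong F_{n+1}$. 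Such an $F_{n+1}$ exists by the machinery already developed: using Theorem~\ref{Jordan-Holder} (or Proposition~\ref{simple = perf rotund}) present $A \strong B$ via finitely many perfectly rotund varieties, base-change them to $F_n$ --- rotundity survives because it is purely dimension-theoretic, and additive/multiplicative freeness is restored by re-choosing generators $\Q$-linearly independent over $D(F_n)$ --- and form the corresponding $|V$-extension, strong by Proposition~\ref{rotund prop} and Lemma~\ref{order of extensions}. Put $\EAC{F} = \bigcup_n F_n$. Any finitely generated partial E-subfield of $\EAC{F}$ already lies in some $F_n$, so every task over it is handled at a later stage; hence $\EAC{F}$ is \seac. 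It is exponentially algebraic over $F$ by construction, has the same kernel as $F$ since every step is kernel-preserving, and $F \strong \EAC{F}$ by Lemma~\ref{strong lemma}(6).

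\textbf{The universal property.} Let $F \strong K$ with $K$ \seac and $\ker(K) = \ker(F)$. Write $\EAC{F} = \bigcup_n F_n$ with $F = F_0$, each $F_n$ a finitely generated strong ELA-extension of $F$; refining the chain and using that $\EAC{F}$ is exponentially algebraic over $F$, we may take each step simple, so by Proposition~\ref{simple = perf rotund} $F_{n+1} = F_n|V_n$ with $V_n$ perfectly rotund over $F_n$. I would construct strong embeddings $\theta_n : F_n \strong K$ fixing $F$, with $\theta_{n+1}$ extending $\theta_n$, starting from $\theta_0 = $ the inclusion (strong since $F \strong K$). Given $\theta_n$, put $C = \theta_n(F_n)$, a countable strong ELA-subfield of $K$ with $\ker(C) = \ker(K)$, and let $W = V_n^{\theta_n}$, perfectly rotund over $C$. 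The crucial input is a \emph{Key Lemma}: there is $(\bbar,e^{\bbar}) \in K$ generic in $W$ over $C$. Granting this, $\delta(\bbar/D(C)) = \dim W - n = 0$, so for any finite $\xbar$ from $D(K)$ we get $\delta(\xbar/D(\gen{C,\bbar}_K)) = \delta(\xbar\bbar/D(C)) - \delta(\bbar/D(C)) = \delta(\xbar\bbar/D(C)) \ge 0$ because $C \strong K$; thus $\gen{C,\bbar}_K \strong K$. Moreover $\gen{C,\bbar}_K \iso \gen{F_n,\abar}_{F_n|V_n}$ via an isomorphism extending $\theta_n$ and sending $\abar$ to $\bbar$ (both are generated by a generic point of the same variety over isomorphic bases), which by Theorem~\ref{ELA-well-defined} lifts to $F_{n+1} = (\gen{F_n,\abar}_{F_n|V_n})^{ELA} \iso \gen{C,\bbar}^{ELA}_K$, the latter being strong in $K$ by Theorem~\ref{ELA-well-defined}(2). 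Take $\theta_{n+1}$ to be this isomorphism. Then $\theta = \bigcup_n\theta_n : \EAC{F} \to K$ fixes $F$, and by Lemma~\ref{strong lemma}(7) its image $\bigcup_n\theta_n(F_n)$ --- an increasing chain of strong ELA-subfields of $K$ --- is strong in $K$, so $\theta$ is the required strong embedding and the triangle commutes.

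\textbf{The Key Lemma and uniqueness.} This is the only place \seac ness is used, and it is where I expect the main obstacle: $C$ is \emph{not} finitely generated, whereas \seac ness only speaks of finitely generated partial E-subfields. First I would reduce the Lemma: since $C \strong K$, any $\bbar$ from $D(K)$ with $(\bbar,e^{\bbar}) \in W$ and $\bbar$ $\Q$-linearly independent over $D(C)$ automatically has $\loc(\bbar,e^{\bbar}/C) = W$ (its dimension is at least $\ldim_\Q(\bbar/D(C)) = n = \dim W$ and $W$ is irreducible), so it suffices to produce such a $\bbar$. Exhaust $C$ by an increasing chain of finitely generated partial E-subfields $A_0 \subseteq A_1 \subseteq \cdots$ containing the field of definition of $W$; $W$ stays perfectly rotund over each $A_k$ (the dimension inequalities are base-free, freeness passes downwards, and Kummer-genericity over the $A_k$ can be arranged by replacing the generators by $\abar/N$ as in Lemma~\ref{rootdet lemma} --- this bookkeeping is the one remaining technical nuisance). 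Applying \seac ness over $A_0$ yields a realisation $\bbar_0$; if it is already $\Q$-independent over $D(C)$ we are done, otherwise its linear dependences over $D(C)$ have coefficients in some $A_k$, and re-realising $W$ over that $A_k$ removes them while, by genericity over $A_k$, forcing any new dependence to involve a strictly later member of the chain. Dovetailing through the countably many potential dependences, the countability of $C$ should drive this process to a realisation that is $\Q$-independent over all of $D(C)$; making this iteration converge to an actual element of $K$ is the heart of the argument. Finally, for uniqueness: given two fields satisfying the conclusion, the universal property yields strong embeddings over $F$ in both directions, and since $\EAC{F}$ is exponentially algebraic over $F$ the resulting strong self-embedding of $\EAC{F}$ over $F$ must be onto (running the same forth-argument back-and-forth shows $\EAC{F}$ has no proper strong \seac subextension of $F$), so the two fields are isomorphic over $F$.
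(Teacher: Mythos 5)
Your overall architecture matches the paper's: existence by enumerating, as tasks, the perfectly rotund varieties over finitely generated subfields (equivalently the finitely generated exponentially-algebraic strong extensions), realising them along an $\omega$-chain of strong ELA-extensions and taking the union; then primality and uniqueness by extending partial embeddings along a decomposition into simple extensions. The existence half is essentially the paper's construction and is fine. The trouble is in the second half, which the paper compresses into ``a standard back-and-forth argument'' and which you attempt to spell out: your Key Lemma is indeed the crux, and it is left unproved. Your reduction is correct --- since $C \strong K$ and $W$ is irreducible of dimension $n$, any point of $W$ on the graph of exponentiation whose coordinates are $\Q$-linearly independent over $D(C)$ is automatically generic over $C$ --- but the definition of \seac ness only yields points generic over \emph{finitely generated} partial E-subfields, while $C=\theta_n(F_n)\supseteq F$ is countable and (since $F$ is an arbitrary countable ELA-field, possibly without ASP) need not contain any finitely generated strong subfield. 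Your proposed bridge --- exhaust $C$ by finitely generated $A_k$ and ``dovetail'' re-realisations --- does not close this: each re-realisation is a fresh witness satisfying only finitely many of the countably many independence requirements, and no limiting element of $K$ is produced; you say so yourself (``making this iteration converge to an actual element of $K$ is the heart of the argument''). Nor can the difficulty be re-routed by cutting $\EAC{F}$ into absolutely finitely generated pieces, because the embedding must restrict to the identity on all of the infinitely generated field $F$, so the type of each new generator over a countable base containing $F$ really must be realised. Some genuine argument upgrading genericity from finitely generated to countable strong bases (in the spirit of the paper's Lemma~\ref{seac characterization}, where strongness of a finite tuple and uniqueness of free ELA-closures via Theorem~\ref{ELA-well-defined} do this upgrade for finitely generated ELA-subfields) has to be supplied; as written, the central lemma of the primality half is an assertion.

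There is a second, independent flaw in the uniqueness step. You propose to embed the two candidates strongly into each other over $F$ and conclude that the composite strong self-embedding of $\EAC{F}$ over $F$ is onto, ``since $\EAC{F}$ has no proper strong \seac\ subextension of $F$''. Mutual strong embeddability does not give isomorphism in this category (there is no Schr\"oder--Bernstein argument available), and the supporting claim is false: the paper notes immediately after the theorem that when $\EAC{F}\neq F$ it is \emph{not} minimal over $F$ --- by omitting all but one of the $\aleph_0$ freely amalgamated realisations of $V_1$ one obtains a proper ELA-subfield, strong in $\EAC{F}$ and isomorphic to it over $F$, hence a proper strong \seac\ subextension. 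The correct route is a direct back-and-forth between the two candidate closures (and, for the triangle over $F$, a ``forth'' into $K$), interleaving the two directions so that the union of the partial isomorphisms is surjective by construction; but each extension step of that back-and-forth again requires exactly the Key Lemma, so both halves of your second part rest on the same unproved statement.
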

The key property we need is the amalgamation property, which follows from Lemma~\ref{order of extensions}.
\begin{proof}[Proof of Theorem~\ref{EA-closures}]
  Let $F$ be a countable ELA-field. List the triples
  $(n_\alpha,V_\alpha,A_\alpha)_{\alpha < \omega}$ such that
  $n_\alpha \in \N^+$, $V_\alpha$ is a perfectly rotund subvariety of $G^{n_\alpha}(F)$, $A_\alpha$ is a finitely generated subfield of $F$ over which $V_\alpha$ is defined, and $F$ does not contain $\bbar$ such that $(\bbar,e^\bbar)$ is generic in $V_\alpha$ over $A_\alpha$. Note that if $F$ is not \seac\ then there will be $\aleph_0$ such triples.

Let $F_1$ be the ELA-extension of $F$ obtained by making the simple ELA-extensions determined by each $V_\alpha$ in turn. By Lemma~\ref{order of extensions} (and a back and forth argument), $F_1$ does not depend on the choice of well-ordering. Now iterate the process to produce a chain
\[F \strong F_1 \strong F_2 \strong F_3 \strong \cdots\]
and let $\EAC{F}$ be the union of this chain.

By construction, $\EAC{F}$ is strongly exponentially-algebraically closed, and $F \strong \EAC{F}$. Furthermore if $F$ is \seac\ then $F = \EAC{F}$. The primality property and the uniqueness of $\EAC{F}$ follow from a standard back-and-forth argument.
\end{proof}
If $F$ is a partial E-field such that $F^{ELA}$ is well-defined, then
we also write $\EAC{F}$ for $\EAC{(F^{ELA})}$.

Note that if $\EAC{F} \neq F$ then $\EAC{F}$ will not be \emph{minimal} over $F$, that is, it will be isomorphic over $F$ to a proper subfield of itself. This is because we adjoin $\aleph_0$ copies of the extension of $F$ defined by $V_1$ in constructing $F_1$, and if we miss out co-countably many of those realisations, we get a proper ELA-subfield of $F_1$ which is isomorphic over $F$ to $F_1$.

\begin{defn}
  Zilber's \emph{pseudo-exponential fields} are precisely the exponential fields $F$ satisfying the following properties:
 \begin{enumerate}
 \item $F$ is an ELA-field;
 \item $F$ has standard kernel;
 \item The Schanuel property holds; 
 \item $F$ is \seac;
 \item For any countable subset $A \subs F$, the exponential-algebraic closure $\ecl^F(A)$ is countable. 
\end{enumerate}
\end{defn}
Of course these are genuine exponential fields in our algebraic sense. The prefix ``pseudo'' refers to Zilber's programme of pseudo-\emph{analytic} or pseudo-\emph{complex} structures. 

\begin{construction}[Zilber's pseudo-exponential fields]\label{pseudoexp}
Let $B_0 = \EAC{SK}$. For each ordinal $\alpha$, define $B_{\alpha+1} = \EAC{(B_\alpha|G)}$. For limit $\alpha$, take unions. It is easy to see that the exponential transcendence degree of $B_\alpha$ is $|\alpha|$, and that the isomorphism type of
$B_\alpha$ depends only $|\alpha|$. For a cardinal $\kappa$ we write $B_\kappa$ for the model of exponential transcendence degree $\kappa$.  
\end{construction}

By construction, the $B_\kappa$ satisfy Zilber's axioms, and hence are pseudo-exponential fields. Although $B_\kappa$ exists for all cardinals $\kappa$, we have only proved that $F^{ELA}$ and hence $\EAC{F}$ are  uniquely defined for countable $F$, and hence the arguments of this paper only show that $B_\kappa$ is well-defined for countable $\kappa$.

We now proceed to examine strong exponential-algebraic closedness in more detail before proving that the $B_\kappa$ for countable $\kappa$ are the only countable pseudo-exponential fields.

The property of strong exponential-algebraic closedness is most useful
when there is a proper subset of $F$ which is strongly embedded in
$F$, and especially when a finite such subset exists.
\begin{defn}
  An E-field $F$ is said to have ASP, \emph{Almost the Schanuel
    Property}, iff there is a finite tuple $\cbar$ from $F$ such that
    $\gen{\cbar}_F \strong F$.
\end{defn}
\begin{lemma}
Any strong extension of a finitely presented ELA-field has ASP.
\end{lemma}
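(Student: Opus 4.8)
The plan is to unwind the definitions. Suppose $F$ is a finitely presented ELA-field, so $F = F_0^{ELA}$ for some finitely generated partial E-field $F_0$ with full kernel, say $F_0 = \gen{\cbar}_{F_0}$ for a finite tuple $\cbar$. Let $F \strong F_1$ be a strong extension. I want to produce a finite tuple from $F_1$ whose partial E-subfield is strong in $F_1$, and the obvious candidate is $\cbar$ itself. So the first step is to show $\gen{\cbar}_{F_1} \strong F_1$. We already know $F_0 = \gen{\cbar}_{F_0} \strong F_0^{ELA} = F$ by Lemma~\ref{F strong in F^ELA} (since $F = F_0^{ELA}$ is built by iterating the free operations starting from $F_0$, each of which is strong, and composites of strong extensions are strong by Lemma~\ref{strong lemma}). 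Composing with the hypothesis $F \strong F_1$ and using transitivity of strong extensions (Lemma~\ref{strong lemma}), we get $\gen{\cbar}_{F_0} \strong F_1$.

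The one thing to check carefully is that $\gen{\cbar}_{F_0}$ and $\gen{\cbar}_{F_1}$ denote the same partial E-field: i.e.\ that the partial E-subfield generated by $\cbar$ does not change when we pass to the larger ambient field. By the definition of $\gen{X}_F$ given in \S2, $D(\gen{\cbar}_{F_1})$ is the $\Q$-span of $D(F_1) \cap \cbar$ and the underlying field is generated by $D(\gen{\cbar}_{F_1}) \cup I(\gen{\cbar}_{F_1}) \cup \cbar$. Since $\cbar$ already lies in $F_0$ and the extension $F_0 \subs F \subs F_1$ is kernel-preserving and, more to the point, does not enlarge the domain on elements of $F_0$ — an element of $\cbar$ is in $D(F_1)$ iff it is in $D(F_0)$, because extensions of partial E-fields are required to restrict correctly — the two generated partial E-subfields coincide (note here we use the standing convention that the exponential on a subfield is the restriction of the ambient one, so no new exponential values on $\cbar$ or its $\Q$-multiples appear). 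Hence $\gen{\cbar}_{F_1} = \gen{\cbar}_{F_0} \strong F_1$, which is exactly the statement that $F_1$ has ASP with witness $\cbar$.

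I do not expect any serious obstacle here; the lemma is essentially bookkeeping, combining the definition of finite presentation in Definition~\ref{fp defn}, the fact that the free ELA-operations are strong (Lemma~\ref{F strong in F^ELA}), and transitivity of strong extensions (Lemma~\ref{strong lemma}). The only mildly delicate point is the identification $\gen{\cbar}_{F_1} = \gen{\cbar}_{F_0}$, which must invoke the kernel-preserving (and domain-preserving) conventions in force; if one were sloppy about that, one might worry the domain of $\gen{\cbar}$ could grow. Given those conventions it is immediate.
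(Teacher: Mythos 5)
Your overall route is exactly the paper's: write the finitely presented ELA-field as $F_0^{ELA}$ with $F_0$ a finitely generated partial E-field, note $F_0 \strong F_0^{ELA}$ (Lemma~\ref{F strong in F^ELA}) and compose with the given strong extension to get $F_0 \strong F_1$, then let a generating tuple of $F_0$ witness ASP. The one point you flagged as delicate, however, is justified by a false claim. You assert that an element of $\cbar$ lies in $D(F_1)$ iff it lies in $D(F_0)$, ``because extensions restrict correctly''. The definition of a partial E-field extension only requires $D(F)$ to be carried into $D(F_1)$; it does not prevent the domain from growing on old elements, and indeed it must grow here: $F_1$ is a (total) E-field, so $D(F_1)=F_1$ and \emph{every} coordinate of $\cbar$ lies in $D(F_1)$. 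Consequently $D(\gen{\cbar}_{F_1})$ is the full $\Q$-span of $\cbar$, which can strictly contain $D(F_0)$ if some coordinates of $\cbar$ are outside $D(F_0)$, and then $\gen{\cbar}_{F_1}\neq\gen{\cbar}_{F_0}$ (it acquires new exponentials). This matters in principle, because enlarging the domain of a strong subfield by finitely many elements does not automatically preserve strongness -- that is the whole point of the hull in \S7. (The kernel-preserving convention you invoke is about the kernel, not about domains; there is no ``domain-preserving'' convention, and constructions like $F\mapsto F^e$ exist precisely to enlarge domains.)

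The fix is small and keeps your argument intact: choose the witness tuple inside $D(F_0)$. Since strongness of a partial E-subfield of $F_1$ depends only on its domain (the predimension $\delta(\xbar/\,\cdot\,)$ is computed from the domain and its exponentials), you may simply take $\cbar$ to be a $\Q$-basis of the finite-dimensional space $D(F_0)$; alternatively, under the paper's standing restriction to \egg\ partial E-fields, a finitely generated $F_0$ is already generated by a finite tuple from $D(F_0)$. With such a choice, $D(\gen{\cbar}_{F_1})=\Q\text{-span}(\cbar)=D(F_0)$, and the statement $\gen{\cbar}_{F_1}\strong F_1$ is literally the statement $F_0\strong F_1$, which you have already established by transitivity. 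With that substitution your proof is correct and coincides with the paper's.
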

\begin{proof}
If $F$ is a strong extension of a finitely presented ELA-field, then it is a strong extension of a finitely generated partial E-field $F_0$, and we can take $\cbar$ to be a generating tuple for $F_0$.
\end{proof}

\begin{example}
  Consider the exponential field $\C_{2^x} = \tuple{\C;+,\cdot,2^x}$. Then $\C_{2^x}$ does not satisfy the Schanuel property because $2^1 = 2$, but if Schanuel's conjecture is true then it does satisfy ASP.
\end{example}
ASP is a slight weakening of the Schanuel Property which allows for some extra generality such as this example, but such that the theory works almost unchanged.

\begin{lemma}\label{seac characterization}
 Suppose $F$ is an ELA-field. Then the following are equivalent.
 \begin{enumerate}
  \item[(1)] $F$ is strongly exponentially-algebraically closed.
  \item[(2)] For each $n \in \N$, every perfectly rotund subvariety $V \subs G^n(F)$, and every finitely generated subfield $A$ of $F$ over which $V$ is defined, there is $(\bbar,\exp(\bbar))$ in $F$, generic in $V$ over $A$.
  \item[(3)] For each $n \in \N$, every additively and multiplicatively free, rotund subvariety $V \subs G^n(F)$, and every finitely generated subfield $A$ of $F$ over which $V$ is defined, there are infinitely many distinct $(\bbar,\exp(\bbar))$ in $F$, generic in $V$ over $A$.
 \end{enumerate}
Furthermore, if $F$ satisfies ASP then the next three conditions are also equivalent to the first three.
\begin{enumerate}
 \item[(4)] For each $n \in \N$, every perfectly rotund subvariety $V \subs G^n(F)$, and every finitely generated ELA-subfield $A$ of $F$ over which $V$ is defined, there is $(\bbar,\exp(\bbar))$ in $F$, generic in $V$ over $A$.
\item[(5)] For each finitely-generated ELA-subfield $A$ of $F$, and each finitely generated exponentially-algebraic strong ELA-extension $A \strong B$, there is an embedding $B \into F$ fixing $A$.
\item[(6)] For each finitely-generated strong ELA-subfield $A \strong F$, and each simple exponentially-algebraic strong ELA-extension $A \strong B$, there is an embedding $B \into F$ (necessarily strong) fixing $A$.
\end{enumerate}
\end{lemma}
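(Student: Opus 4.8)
The plan is to run the cycle (1)~$\Rightarrow$~(3)~$\Rightarrow$~(2)~$\Rightarrow$~(1) for the first three conditions, and then, assuming ASP, to slot (4)--(6) in. Here (3)~$\Rightarrow$~(2) is immediate, since a perfectly rotund variety is in particular additively free, multiplicatively free and rotund, and ``infinitely many generic points'' certainly gives ``at least one''. For (2)~$\Rightarrow$~(1), take a finitely generated partial E-subfield $A$ of $F$ and a finitely generated kernel-preserving exponentially algebraic strong extension $A \strong B$, and choose a $\Q$-basis $\abar$ of $D(B)$ over $D(A)$ (enlarging the generating set if need be), so that $B$ is an algebraic field extension of $\gen{A,\abar}_B$. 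The locus $V \leteq \loc(\abar,e^\abar/A)$ is rotund by strongness, multiplicatively free by the kernel hypothesis, and of dimension $n = |\abar|$ by exponential algebraicity and Fact~\ref{etd fact}; after replacing $\abar$ by $\abar/N$ it is also Kummer-generic (Lemma~\ref{rootdet lemma}). Iterating the argument from the converse half of Proposition~\ref{simple = perf rotund}, we factor $A \strong \gen{A,\abar}_B$ as a finite tower of partial E-field extensions each given by a perfectly rotund variety defined over a finitely generated field; realizing these varieties one at a time in $F$ via hypothesis~(2), with Kummer-genericity guaranteeing that the coherent systems of roots extend the partial isomorphism at each stage, embeds $\gen{A,\abar}_B$ into $F$ over $A$, and the residual algebraic extension $B/\gen{A,\abar}_B$ embeds because $F$ is algebraically closed.

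For (1)~$\Rightarrow$~(3), let $V \subs G^n(F)$ be additively and multiplicatively free and rotund, defined over a finitely generated subfield $A$. If $\dim V = n$, let $A_0$ be the finitely generated partial E-subfield of $F$ spanned by a finite generating set of $A$, pick $(\abar,e^\abar)$ generic in $V$ over $A_0$ with any coherent system of roots, and make $B = \gen{A_0,\abar}$ into a partial E-field. Then $A_0 \strong B$ (rotundity), the extension is kernel-preserving (multiplicative freeness) and exponentially algebraic ($\dim V = n$), so (1) embeds $B$ into $F$ over $A_0$, yielding a point of $V$ generic over $A$; repeating over the finitely generated partial E-subfield generated by that point gives further, distinct, generic points, hence infinitely many. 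If $\dim V > n$, one first pads $V$ to a perfectly rotund variety in $G^{\dim V}(F)$ projecting dominantly onto $V$ --- take the graph of a sufficiently generic map $G^n \to G^{\dim V - n}$ --- and applies the previous case to it, so that the projections of its generic points give infinitely many generic points of $V$.

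Now suppose $F$ has ASP, witnessed by $\cbar$ with $\gen{\cbar}_F \strong F$. The implications (4)~$\Rightarrow$~(5)~$\Rightarrow$~(6) are routine: (5)~$\Rightarrow$~(6) since a simple exponentially algebraic strong ELA-extension is finitely generated, and (4)~$\Rightarrow$~(5) by decomposing a finitely generated exponentially algebraic strong ELA-extension of a finitely generated ELA-subfield of $F$ into simple pieces (Theorem~\ref{Jordan-Holder} and Proposition~\ref{simple = perf rotund}) and realizing each in turn by (4). So it remains to link (4) with (1); this is where ASP is used, and it is the main obstacle. The subfields $A$ in (4)--(6) are finitely generated only \emph{as ELA-fields}, hence in general of infinite transcendence degree, whereas (1)--(3) only supply genericity over \emph{finitely generated subfields}; one has to match these up. ASP does so: by Theorem~\ref{ELA-well-defined} the finitely generated ELA-subfield $A_\cbar \leteq \gen{\cbar}_F^{ELA}$ of $F$ satisfies $A_\cbar \strong F$, and consequently (absorbing the exponentially transcendental part of $A$ over $A_\cbar$, over which the remainder of $A$ is exponentially algebraic, and using Fact~\ref{etd fact} together with transitivity of strong extensions) every finitely generated ELA-subfield $A$ of $F$ sits inside a finitely generated \emph{strong} ELA-subfield $A^*$ of $F$. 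Given this, (1)~$\Rightarrow$~(4) follows: a perfectly rotund $V$ over $A \subs A^*$ has its field of definition in a finitely generated subfield $A_1$ of $A^*$, and since $A^* \strong F$ genericity over $A_1$ is genericity over $A^*$, so (1), applied to the partial E-field extension of the partial E-subfield of $F$ associated with $A_1$ determined by $V$, which is strong, supplies $(\bbar,e^\bbar)$ in $F$ generic in $V$ over $A$; and (4)~$\Rightarrow$~(1) is the same decompose-and-realize argument as (2)~$\Rightarrow$~(1), now carried out over finitely generated ELA-subfields of $F$ enlarged using ASP. Throughout, one checks routinely that additive and multiplicative freeness and Kummer-genericity of the relevant varieties survive the base changes, the padding, and the replacements $\abar \mapsto \abar/N$.
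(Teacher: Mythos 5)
Your overall architecture (cycle through (1)--(3), then hang (4)--(6) off it using ASP) is close to the paper's, and the easy implications ((3) $\Rightarrow$ (2), (4) $\Rightarrow$ (5) $\Rightarrow$ (6), the $\dim V = n$ case of (1) $\Rightarrow$ (3), and the passage from a finitely generated ELA-subfield $A$ to a finitely generated strong ELA-subfield $A^*\strong F$ via ASP) are fine. But the pivotal step of the ASP half --- exactly the ``matching up'' of genericity over finitely generated subfields with genericity over finitely generated ELA-subfields that you yourself identify as the main obstacle --- is dispatched with an invalid reason. You claim that, with $V$ defined over a finitely generated $A_1 \subs A^*$ and $A^* \strong F$, ``genericity over $A_1$ is genericity over $A^*$''. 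Strongness of $A^*$ in $F$ does not give this: it only yields $\delta(\bbar/A^*)\ge 0$ for the point produced, which is entirely compatible with $\td(\bbar,e^{\bbar}/A^*) < \dim V$, since $A^*$ has infinite transcendence degree over $A_1$ and nothing in your argument prevents the point supplied by (1) from being algebraically entangled with (or partly contained in) $A^*$. What makes the upgrade true is the finer structure: $A^*$ is the \emph{free} ELA-closure of a strong finitely generated partial E-field core (Theorem~\ref{ELA-well-defined}), $V$ is perfectly rotund and additively and multiplicatively free \emph{over $F$}, and one then needs either the amalgamation Lemma~\ref{order of extensions} (which is what the paper invokes at the corresponding point, its (2) $\Rightarrow$ (4)) or a $\delta$-counting chain argument of the kind spelled out in the classification of when $F|V \iso_F F|W$ and in the Claim inside Proposition~\ref{non-model-complete}. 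Without some such argument, your (1) $\Rightarrow$ (4), and hence the whole equivalence of (4)--(6) with (1)--(3), is not established.

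A second, more repairable, problem is the padding step in (1) $\Rightarrow$ (3) when $\dim V > n$. The graph of a generic map $G^n \to G^{\dim V - n}$ over $V$ is in general \emph{not} perfectly rotund: any $M \in \Mat_{n\cross n}(\Z)$ of intermediate rank with $\dim M\cdot V = \rk M$ (which mere rotundity of $V$ allows) persists, extended by zeros, as a matrix witnessing failure of the strict inequality for the graph. Your argument can survive this because your $\dim V = n$ case only used rotundity, additive and multiplicative freeness, and $\dim$ equal to the number of coordinate pairs, not perfect rotundity; but then rotundity and freeness of the graph for a ``sufficiently generic'' map, and the existence inside $F$ of the required generic parameters over $A$, still need an argument and are not routine base-change checks. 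The paper takes a different route here: it reduces to perfectly rotund varieties via the decomposition of finitely generated strong extensions into simple ones, and obtains the infinitude in (3) by re-applying (2) over the finitely generated field enlarged by the points already found.
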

\begin{proof}
  (1) $\iff$ (2) by Proposition~\ref{simple = perf rotund}. (3)
     $\implies$ (2) is trivial. To show (2) $\implies$ (3), first note
     that every finitely generated strong extension is the union of a
     chain of simple strong extensions, so to find a point in an
     additively and multiplicatively free rotund subvariety it is
     enough to find points in perfectly rotund subvarieties. Now we
     show by induction on $r \in \N$ that there are at least $r$ many
     such $\bbar$. The case $r=1$ is (2). Now suppose we have
     $\bbar_1,\ldots,\bbar_r$. Then by (2) there is a $\bbar_{r+1}$
     such that $(\bbar_{r+1},\exp(\bbar_{r+1}))$ is generic in $V$
     over $A \cup
     \{\bbar_1,\exp(\bbar_1),\ldots,\bbar_r,\exp(\bbar_r)\}$ In
     particular, $\bbar_1,\ldots,\bbar_{r+1}$ are distinct.

It is immediate that (4) implies (2), that (4) implies (5), and that (5) implies (6). We now assume that there is a finite $\cbar \strong F$. Assume (2), and let $A = \gen{\abar,\cbar}_F^{ELA}$ be a finitely generated ELA-subfield of $F$. Since $\cbar \strong F$, we may assume that $A \strong F$ by extending the tuple $\abar$ if necessary. By (2), there is $(\bbar,\exp(\bbar))$ in $F$, generic in $V$ over $\abar$. By Lemma~\ref{order of extensions}, the ELA-subfield $\gen{\abar,\bbar}^{ELA}_F$ of $F$ is isomorphic to $A|V$, and $(\bbar,\exp(\bbar))$ is generic in $V$ over $A$. Hence (4) holds.

Now assume (6), let $V$ be perfectly rotund, and let $A$ be a finitely-generated ELA-subfield over which $V$ is defined. Then there is a finitely-generated ELA-subfield $A'$ of $F$ containing $A$ and $\cbar$ such that $A' \strong F$. By (6), there is a realisation of $A'|V$ in $F$ over $A'$, say generated by $(\bbar,\exp(\bbar))$, generic in $V$ over $A'$. But then $(\bbar,\exp(\bbar))$ is generic in $V$ over $A$ as $A \subs A'$, hence (4) holds.
\end{proof}

\begin{prop}\label{etd determines extension}
  Let $F_0$ be a finitely generated partial E-field with full kernel (or a finitely presented ELA-field), and let $F_0 \strong F$ be a countable, kernel-preserving, \seac\ strong extension of $F_0$. Then $F$ is determined up to isomorphism as an extension of $F_0$ by the exponential transcendence degree $\etd(F/F_0)$. 
\end{prop}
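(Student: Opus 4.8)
The plan is to prove the stronger statement: any two countable kernel-preserving \seac\ strong extensions $F_0 \strong F$ and $F_0 \strong F'$ with $\etd(F/F_0) = \etd(F'/F_0)$ are isomorphic over $F_0$, the isomorphism being built by a back-and-forth. Let $F_0^\circ$ be a finitely generated partial E-field underlying $F_0$ --- either $F_0$ itself, or, when $F_0$ is a finitely presented ELA-field, a finitely generated partial E-field with $(F_0^\circ)^{ELA} = F_0$. Since $F_0^\circ \strong F$ and $F_0^\circ \strong F'$, both $F$ and $F'$ satisfy ASP, witnessed by a generating tuple of $F_0^\circ$; in particular conditions $(1)$ and $(6)$ of Lemma~\ref{seac characterization} are equivalent in each. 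By Theorem~\ref{ELA-well-defined} the ELA-subfields $A_0 := \gen{F_0^\circ}_F^{ELA}$ and $A_0' := \gen{F_0^\circ}_{F'}^{ELA}$ are each isomorphic over $F_0^\circ$ to $(F_0^\circ)^{ELA}$ and are strong in $F$ and $F'$; fix an isomorphism $\theta_0 : A_0 \iso A_0'$ over $F_0$. The back-and-forth states are triples $(A,A',\theta)$ with $A_0 \subseteq A \strong F$, $A_0' \subseteq A' \strong F'$ finitely generated ELA-subfields and $\theta : A \iso A'$ an isomorphism over $F_0$; the initial state is $(A_0,A_0',\theta_0)$.

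For the forth step, given a state $(A,A',\theta)$ and an element $f \in F$, I would first take a self-sufficient hull: choosing a finite tuple $\bar{a}$ from $F$ with $f$ among its entries that minimises the non-negative integer $\delta(\bar{a}/A)$ yields $\gen{A,\bar{a}}_F \strong F$, so by Theorem~\ref{ELA-well-defined} $B := \gen{A,\bar{a}}_F^{ELA}$ is a well-defined finitely generated ELA-field with $A \strong B \strong F$ and $f \in B$. By Lemma~\ref{order of extensions} together with Proposition~\ref{simple = perf rotund}, Lemma~\ref{unique generic extensions} and the discussion preceding Theorem~\ref{Jordan-Holder}, the extension $A \strong B$ factors as a finite chain of simple strong steps $A = B_0 \strong B_1 \strong \cdots \strong B_k = B$ with each $B_j \strong B$ (hence $\strong F$), where each step is either exponentially transcendental, $B_{j+1} \iso_{B_j} B_j|G$, or exponentially algebraic, $B_{j+1} \iso_{B_j} B_j|V_j$ with $V_j$ perfectly rotund over $B_j$. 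I would then build a parallel chain $A' = B_0' \strong \cdots \strong B_k' = B'$ inside $F'$ with isomorphisms $\psi_j : B_j \iso B_j'$ extending $\theta$ and each $B_j' \strong F'$. At an exponentially algebraic step: transport $V_j$ along $\psi_j$ to a perfectly rotund variety $V_j'$ over $B_j'$ and apply Lemma~\ref{seac characterization}$(6)$ to realise $B_j'|V_j'$ as a necessarily strong ELA-subfield $B_{j+1}'$ of $F'$ over $B_j'$, with $B_{j+1}' \iso_{B_j'} B_j'|V_j' \iso B_j|V_j = B_{j+1}$. At an exponentially transcendental step: first note
\[ \etd(F'/B_j') \;=\; \etd(F/B_j) \;\ge\; \etd(B/B_j) \;\ge\; 1, \]
using that $\theta$ and $\psi_j$ are over $F_0$, additivity of exponential transcendence degree over the strong subfields $F_0 \strong B_j \strong F$, the hypothesis $\etd(F/F_0) = \etd(F'/F_0)$, and that the step $B_j \strong B_{j+1}$ alone contributes $1$ to $\etd(B/B_j)$; hence there is $a' \in F'$ exponentially transcendental over $B_j'$, for which $\gen{B_j',a'}_{F'} \strong F'$ automatically (cf.\ the proof of Lemma~\ref{unique generic extensions}), so $B_{j+1}' := \gen{B_j',a'}_{F'}^{ELA}$ is strong in $F'$ and, as $\td(a',e^{a'}/B_j') = 2$, isomorphic over $B_j'$ to $B_j'|G \iso B_j|G = B_{j+1}$. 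The new state is $(B,B',\psi_k)$.

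The back step is the same with $F$ and $F'$ interchanged, and running the construction along enumerations of $F$ and $F'$ gives, in the union, the desired isomorphism $F \iso F'$ over $F_0$. I expect the exponentially transcendental step to be the crux: it is the only place where equality of the exponential transcendence degrees is genuinely used --- via additivity of exponential transcendence degree over strong subfields --- and it is this bookkeeping that compels us to keep every intermediate field strong in the ambient field, hence to pass to self-sufficient hulls and to decompose $A \strong B$ into \emph{simple strong} extensions (as in the chain provided by the material around Theorem~\ref{Jordan-Holder}) rather than into arbitrary ones. When $\etd(F/F_0)$ is infinite the room needed at each such step is automatic, so only the finite case is delicate. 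The remaining essential use of the hypotheses is the appeal to Lemma~\ref{seac characterization}$(6)$ at the exponentially algebraic steps, which requires $F$ and $F'$ to have ASP, and hence requires $F_0$ to be finitely generated (respectively finitely presented).
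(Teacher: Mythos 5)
Your proposal is correct, and it gets to the conclusion by a more hands-on route than the paper. The paper packages the uniqueness inside general machinery: it verifies that the countable strong kernel-preserving ELA-extensions of $F_0$ form an $\aleph_0$-amalgamation category (Theorem~\ref{aleph0-stability} for the countability conditions, Lemma~\ref{order of extensions} for amalgamation), invokes the Fra\"iss\'e theorem of \cite[Theorem~2.18]{TEDESV} to get a unique saturated object, identifies saturation with being \seac\ of infinite exponential transcendence degree via Lemma~\ref{seac characterization}(6), and then reduces the case $\etd(F/F_0)=n<\aleph_0$ to $\etd=0$ by splitting off an exponential transcendence base (so $F_1\iso_{F_0}F_0|G^n$) and repeating the argument in the subcategory of exponentially algebraic extensions, whose saturated object is $\EAC{F_0}$. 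You instead unroll the back-and-forth explicitly: hulls to stay strong, the decomposition of a finitely generated strong ELA-extension into exponentially transcendental steps and perfectly rotund steps, Lemma~\ref{seac characterization}(6) (with ASP supplied by $F_0$) at the algebraic steps, and additivity of $\etd$ over strong subfields at the transcendental steps, which lets you treat finite and infinite $\etd(F/F_0)$ uniformly, avoids the external Fra\"iss\'e citation, and does not need the count of isomorphism types from Theorem~\ref{aleph0-stability} (that input is only needed for existence of the limit, not for the uniqueness being proved here). What the paper's framing buys in exchange is the reusable saturation/homogeneity of the category $\Cat(F_0)$, which is exploited again in Proposition~\ref{extending autos} and in \S9; your argument delivers exactly the stated proposition. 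If you write it up, two small points should be made explicit: first, that each link of your chain is strong in the ambient field at the level of partial E-fields --- e.g.\ $\gen{B_j,\abar_j}_F$ with $(\abar_j,e^{\abar_j})$ generic in the perfectly rotund $V_j$ satisfies $\delta(\abar_j/B_j)=0$, so $B_j\strong F$ gives $\gen{B_j,\abar_j}_F\strong F$, which is what allows Theorem~\ref{ELA-well-defined} to identify $B_{j+1}$ with $B_j|V_j$ over $B_j$; and second, that $\etd(B_j/F_0)$ computed inside $B_j$, inside $F$, and (via $\psi_j$) inside $F'$ all agree, which uses strongness of the intermediate fields and the invariance of exponential algebraicity under strong extensions from \cite{EAEF}. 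Both are routine and your hypotheses put them in reach.
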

\begin{proof}
Suppose $F_0$ is as above and let $\Cat(F_0)$ be the category of all countable strong kernel-preserving ELA-extensions of $F_0$, with strong embeddings fixing $F_0$ as the morphisms. Let $\Cat^{<\aleph_0}(F_0)$ be the full subcategory of finitely generated ELA-extensions of $F_0$. Then $\Cat(F_0)$ is an $\aleph_0$-amalgamation category, that is:
\begin{itemize}
  \item Every arrow is a monomorphism;
  \item $\Cat_0(F)$ has unions of $\omega$-chains (by Lemma~\ref{strong lemma});
  \item $\Cat^{<\aleph_0}(F_0)$ has only countably many objects up to isomorphism (by Theorem~\ref{aleph0-stability});
  \item For each $A \in \Cat^{<\aleph_0}(F_0)$, there are only countably many extensions of $A$ in $\Cat^{<\aleph_0}(F_0)$ up to isomorphism (also by Theorem~\ref{aleph0-stability});
  \item $\Cat^{<\aleph_0}(F_0)$ has the amalgamation property (by Lemma~\ref{order of extensions}); and
  \item $\Cat^{<\aleph_0}(F_0)$ has the joint embedding property (since $F_0^{ELA}$ embeds in all of the strong ELA-extensions of $F_0$, by Theorem~\ref{ELA-well-defined}).
\end{itemize}  
 Thus by the Fra\"iss\'e amalgamation theorem, specifically the version in \cite[Theorem~2.18]{TEDESV}, there is a unique extension $F_0 \strong F$ in $\Cat(F_0)$ which is $\Cat^{<\aleph_0}(F_0)$-saturated, that is, such that for any finitely generated ELA-extension $A$ of $F_0$ inside $F$, and any finitely generated strong ELA extension $A \strong B$, there is a (necessarily strong) embedding of $B$ into $F$ over $A$. Using part (6) of Lemma~\ref{seac characterization}, this property is the same as being \seac\ together with $\etd(F/F_0)$ being infinite. Thus the proposition is proved in the case where $\etd(F/F_0) = \aleph_0$.
 
Now suppose $F_0 \strong F$ is as in the proposition with $\etd(F/F_0) = n \in \N$. Let $a_1,\ldots,a_n$ be an exponential transcendence base for $F$ over $F_0$, and let $F_1 = \gen{F_0,a_1,\ldots,a_n}^{ELA}_F$. Then $F_1 \iso_{F_0} F_0|G^n$, and $\etd(F/F_1) = 0$. So it is enough to consider the case $\etd(F/F_0) = 0$. Let $\Cat_0(F_0)$ be the subcategory of $\Cat(F_0)$ consisting of the exponentially-algebraic extensions. The same argument as above shows that $\Cat(F_0)$ is an $\aleph_0$-amalgamation category, and we deduce that up to isomorphism there is a unique countable, kernel-preserving, \seac\ strong extension of $F_0$, which of course is $\EAC{F_0}$.
\end{proof}

\begin{cor}
The countable pseudo-exponential fields are precisely $B_\kappa$ for $\kappa$ a countable cardinal. 
\end{cor}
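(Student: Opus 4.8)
The plan is to derive the corollary from Proposition~\ref{etd determines extension}, taking the base field there to be $F_0 = SK$. Note first that $SK = CK_\tau$ (with $\tau$ transcendental) is a finitely generated partial E-field, namely $\gen{\tau}_{SK}$, and that it has full kernel, since the $\exp(\tau/m)$ form a coherent system of primitive $m\th$ roots of unity and hence $I(SK)$ is the group $\mu$ of all roots of unity. Thus $SK$ is a legitimate choice of base in that proposition, and $SK^{ELA}$ and $\EAC{SK}$ are well-defined.

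Next I would check that every countable pseudo-exponential field $F$ arises as a countable, kernel-preserving, strong, \seac\ extension of a copy of $SK$. Since $F$ has standard kernel, $\ker(\exp_F) = \Z\tau$ with $\tau$ transcendental over $\Q$; as $\exp(\tau/m)$ then has multiplicative order exactly $m$, the partial E-subfield of $F$ generated by $\tau$ is (isomorphic to) $SK$, and this inclusion adds no kernel. The Schanuel property says precisely that $\delta(\xbar/\emptyset) \ge 0$ for every finite tuple $\xbar$ from $F$; since $\delta(\tau/\emptyset) = \td(\tau,e^\tau/\Q) - \ldim_\Q(\tau) = 1-1 = 0$, the Addition Property of Lemma~\ref{strong lemma} gives $\delta(\xbar/D(SK)) = \delta(\xbar/\tau) = \delta(\xbar,\tau/\emptyset) \ge 0$, so $SK \strong F$. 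And $F$ is \seac\ by definition (condition (5) of the definition of a pseudo-exponential field plays no role in this direction). Hence Proposition~\ref{etd determines extension} applies and $F$ is determined up to isomorphism over $SK$ by $\etd(F/SK)$. Because $\tau$ is exponentially algebraic over the empty set (by Fact~\ref{etd fact}, $\etd(\tau) \le \delta(\tau/\emptyset) = 0$) and $SK$ is algebraic over $\tau$ together with roots of unity, we get $\ecl^F(SK) = \ecl^F(\emptyset)$ and therefore $\etd(F/SK) = \etd(F)$, a countable cardinal $\kappa$ since $F$ is countable.

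On the other hand, for any countable $\kappa$ the field $B_\kappa$ is also a countable, kernel-preserving, strong, \seac\ extension of $SK$: it is built from $SK^{ELA}$ by iterating the operations $X \mapsto X|G$ and $X \mapsto \EAC{X}$, each of which is a strong, kernel-preserving extension preserving countability (by Lemma~\ref{F strong in F^ELA} and Theorem~\ref{EA-closures}), and its terminal stage is \seac\ by Construction~\ref{pseudoexp}; moreover $\etd(B_\kappa/SK) = \etd(B_\kappa) = \kappa$ by the same reasoning as above. So if $F$ is a countable pseudo-exponential field with $\etd(F) = \kappa$, then Proposition~\ref{etd determines extension} gives $F \iso_{SK} B_\kappa$, in particular $F \iso B_\kappa$ as exponential fields. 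Conversely, each $B_\kappa$ with $\kappa$ countable is a countable pseudo-exponential field by Construction~\ref{pseudoexp}, and distinct values of $\kappa$ yield non-isomorphic fields because $\etd$ is an isomorphism invariant. This establishes the classification.

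There is no serious obstacle here: essentially all of the content is already packaged in Proposition~\ref{etd determines extension} and the results feeding into it, and what remains is the routine verification that a pseudo-exponential field meets its hypotheses over the base $SK$. The only slightly delicate point is the passage from the Schanuel property to $SK \strong F$, which relies on the Addition Property of $\delta$ and on the fact that $SK$ lies in the exponential-algebraic closure of $\emptyset$, so that replacing $\delta(\cdot/\emptyset)$ by $\delta(\cdot/D(SK))$ changes nothing.
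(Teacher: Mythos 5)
Your proof is correct and follows the same route as the paper, whose entire proof is the one-line observation that pseudo-exponential fields are kernel-preserving, strongly exponentially-algebraically closed strong extensions of $SK$, so that Proposition~\ref{etd determines extension} applies. You have simply spelled out the verifications (Schanuel property giving $SK \strong F$ via the Addition Property, $\etd(F/SK)=\etd(F)$, and the corresponding properties of the $B_\kappa$) that the paper leaves implicit.
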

\begin{proof}
The pseudo-exponential fields are all kernel-preserving \seac\ strong extensions of $SK$.
\end{proof}

%One can show that the $B_\kappa$ for countable $\kappa$ are the only countable pseudo-exponential fields by a back-and-forth argument. For example, one can easily verify using the results of this paper that the class of pseudo-exponential fields satisfies axioms 0, I, and II of quasiminimal excellent classes as given in \cite{OQMEC}, and apply Corollary~2.2 of that paper. (In fact that also shows that $B_{\aleph_1}$ is well-defined.)

From the proof of Proposition~\ref{etd determines extension} one can see that much of the machinery of $\aleph_0$-stable first-order theories can be brought to bear on the category $\Cat(F_0)$ for any finitely presented ELA-field $F_0$. Indeed, the \seac\ kernel-preserving strong extensions of $F_0$ (at least those with the countable closure property) should be thought of as analogous to the algebraically closed pure field extensions of a finitely generated field. They are the ``universal domains'' which are saturated and $\aleph_0$-homogeneous for the category $\Cat(F_0)$. Of course this is not saturation nor $\aleph_0$-stability in the sense of first-order model theory, because we are only considering extensions which do not extend the kernel. Also the $\aleph_0$-stability is with respect to counting types over strong ELA-subfields of $F$, not over arbitrary subsets. Developing the homogeneity property further, we make some observations about extending automorphisms.

\begin{prop}\label{extending autos} Suppose that $F$ is a partial E-field with full kernel, which is finitely generated or a finitely generated extension of a countable LA-field, and that $\sigma$ is an automorphism of $F$. Then:
 \begin{enumerate}
  \item $\sigma$ extends uniquely to an automorphism of $F^E$.
  \item $\sigma$ extends to automorphisms of $F^{EA}$, $F^{ELA}$, and to any countable \seac\ kernel-preserving strong extension of $F$, including $\EAC{F}$.
 \end{enumerate}
\end{prop}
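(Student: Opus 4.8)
The overall plan is to extend $\sigma$ one rung at a time up the chains defining each of these fields, invoking at each rung the appropriate uniqueness statement already proved. \emph{For (1):} the construction $F \mapsto F^e$ is functorial on isomorphisms — the same ``straightforward calculation'' that shows $F^e$ is independent of the choices also shows that any isomorphism of partial E-fields lifts to an isomorphism of the free E-extensions — so $\sigma\colon F \to F$ lifts to some automorphism of $F^e$. The lift is unique because $F^e$ has no nontrivial E-field automorphism fixing $F$ pointwise: such an automorphism fixes $D(F^e) = F$ and, commuting with $\exp$, fixes every $\exp(b_i/m)$, i.e.\ every generator $c_{i,m}$ of $F^e$ over $F$; for the same reason $F^E$ has no nontrivial E-automorphism over $F$. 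Hence $\sigma$ extends to $F^e$, and, applying the same to $F^e \into F^{ee}$, and so on, uniquely up the chain $F \into F^e \into F^{ee} \into \cdots$; these extensions agree by uniqueness, so their union is the unique extension of $\sigma$ to $F^E$.

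\emph{For (2), $F^{EA}$ and $F^{ELA}$:} along the chain $F \into F^e \into F^{ea} \into F^{eae} \into \cdots$, at each ``$e$''-rung $\sigma$ extends uniquely as above, while at each ``$a$''-rung the field is replaced by its algebraic closure with $D$ unchanged, so the current automorphism extends (by the standard fact that field isomorphisms extend to algebraic closures), automatically as an E-field automorphism; here a choice is made — the ambiguity being an element of the absolute Galois group — so the resulting extension to $F^{EA}$ is not unique. For $F^{ELA}$, observe that the proof of Theorem~\ref{ELA-well-defined} establishes slightly more than is stated: running its back-and-forth with $\theta_0 = \theta$ in place of the identity shows that any isomorphism $\theta\colon F \to F'$ of partial E-fields satisfying the hypotheses of that theorem extends to an isomorphism $\gen{F}^{ELA}_K \to \gen{F'}^{ELA}_M$ for all strong, kernel-preserving ELA-extensions $F \strong K$ and $F' \strong M$. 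Applying this with $F' = F$, $\theta = \sigma$, and $K = M = F^{ELA}$ — which is a strong, kernel-preserving ELA-extension of $F$ by Lemma~\ref{F strong in F^ELA} and Construction~\ref{ELA-construction}, with $\gen{F}^{ELA}_{F^{ELA}} = F^{ELA}$ — gives an automorphism of $F^{ELA}$ extending $\sigma$.

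\emph{For (2), \seac\ extensions:} let $F \strong K$ with $K$ countable, \seac, and kernel-preserving. First extend $\sigma$ to an automorphism $\bar\sigma$ of $F^{ELA}$ as above; since $F \strong K$ factors as $F \strong F^{ELA} \strong K$ by Theorem~\ref{ELA-well-defined}(2), it suffices to extend $\bar\sigma$ to $K$. Now run a back-and-forth against $K$: using countability, build an increasing chain of partial isomorphisms $\theta_m\colon A_m \to B_m$ between ELA-subfields of $K$ of the form $\gen{F^{ELA},\bar\gamma}^{ELA}_K$ for finite $\bar\gamma$, each strong in $K$, with each $\theta_m$ restricting to $\bar\sigma$ on $F^{ELA}$ and with the $m$-th element of an enumeration of $K$ forced into both domain and range by stage $m$. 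The extension step — enlarging $A_m$ by an element $s$ of $K$ — is carried out by picking a finitely generated $A'$ with $s \in A'$ and $A_m \strong A' \strong K$, decomposing $A_m \strong A'$ into simple strong ELA-extensions (each perfectly rotund, or exponentially transcendental, by Proposition~\ref{simple = perf rotund} and Lemma~\ref{unique generic extensions}), transporting the defining varieties along $\theta_m$, and realising the transported extension inside $K$ over $B_m$: the perfectly rotund factors are realised because $K$ is \seac\ (Lemma~\ref{seac characterization}(3), which needs no ASP hypothesis, together with the strongness of $B_m$ in $K$), and the exponentially transcendental factors — of which the total number cannot exceed $\etd(K/F)$ — are realised by exponentially transcendental elements of $K$ over $F^{ELA}$, which suffice by the uniqueness of the simple exponentially transcendental extension (Lemma~\ref{unique generic extensions}). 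Then $\theta_{m+1}$ is the isomorphism matching the two realisations, well-defined by Theorem~\ref{ELA-well-defined}; the union of the $\theta_m$ is an automorphism of $K$ extending $\sigma$, and $K = \EAC{F}$ gives that case. (When $F$ is itself a finitely generated partial E-field one may shortcut this: $(K,\mathrm{incl})$ and $(K,\sigma)$ present the same field as two \seac\ kernel-preserving strong extensions of $F$ of equal exponential transcendence degree, so the uniqueness clause of Proposition~\ref{etd determines extension} directly yields an automorphism of $K$ restricting to $\sigma$ on $F$.)

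The genuine work is the back-and-forth of the last paragraph: verifying the extension step — essentially the observation, packaged in Lemma~\ref{seac characterization}, that \seac-ness lets one realise any simple exponentially-algebraic strong ELA-extension over a finitely generated strong ELA-subfield — while keeping all intermediate fields finitely generated over $F^{ELA}$ and strong in $K$, and treating the finite- and infinite-$\etd$ cases uniformly. The rest reduces to the functoriality and rigidity of $(-)^e$, the standard extension of isomorphisms to algebraic closures, and the mild sharpening of Theorem~\ref{ELA-well-defined} to arbitrary base isomorphisms.
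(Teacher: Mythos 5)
Your treatment of part (1), of $F^{EA}$ and $F^{ELA}$, and your parenthetical shortcut for the \seac\ case when $F$ is finitely generated all match the paper's argument in substance: the paper extends $\sigma$ to $F^e$ by sending $c_{i,n} \mapsto \exp(\sigma(b_i)/n)$, obtains $\bar\sigma$ on $F^{ELA}$ by applying Theorem~\ref{ELA-well-defined} to the two extensions $\ra{F}{\theta}{F^{ELA}}$ and $\ra{F}{\theta\circ\sigma}{F^{ELA}}$ (the same content as your ``run the back-and-forth with $\theta_0=\sigma$'' sharpening), restricts $\bar\sigma$ to $F^{EA}$, and then invokes the $\Cat^{<\aleph_0}(F_0)$- and $\Cat_0^{<\aleph_0}(F_0)$-saturation properties from Proposition~\ref{etd determines extension} for the \seac\ extensions --- which is exactly your shortcut.

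The genuine gap is in the extension step of your hand-rolled back-and-forth for a general countable \seac\ kernel-preserving strong extension $K$. Your fields $B_m$ contain $F^{ELA}$, so they are countable but not finitely generated (neither as fields nor, unless $F$ itself is finitely generated, as ELA-fields). To transport a simple extension you need a point of the transported perfectly rotund variety $W$ generic over $B_m$; but Lemma~\ref{seac characterization}(2)/(3) only produce points generic over \emph{finitely generated subfields}. Strongness of $B_m$ in $K$ upgrades a realisation to a generic one only when its additive coordinates are $\Q$-linearly independent over $B_m$ (then $\delta \ge 0$ forces $\td = \dim W$); it does not produce such a realisation, and nothing you cite rules out that every point of $W(K)$ generic over a finitely generated piece satisfies a $\Q$-linear relation with constants lying elsewhere in $B_m$. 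The passage from ``generic over finitely generated subfields'' to ``generic over a countable strong ELA-subfield'' is precisely the content of parts (4)--(6) of Lemma~\ref{seac characterization}, which require ASP and require the subfield to be finitely generated as an ELA-field; so your parenthetical ``(3), which needs no ASP hypothesis'' is exactly where the argument breaks. When $F$ is finitely generated this is repairable (ASP holds because $F \strong K$, and $B_m$ is then a finitely generated ELA-subfield, so (4)/(6) apply) --- but in that case your own shortcut via Proposition~\ref{etd determines extension} already does the job and is the paper's route. In the remaining case, $F$ a finitely generated extension of a countable LA-field, the step needs the saturation property of $K$ furnished by the Fra\"iss\'e analysis of Proposition~\ref{etd determines extension} carried out over the base $F^{ELA}$, which is what the paper appeals to, not Lemma~\ref{seac characterization}(3).
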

\begin{proof}
To extend an automorphism $\sigma$ of $F$ to an automorphism of $F^e$ we must have $\sigma(c_{i,n}) = \exp(\sigma(b_i)/n)$, in the notation of Construction~\ref{F^e construction}. This does define a partial automorphism since the $c_i$ are algebraically independent over $F$, and it extends uniquely to an automorphism of $F^e$ because $F^e$ is generated over $F$ by the $c_{i,n}$. Thus, by induction, $\sigma$ extends uniquely to an automorphism of $F^E$.

We have an extension $F \rTo^{\theta} F^{ELA}$ where $\theta$ is the inclusion map, and a second extension $F \rTo^{\theta \circ \sigma} F^{ELA}$. The partial E-field $F$ satisfies the hypothesis of Theorem~\ref{ELA-well-defined}, so by that theorem there is a map $F^{ELA} \rTo^{\bar{\sigma}} F^{ELA}$ which restricts to $\sigma$ on $F$. The image of $\bar{\sigma}$ is an ELA-subfield of $F^{ELA}$ containing $F$, so must be all of $F^{ELA}$. Hence $\bar{\sigma}$ is an automorphism of $F^{ELA}$ extending $\sigma$. The restriction of $\bar{\sigma}$ to $F^{EA}$ is an automorphism of $F^{EA}$ extending $\sigma$. Similarly, we can use the $\Cat^{<\aleph_0}(F_0)$-saturation and $\Cat_0^{<\aleph_0}(F_0)$-saturation properties to extend $\bar{\sigma}$ from $F^{ELA}$ to an automorphism of $\EAC{F}$ or of another countable \seac\ kernel-preserving strong extension of $F$.
\end{proof}

The partial E-field $SK$ embeds in $\Cexp$, so the restriction, $\sigma_0$, of complex conjugation is an automorphism of $SK$, and it is easy to see that it and the identity map are the only automorphisms of $SK$. By Proposition~\ref{extending autos}, $\sigma_0$ extends to automorphisms of $B_\kappa$ for any countable $\kappa$, and in \cite{KMO10}, these extensions of $\sigma_0$ are used to identify the algebraic numbers which are pointwise definable in pseudo-exponential fields. However, the extensions of $\sigma_0$ are far from being unique, so this argument does not give an analogue of complex conjugation on any $B_\kappa$.

\section{Non-model completeness}
In this section we show that the $B_\kappa$, and other \seac\ E-fields, are not model complete. We use the submodularity property of $\delta$ which is well-known from Hrushovski's amalgamation constructions, and some simple consequences.

\begin{lemma}[Submodularity] 
Let $F$ be a partial E-field, and let $C,X,Y$ be \Q-subspaces of $D(F)$ such that $C \subs X \cap Y$ and $\ldim_\Q(X\cup Y/C)$ is finite. Let $\xbar,\ybar,\zbar$ be finite tuples such that $\xbar \cup C$ spans $X$, $\ybar \cup C$ spans $Y$, and $\zbar \cup C$ spans $X \cap Y$. Then
\begin{equation}
\delta(\xbar \cup\ybar/C) + \delta(\zbar/C) \le \delta(\xbar/C) + \delta(\ybar/C) 
\end{equation}
More prosaically, the predimension function $\delta(-/C)$ is submodular on the lattice of $\Q$-linear subspaces of $D(F)$.
We write 
\[\delta(XY/C) + \delta(X \cap Y/C) \le \delta(X/C) + \delta(Y/C).\]
\end{lemma}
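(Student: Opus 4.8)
The plan is to exhibit $\delta(-/C)$ as the difference of a submodular function and a modular function on the lattice of $\Q$-subspaces of $D(F)$ lying between $C$ and $X+Y$. For such a subspace $V$, choose a finite tuple $\bar v$ with $\bar v\cup C$ spanning $V$, and set $t(V)=\td(\bar v,\exp(\bar v)/C,\exp(C))$ and $\ell(V)=\ldim_\Q(\bar v/C)$, so that $\delta(\bar v/C)=t(V)-\ell(V)$. A routine check shows $t(V)$ does not depend on the choice of $\bar v$: two spanning tuples generate the same field over $\Q(C,\exp(C))$ up to an algebraic extension, because the exponential of a $\Q$-linear combination of one tuple over $C$ is a root of a product of integer powers of the $\exp$-values in the other (this is exactly the phenomenon of rational versus integer coefficients). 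Now $\ell$ is modular by the Grassmann dimension formula, $\ell(X)+\ell(Y)=\ell(X+Y)+\ell(X\cap Y)$, so it suffices to prove that $t$ is submodular, i.e. $t(X+Y)+t(X\cap Y)\le t(X)+t(Y)$; subtracting the $\ell$-identity then gives the stated inequality, using that $\xbar\cup\ybar\cup C$ spans $X+Y$ (which holds since $C\subseteq X\cap Y$) and $\zbar\cup C$ spans $X\cap Y$ by hypothesis.

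For submodularity of $t$, I would first record the submodularity of transcendence degree itself: if $P,Q$ are subfields of a common field, both containing a subfield $L_0$, then $\td(PQ/L_0)+\td(P\cap Q/L_0)\le\td(P/L_0)+\td(Q/L_0)$; this follows from $\td(PQ/Q)\le\td(P/(P\cap Q))$ (a transcendence basis of $P$ over $P\cap Q$ generates $PQ$ over $Q$ up to an algebraic extension) together with additivity of transcendence degree in towers. The natural choices are $L_0=\Q(C,\exp(C))$, $P=\Q(C,\exp(C),\xbar,\exp(\xbar))$, $Q=\Q(C,\exp(C),\ybar,\exp(\ybar))$, for which $PQ$ is the field computing $t(X+Y)$, while $P$ and $Q$ compute $t(X)$ and $t(Y)$ respectively (again, up to an algebraic extension, since $X$ is spanned by $\xbar\cup C$ and $\exp(X)$ is algebraic over $\Q(\exp(\xbar),\exp(C))$, etc.).

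The one delicate point, which I expect to be the main obstacle, is that $P\cap Q$ need not contain $\exp(\zbar)$: each $z_i$ lies in the $\Q$-span of $\xbar\cup C$, so only a positive integer multiple $m_i z_i$ is a $\Z$-combination, whence $\exp(z_i)$ is merely an $m_i$-th root of an element of $\Q(\exp(\xbar),\exp(C))$, algebraic over $P$ but possibly not an element of it. To repair this I would pass to $P'=P(\exp(\zbar))$ and $Q'=Q(\exp(\zbar))$. Since $z_i\in X\cap Y$, it is also a $\Q$-combination of $\ybar\cup C$, so $\exp(z_i)$ is algebraic over $Q$ as well; hence $P'/P$ and $Q'/Q$ are algebraic extensions and none of the transcendence degrees over $L_0$ change on passing to primes, while now $\Q(C,\exp(C),\zbar,\exp(\zbar))\subseteq P'\cap Q'$, so $\td(P'\cap Q'/L_0)\ge t(X\cap Y)$ and also $\td(P'Q'/L_0)=\td(PQ/L_0)=t(X+Y)$. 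Applying the transcendence-degree submodularity to $P'$ and $Q'$ and translating back then yields $t(X+Y)+t(X\cap Y)\le t(X)+t(Y)$, completing the argument; everything apart from this bookkeeping with $\exp(\zbar)$ is the standard modularity/submodularity calculus.
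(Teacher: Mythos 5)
Your proposal is correct and takes essentially the same route as the paper, whose proof simply observes that inequality (1) holds when $\delta$ is replaced by $\td$ (submodularity of transcendence degree) and holds with equality when $\delta$ is replaced by $\ldim_\Q$ (modularity of linear dimension), then subtracts. Your careful handling of $\exp(\zbar)$ via the algebraic extensions $P'$ and $Q'$ just fills in the field-theoretic detail that the paper leaves implicit.
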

\begin{proof}
Note that if $\delta$ is replaced by $\td$ then (1) holds, and if $\delta$ is replaced by $\ldim_\Q$ then it holds with $\le$ replaced by $=$. Subtracting the latter from the former gives (1).
\end{proof}

\begin{lemma}\label{hull lemma}
  Suppose $F$ is a partial E-field, and $C \strong F$. For each finite tuple $\abar$ from $F$, there is a smallest $\Q$-vector subspace $\hull{C,\abar}_F$ of $F$ containing $\abar$ and $C$, called the \emph{hull} of $C \cup\abar$, such that $\hull{C,\abar}_F \strong F$. Furthermore, $\hull{C,\abar}_F$ is finite-dimensional as an extension of $C$.
\end{lemma}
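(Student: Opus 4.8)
The plan is to reduce the statement to the domain $D(F)$, use the hypothesis $C \strong F$ to bound the predimension $\delta(-/C)$ from below, and then invoke submodularity to show that the $\Q$-subspaces on which $\delta(-/C)$ attains its minimum are closed under intersection, so that a smallest one exists; that smallest one, enlarged by $\abar$, will be the hull.

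First I would dispose of the bookkeeping. For any $\Q$-subspace $B$ of $F$ we have $D(\gen{B}_F) = B \cap D(F)$, so whether $\gen{B}_F \strong F$ depends only on $B \cap D(F)$. Write $C$ also for the $\Q$-subspace $D(\gen{C}_F) \subs D(F)$, and put $X = \gen{C \cup \abar}_\Q \cap D(F)$; this is a $\Q$-subspace of $D(F)$ containing $C$ and finite-dimensional over $C$, since $\abar$ is finite. It then suffices to produce a smallest $\Q$-subspace $\hull{X}$ of $D(F)$ with $X \subs \hull{X}$ and $\hull{X} \strong F$: one checks that $\gen{C \cup \abar}_\Q + \hull{X}$ meets $D(F)$ exactly in $\hull{X}$, so this space is strong, contains $C \cup \abar$, and is contained in every strong $\Q$-subspace of $F$ containing $C \cup \abar$; hence it is the required $\hull{C,\abar}_F$, and it is finite-dimensional over $C$ because both $\gen{C \cup \abar}_\Q/C$ and $\hull{X}/C$ are.

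Now the core. Since $C \strong F$, for every finite-dimensional $\Q$-subspace $Y$ of $D(F)$ with $C \subs Y$ we have $\delta(Y/C) \ge 0$ (here $\delta(Y/C)$ means $\delta(\ybar/C)$ for any tuple $\ybar$ spanning $Y$ over $C$; this is well-defined and satisfies the addition property of Lemma~\ref{strong lemma}(1)). Hence
\[d \leteq \min\class{\delta(Y/C)}{X \subs Y \subs D(F),\ \ldim_\Q(Y/C) < \infty}\]
exists, with $0 \le d \le \delta(X/C)$. Any $Y$ attaining $d$ is automatically strong in $F$: for a finite tuple $\zbar$ from $D(F)$, the addition property gives $\delta(\zbar/Y) = \delta(Y + \gen{\zbar}_\Q/C) - \delta(Y/C) \ge d - d = 0$, because $Y + \gen{\zbar}_\Q$ is again a finite-dimensional extension of $C$ containing $X$. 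If $Y_1, Y_2$ both attain $d$, then submodularity gives $\delta(Y_1 \cap Y_2/C) + \delta(Y_1 + Y_2/C) \le 2d$; both summands are $\ge d$ by minimality, so $\delta(Y_1 \cap Y_2/C) = d$ as well. Thus the $d$-attainers are closed under intersection; being finite-dimensional over $C$, they have a least element $\hull{X}$ (take one of least dimension over $C$), which is strong by the above. Finally, for any strong $\Q$-subspace $Y$ of $D(F)$ with $X \subs Y$ I would apply the same submodular inequality to $Y$ and $\hull{X}$: since $Y \strong F$ gives $\delta(Y + \hull{X}/C) \ge \delta(Y/C)$, we obtain $\delta(Y \cap \hull{X}/C) \le d$, hence $= d$, hence $\hull{X} \subs Y \cap \hull{X} \subs Y$ by minimality of $\hull{X}$. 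So $\hull{X}$ is the smallest strong subspace of $D(F)$ containing $X$, and unwinding the reduction finishes the proof.

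There is no serious obstacle here: the whole content is the interaction of two facts set up earlier — that $C$ being strong makes $\delta(-/C)$ non-negative (so a minimum exists), and that submodularity makes the set of minimisers closed under intersection (so there is a least one). The remaining points, the well-definedness of $\delta$ on subspaces and the passage between $F$ and $D(F)$, are routine, and I would keep them brief.
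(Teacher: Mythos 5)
Your proof is correct and follows essentially the same route as the paper: since $C \strong F$, the values $\delta(A/C)$ over finite-dimensional extensions $A$ of $C$ containing $\abar$ are non-negative, so a minimal value $d$ is attained, and submodularity shows the minimisers are closed under intersection, hence there is a least one; the paper's proof is exactly this, stated more tersely (it leaves to the reader the points you check explicitly, namely that the least minimiser is strong in $F$ and that it lies inside every strong subspace containing $C \cup \abar$). The one wrinkle is your final step, where you write $\delta(Y/C)$ and invoke submodularity for an arbitrary strong subspace $Y$, which may be infinite-dimensional over $C$ so that neither is literally available as stated; this is easily repaired: put $A = Y \cap \hull{X}$, take a basis $\bar{h}$ of $\hull{X}$ over $A$, note that $\bar{h}$ remains $\Q$-linearly independent over $Y$ so that $\delta(\bar{h}/A) \ge \delta(\bar{h}/Y) \ge 0$ by strongness of $Y$, and conclude from the addition property that $\delta(A/C) \le \delta(\hull{X}/C) = d$, whence $A$ is a minimiser and $\hull{X} \subs A \subs Y$.
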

\begin{proof}
  Since $C \strong F$, there is a finite-dimensional extension $C \subs A \subs D(F)$ with $\abar \in A$ such that $\delta(A/C)$ is minimal, say equal to $d$. If $A_1$ and $A_2$ are two such, then by submodularity we see that $\delta(A_1\cap A_2/C) \le d$, and hence we can take $\hull{C,\abar}_F$ to be the intersection of all such $A$.
\end{proof}

\begin{remark}
Often in amalgamation-with-predimension constructions, the analogue of what is here called the \emph{hull} is called the \emph{strong closure} or, when \emph{self-sufficient} is used in place of strong, the \emph{self-sufficient closure}. While the notion of a self-sufficient subset makes semantic sense ($X$ is self-sufficient in $F$ if no witnesses outside $X$ are needed to realise its full type in $F$), the sense is lost when dealing with extensions rather than subsets because in ``$F$ is a self-sufficient extension of $X$'', the ``self'' should semantically refer to $X$ rather than $F$, in conflict with the syntactic construction of the phrase. Since the focus here is on extensions rather than subsets, we do not use the terminology of self-sufficiency. Similarly, the terminology ``strong closure'' conflicts with the notion here of strong exponential-algebraic closure. The simplest amalgamation-with-predimension construction is that of the universal acylic graph, and there the concept corresponding to our hull is exactly the convex hull of a set in the sense of the graph, that is, the hull of $X$ is the union of all paths between elements of $X$. 
\end{remark}

\begin{prop}\label{almost strong}
  Suppose $F$ is an E-field, $C \strong F$, and $\abar$ is a tuple from $F$. Suppose $K \subs F$ is an E-subfield of $F$, containing $C$, such that $\hull{C,\abar}_F \cap K$ is spanned by $C \cup \abar$. Let $r = \delta(\abar/C) - \delta(\hull{C,\abar}_F/C)$. Then $\etd^K(\abar/C) = \etd^F(\abar/C) + r$.
\end{prop}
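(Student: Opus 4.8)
Since $F$ is a total E-field, $D(F) = F$ and every tuple below automatically lies in the domain. Write $P$ for the $\Q$-span of $C \cup \abar$, so that $\delta(P/C) = \delta(\abar/C)$, put $H = \hull{C,\abar}_F$ and $d = \delta(H/C)$, and recall from the submodularity lemma that $\delta(-/C)$ is submodular on the lattice of $\Q$-subspaces of $F$. The plan is to evaluate $\etd^F(\abar/C)$ and $\etd^K(\abar/C)$ separately by means of the characterisation of exponential transcendence degree in Fact~\ref{etd fact}, for which the hypothesis $C \strong F$ is precisely what is required; note also that $C \strong K$, since strongness of an extension is inherited by every intermediate subfield. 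From the proof of Lemma~\ref{hull lemma}, $d$ is the least value of $\delta(A/C)$ as $A$ ranges over the finite-dimensional $\Q$-subspaces with $P \subseteq A \subseteq F$, so Fact~\ref{etd fact} applied in $F$ gives $\etd^F(\abar/C) = d$. Hence $\etd^F(\abar/C) + r = d + (\delta(\abar/C) - d) = \delta(\abar/C)$, and the whole statement reduces to the single equality $\etd^K(\abar/C) = \delta(\abar/C)$.

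To prove this, apply Fact~\ref{etd fact} inside $K$: $\etd^K(\abar/C)$ is the least value of $\delta(A/C)$ over finite-dimensional $\Q$-subspaces $A$ with $P \subseteq A \subseteq K$. Taking $A = P$ shows $\etd^K(\abar/C) \le \delta(\abar/C)$, so it suffices to prove $\delta(A/C) \ge \delta(\abar/C)$ for every such $A$. Here the hypothesis on $H$ is used: since $A \subseteq K$ we have $A \cap H \subseteq K \cap H = P$, and the reverse inclusion is clear, so $A \cap H = P$ and $\delta(A \cap H/C) = \delta(\abar/C)$. Applying submodularity to the subspaces $A$ and $H$,
\[\delta(A + H/C) + \delta(\abar/C) = \delta(A + H/C) + \delta(A \cap H/C) \le \delta(A/C) + \delta(H/C) = \delta(A/C) + d .\]
Since $A + H$ is a finite-dimensional $\Q$-subspace of $F$ containing $P$ (indeed containing $H$), the minimality defining $d$ forces $\delta(A + H/C) \ge d$; cancelling $d$ yields $\delta(A/C) \ge \delta(\abar/C)$. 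Together with the bound obtained from $A = P$, this gives $\etd^K(\abar/C) = \delta(\abar/C)$, and hence the proposition.

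There is no serious obstacle here; the argument is short. The key step — and the only place the hypothesis is genuinely used — is the submodularity estimate in the second paragraph, which converts the assumption $\hull{C,\abar}_F \cap K = P$ into the statement $P \strong K$. The other ingredient, that $\delta(\hull{C,\abar}_F/C) = \etd^F(\abar/C)$, is immediate from the minimality property established in the proof of Lemma~\ref{hull lemma}, and the remaining manipulations are routine unwinding of the characterisation of $\etd$ in Fact~\ref{etd fact}.
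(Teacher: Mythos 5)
Your argument is correct and is essentially the paper's own proof: both use Fact~\ref{etd fact} (noting $C\strong K$) to reduce the statement to showing $\delta(\abar,\bbar/C)\ge\delta(\abar/C)$ for every tuple $\bbar$ from $K$, and both establish this by applying submodularity to the subspaces spanned by $C\cup\abar\cup\bbar$ and $H=\hull{C,\abar}_F$, using the hypothesis $K\cap H=\operatorname{span}(C\cup\abar)$ together with the minimality of $\delta$ at the hull. The only difference is presentational, in that you spell out explicitly that $\delta(H/C)$ realises the minimum in Fact~\ref{etd fact} for $F$, which the paper leaves implicit.
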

\begin{proof}
  By Fact~\ref{etd fact} and the definition of the hull,
\[\etd^K(\abar/C) = \min\class{\delta(\abar,\bbar/C)}{\bbar \subs K} = \delta(\hull{C,\abar}_K/C)\]
and similarly
\[\etd^F(\abar/C) = \delta(\hull{C,\abar}_F/C).\]
So we must show that $\delta(\hull{C,\abar}_K/C) = \delta(\abar/C)$, or equivalently that for any $\bbar$ from $K$, $\delta(\abar,\bbar/C) \ge \delta(\abar/C)$.

  Let $A$ be the $\Q$-span of $C \cup \abar$, $H = \hull{\abar,C}_F$, and let $B \subs K$ be an extension of $A$, generated by some tuple $\bbar$. Then, by the assumption on $K$, $B \cap H = A$. By the submodularity of $\delta$, 
\[\delta(B/C) - \delta(A/C) \ge \delta(BH/C) - \delta(H/C)\]
but the right hand side is positive as $H = \hull{A}_F$. Hence $\delta(B/C) \ge \delta(A/C)$ as required.
\end{proof}

\begin{prop}\label{non-model-complete}
Let $F$ be a countable strongly exponentially-algebraically closed E-field
satisfying ASP, and of exponential transcendence degree at least 1. Then there
is $K \subs F$, a proper E-subfield such that $K \iso F$ but the
inclusion $K \into F$ is not an elementary embedding. In particular,
$F$ is not model-complete.
\end{prop}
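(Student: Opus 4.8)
The plan is to produce a proper $E$-subfield $K\subsetneq F$ with $K\cong F$, together with an existential formula with parameters in $K$ that holds in $F$ but fails in $K$; this makes $K\hookrightarrow F$ non-elementary, and since $K$ and $F$ are then isomorphic models of $\Th(F)$, it follows that $F$ is not model complete. The feature of $K$ that is invisible to existential formulas yet breaks elementarity will be that a certain exponentially algebraic element of $F$ is \emph{not} exponentially algebraic in $K$; the underlying mechanism is the predimension drop of Proposition~\ref{almost strong}. To set up, use ASP to fix a finite tuple $\cbar$ with $\gen\cbar_F\strong F$. Since $F$ is an ELA-field it has full kernel, so after replacing $\cbar$ by a generating tuple of $\hull{\gen\cbar_F,\tau}_F$ for a suitable kernel generator $\tau$ (Lemma~\ref{hull lemma}) we may assume $C:=\gen\cbar_F$ is finitely generated, has full kernel, and $C\strong F$. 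Put $F_0=C^{ELA}$; by Theorem~\ref{ELA-well-defined}, $F_0$ is finitely presented, $F_0\strong F$, and by Proposition~\ref{etd determines extension} the extension $F_0\strong F$ is determined up to isomorphism by $\kappa:=\etd(F/F_0)=\etd(F/C)$, which is $\ge 1$ by hypothesis (using, when $\etd(F)$ is finite, that $\cbar$ may be chosen so as not to be an exponential transcendence base).

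Since $F$ is \seac\ it contains a point of every perfectly rotund variety, generic over any finitely generated subfield (Lemma~\ref{seac characterization}(2),(3)). Fix a perfectly rotund $V\subseteq G^3$, defined over $C$, whose projection to the first $\ga\times\gm$-factor is dominant, hence two-dimensional, so that the generic fibre of this projection is an irreducible curve. Take a point $(a,b_1,b_2,e^{a},e^{b_1},e^{b_2})\in V$ generic over $C$, chosen so that $a,b_1,b_2$ are $\Q$-linearly independent over $C$ and $b_1$ is transcendental over $C(a,e^{a})$ (possible by the shape of $V$). Then $\delta(a,b_1,b_2/C)=\dim V-3=0$, so $a\in\ecl^{F}(C)$ and, by Fact~\ref{etd fact}, $\delta(\hull{C,a}_F/C)=0$; whereas $\delta(a/C)=\td(a,e^{a}/C)-1=1$. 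Thus $r:=\delta(a/C)-\delta(\hull{C,a}_F/C)=1>0$, and, crucially, $\delta(b_1,b_2/\Q\langle C,a\rangle)=\delta(a,b_1,b_2/C)-\delta(a/C)=-1$, so $\Q\langle C,a\rangle:=D(\gen{C,a}_F)$ is \emph{not} strong in $F$.

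To build $K$, let $A^{\ast}=(\gen{C,a}_F)^{ELA}$, which is well defined (Theorem~\ref{ELA-well-defined}). By Lemma~\ref{F strong in F^ELA}, $\gen{C,a}_F\strong A^{\ast}$, i.e.\ $\Q\langle C,a\rangle\strong A^{\ast}$; moreover $F_0\subseteq A^{\ast}$ and $\etd(A^{\ast}/F_0)=1$, since in $A^{\ast}$ the element $a$ is exponentially transcendental over $F_0$ (no $\delta$-dropping witnesses are present in the free $ELA$-closure). As $A^{\ast}$ is built from $\gen{C,a}_F$ by strong exponentially algebraic steps, iterating part~(5) of Lemma~\ref{seac characterization} embeds $A^{\ast}$ into $F$ over $\gen{C,a}_F$; write $A'$ for the image, so $F_0'\subseteq A'\subseteq F$ with $\Q\langle C,a\rangle\strong A'$ and $F_0'\cong F_0$. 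Using \seac-ness of $F$ again, extend $A'$ inside $F$ to a countable kernel-preserving \seac\ strong extension $K$ with $\etd(K/A')=\kappa-1$ (there is room, since $A'$ is exponentially algebraic over $F_0'$ inside $F$, whence $\etd(F/A')=\kappa$). Then $F_0'\strong A'\strong K$, so $\Q\langle C,a\rangle\strong K$, and $\etd(K/F_0')=(\kappa-1)+1=\kappa$, so by Proposition~\ref{etd determines extension} $K\cong F$. Finally $K\neq F$: $\Q\langle C,a\rangle\strong K$ but $\Q\langle C,a\rangle\nstrong F$ (witnessed by $b_1,b_2$), so $K\subsetneq F$ is proper; and $C,a\in K$.

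For the non-elementarity, let $\psi(x;\cbar)$ be the existential formula asserting that there exist $y_1,y_2$ with $(x,y_1,y_2,\exp x,\exp y_1,\exp y_2)\in V$ lying in the open subset of $V$ cut out by finitely many inequations, chosen so that any solution with $x=a$ is generic in $V$ over $C$ — concretely, that $(\exp x,x)$ lies in the dense open part of the dominant projection over which the fibre is the expected curve, and that $(y_1,y_2,\exp y_1,\exp y_2)$ avoids the finitely many special points of that fibre-curve; since $a,e^{a}$ are algebraically independent over $C$, these conditions indeed force genericity of the whole point over $C$. Then $F\models\psi(a;\cbar)$, witnessed by $(b_1,b_2)$. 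If $K\models\psi(a;\cbar)$ with witness $(b_1',b_2')\in K$, then $(a,b_1',b_2',e^{a},e^{b_1'},e^{b_2'})$ is generic in $V$ over $C$, so by additive freeness $a,b_1',b_2'$ are $\Q$-linearly independent over $C$ and $\delta(a,b_1',b_2'/C)=\dim V-3=0$; hence $\delta(b_1',b_2'/\Q\langle C,a\rangle)=-1<0$, contradicting $\Q\langle C,a\rangle\strong K$. So $K\not\models\psi(a;\cbar)$, and $K\hookrightarrow F$ is not elementary. The hard part will be the explicit construction of $V$ — a perfectly rotund subvariety of $G^3$ over $C$ with a two-dimensional projection onto one $\ga\times\gm$-factor and irreducible generic fibre — together with the quantifier-free "general position" conditions that make $\psi$ genuinely existential and that really do force genericity once the parameter $a$ is pinned; a secondary technical point is the bookkeeping in the construction of $K$, namely the exponential-transcendence-degree count and the use of \seac-ness to place a free $ELA$-closure inside $F$.
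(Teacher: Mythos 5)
Your overall architecture is workable, and the second half is genuinely different from the paper's: instead of mirroring a decomposition of $F$ by a hand-built chain of isomorphisms $\theta_n$ (where the paper shows $\gen{F_0(a_1)}^{ELA}_F$ is itself the free object via the claim $a_2,a_3\notin K_1$ and Proposition~\ref{almost strong}), you embed a fresh copy $A'$ of the free ELA-closure of $\gen{C,a}_F$ by iterating strong exponential-algebraic closedness, then extend it inside $F$ to a countable \seac\ strong extension with the right exponential transcendence degree and quote Proposition~\ref{etd determines extension}. Granting the routine dovetailing (Lemma~\ref{seac characterization}(5), Lemma~\ref{order of extensions}, unions of strong chains, and the $\etd$ bookkeeping), that route is legitimate and arguably cleaner.

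The genuine gap is in the non-elementarity step. You claim one can add finitely many inequations to ``$(x,y_1,y_2,e^x,e^{y_1},e^{y_2})\in V$'' so that any witness with $x=a$ is forced to be generic in $V$ over $C$, the witnesses avoiding ``the finitely many special points of the fibre-curve''. This fails: since $(a,e^a)$ is generic in $\ga\cross\gm$ over $C$, the non-generic points of $V$ lying above $(a,e^a)$ are exactly the points of the fibre curve that are algebraic over $C(a,e^a)$, an infinite and Zariski-dense subset of that curve, so no finite set of inequations can exclude them while keeping your honest witness $(b_1,b_2)$. What genericity was meant to buy is only that the witnesses are $\Q$-linearly independent over the span of $C\cup\{a\}$: mere membership in $V$ gives $\delta(y_1,y_2/\gen{C,a})\le 1-\ldim_\Q(y_1,y_2/\gen{C,a})$, so a witness satisfying a rational relation $q_1y_1+q_2y_2\equiv q_0a \pmod{D(C)}$ escapes your contradiction. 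The repair must come from the variety itself, and this is precisely what the paper's explicit choice does: $V$ is cut by three generic hyperplanes with coefficients in $F_0$ (which, being an ELA-field, has enough algebraically independent elements --- your finitely generated $C$ need not), and for that $V$ \emph{any} point above $(a,e^a)$, generic or not, yields $\delta(a,y_1,y_2/F_0)\le 0$, contradicting $\etd^K(a/F_0)=1$, because an extra $\Q$-linear relation together with its forced multiplicative counterpart cuts $V$ properly (additive and multiplicative freeness plus genericity of the hyperplanes). Since you also defer the construction of $V$, your two flagged ``hard parts'' are really the same missing argument, and it is the core of the non-model-completeness proof rather than a finishing touch.
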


\begin{proof}\footnote{My thanks to Alf Onshuus who noticed a mistake in an earlier version of this proof.}
Using the ASP assumption, let $\cbar$ be a finite tuple such that $\cbar \strong F$ and $\etd(F/\cbar) \ge 1$, and let $F_0 = \gen{\cbar}^{ELA}_F$. So $F_0$ is a finitely generated strong ELA-subfield of $F$.

The precise variety $V$ we use is not so important so we take a simple example, the intersection of three generic hyperplanes in $G^3$. That is, let $\alpha_1, \ldots, \alpha_{18} \in F_0$ be algebraically independent and let $V$ be the subvariety of $G^3$ given by
\begin{eqnarray*}
\alpha_1 X_1 + \alpha_2 X_2  + \alpha_3 X_3 +\alpha_4 Y_1  + \alpha_5 Y_2 + \alpha_6 Y_3 = 1 \\
\alpha_7 X_1 + \alpha_8 X_2  + \alpha_9 X_3 +\alpha_{10} Y_1  + \alpha_{11} Y_2 + \alpha_{12} Y_3 = 1 \\
\alpha_{13} X_1 + \alpha_{14} X_2  + \alpha_{15} X_3 +\alpha_{16} Y_1  + \alpha_{17} Y_2 + \alpha_{18} Y_3 = 1 \\
\end{eqnarray*}
where $X_1,X_2,X_3$ are the coordinates in $\ga$ and $Y_1,Y_2,Y_3$ are the coordinates in $\gm$. 
\begin{claim}
 $V$ is perfectly rotund.
\end{claim}
\begin{proof}
Certainly $V$ is irreducible and has dimension 3. The projections to $\ga^3$ and to $\gm^3$ are dominant, so $V$ is additively and multiplicatively free. Similarly, for any $M \in \Mat_{3\cross 3}(\Z)$, if $\rk M = 2$ then $\dim M\cdot V = 3$ and if $\rk M = 1$ then $\dim M\cdot V = 2$. $V$ must be \Kummergen\ from its simple structure, but in any case we could replace it by the locus of $(X_1/m,X_2/m,X_3/m,\sqrt[m]{Y_1},\sqrt[m]{Y_1},\sqrt[m]{Y_1})$ for a suitably large integer $m$ (where $(X_1,X_2,X_3,Y_1,Y_2,Y_3)$ is generic in $V$) without affecting the rest of the argument.
\end{proof}

Choose $(a_1,a_2,a_3) \in F^3$ such that $(a_1,a_2,a_3,e^{a_1},e^{a_2},e^{a_3})$ is generic in $V$ over $F_0$. Since $F$ is strongly exponentially-algebraically closed and has ASP, such a point exists by Lemma~\ref{seac characterization}. Now let $t \in F$ be exponentially transcendental over $F_0$, let $F_1 = \gen{F_0(t)}_F^{ELA}$, and let $K_1 = \gen{F_0(a_1)}_F^{ELA}$.

\begin{claim}
 $a_2,a_3 \notin K_1$.
\end{claim}
\begin{proof}
The intuition here is that $V$ already gives the maximum three constraints between $a_1$, $a_2$, and $a_3$. If $a_2$ or $a_3$ were to lie in $K_1$ that would be an extra constraint, or perhaps $r+1$ extra constraints with $r$ extra witnesses, which would contradict $F_0$ being strong in $F$.

Suppose for a contradiction that $a_2 \in K_1$. Then there is a shortest chain of subfields of $K_1$
\[\acl^F(F_0(a_1,e^a_1)) = L_0 \subset L_1 \subset L_2 \subset \cdots \subset L_r\]
such that $a_2 \in L_r$ and, for each $i \in \{1,\ldots,r\}$, there are $x_i,e^{x_i} \in L_i$ such that $L_i = \acl^F(L_{i-1}(x_i,e^{x_i}))$ and either $x_i \in L_{i-1}$ or $e^{x_i} \in L_{i-1}$.

For each $i \in \{1,\ldots,r\}$, we have $\td(L_i/L_{i-1}) = 1$. We can consider each $L_i$ as a partial exponential field by taking the intersection of the graph of $\exp_F$ with $L_i^2$. Then for each $i$, $L_i \neq L_{i-1}$, so $x_i \in D(L_i) \minus D(L_{i-1})$, so in particular $a_1,x_1,\ldots,x_r$ are $\Q$-linearly independent over $F_0$, and 
$e^{a_1},e^{x_1},\ldots,e^{x_r}$ are multiplicatively independent over $F_0$.

Let $V'\subs G^2$ be the fibre of $V$ given by fixing the coordinates $X_1=a_1$ and $Y_1=e^{a_1}$. So $V'$ is the locus of $(a_2,a_3,e^{a_2},e^{a_3})$ over $L_0$. Also $\dim V' = 1$, and $V'$ projects dominantly to each coordinate, so $a_2,a_3,e^{a_2},e^{a_3}$ are interalgebraic over $L_0$. In particular, they all lie in $L_r$, and their locus over $L_{r-1}$ is $V'$. Since $V$ is additively and multiplicatively free, so is $V'$. So $a_2,a_3$ are $\Q$-linearly independent over $L_{r-1}$ and $e^{a_2},e^{a_3}$ are multiplicatively independent over $L_{r-1}$.

Thus if $x_r \in L_{r-1}$ then $a_1,a_2,a_3,x_1,\ldots,x_r$ are \Q-linearly independent over $F_0$. Otherwise, $e^{x_r} \in L_{r-1}$, and $e^{a_1},e^{a_2},e^{a_3},e^{x_1},\ldots,e^{x_r}$ are multiplicatively independent over $F_0$, but then again (since the kernel of the exponential map lies in $F_0$) $a_1,a_2,a_3,x_1,\ldots,x_r$ are \Q-linearly independent over $F_0$.

So we have 
\begin{multline*}
\td(a_1,a_2,a_3,x_1,\ldots,x_r,e^{a_1},e^{a_2},e^{a_3},e^{x_1},\ldots,e^{x_r}/F_0) \\
=\td(L_r/L_0) + \td(L_0/F_0) = r + 2 
\end{multline*}

and thus
\[\delta(a_1,a_2,a_3,x_1,\ldots,x_r/F_0) = r+2 - (r+3) = -1\]
which contradicts $F_0 \strong F$. Hence $a_2,a_3 \notin K_1$.
\end{proof}
Indeed, the proof of the claim shows that $a_2,a_3$ must be $\Q$-linearly independent over $K_1$ since their locus over $K_1$ is the same as over $L_0$. Now $\hull{F_0,a_1}^F$ is spanned by $F_0,a_1,a_2,a_3$, so by Proposition~\ref{almost strong}, 
\[\etd^{K_1}(a_1/F_0) = \etd^{F}(a_1/F_0) + \delta(a_1/F_0) - \delta(a_1,a_2,a_3/F_0) = 0+1-0 = 1.\]
Thus $\etd^{K_1}(a_1/F_0) = \etd^{F_1}(t/F_0) = 1$, so there is an isomorphism
$\theta_1:F_1 \to K_1$ taking $t$ to $a_1$ and fixing $F_0$ pointwise. Now choose an $\omega$-chain of ELA-subfields of $F$
\[F_1 \strong F_2 \strong F_3 \strong \cdots \strong F\]
such that $F_{n+1}$ is a simple strong ELA-extension of $F_n$, for each $n$, and $\bigcup_{n \in \N}F_n = F$. Inductively we construct chains of ELA-subfields $(K_n)_{n \in \N}$ of $F$ and isomorphisms $\theta_n:F_n \to K_n$, and we also prove that $\etd(F/F_n) +1 = \etd(F/K_n)$. (If $\etd(F)$ is infinite this is trivially true since both sides will be equal to $\aleph_0$.) We already have $K_1$ and $\theta_1$. Note that $\etd(F/F_1) + 1 = \etd(F/K_1)$ since $\{t\}$ is an exponential transcendence base for $\ecl^F(F_1)$ over $\ecl^F(K_1)$.

Suppose we have $K_n$ and $\theta_n$. If $F_n \strong F_{n+1}$ is an exponentially transcendental simple extension, then choose any $b \in F$ which is exponentially transcendental over $K_n$, and take $K_{n+1} = \gen{K_n(b)}^{ELA}_F$. This $b$ exists because $\etd(F/F_n) \le \etd(F/K_n)$. Also $\etd(F/F_{n+1}) + 1 = \etd(F/F_n)$ and $\etd(F/K_{n+1}) + 1 = \etd(F/K_n)$, so $\etd(F/F_{n+1}) +1 = \etd(F/K_{n+1})$. By Lemma~\ref{unique generic extensions}, $\theta_n$ extends to an isomorphism $\theta_{n+_1}: F_{n+1} \to K_{n+1}$. 
Now suppose that $F_n \strong F_{n+1}$ is exponentially algebraic. Let $V_{n+1}$ be the corresponding perfectly rotund subvariety, say given by some equations $f_i = 0$, with coefficients in $F_n$. Let $W_{n+1}$ be the subvariety obtained from $V_{n+1}$ by applying $\theta_n^{-1}$ to all the coefficients of the $f_i$. Then $W_{n+1}$ is a perfectly rotund subvariety defined over $K_n$, and $K_n$ is a finitely generated ELA-subfield of $F$, which satisfies ASP, so by Lemma~\ref{seac characterization} there is a realization of the ELA-extension of $K_n$ corresponding to $W_{n+1}$ in $F$. Let $K_{n+1}$ be such a realization, and let $\theta_{n+1}$ be any isomorphism from $F_{n+1}$ to $K_{n+1}$ extending $\theta_n$. Also $\etd(F/F_{n+1}) = \etd(F/F_n)$ and $\etd(F/K_{n+1}) = \etd(F/K_n)$.

Let $K = \bigcup_{n \in \N}K_n$ and $\theta = \bigcup_{n \in \N} \theta_n$. Then $\theta: F \to K$ is an isomorphism. But the inclusion $K \subs F$ is not an elementary embedding, because
\[ F \models \exists x_2,x_3 [(a_1,x_2,x_3,e^{a_1},e^{x_2},e^{x_3}) \in V] \]
but $K_1 \strong K$, so $\etd^K(a_1) = 1$, and hence
\[ K \models \neg \exists x_2,x_3 [(a_1,x_2,x_3,e^{a_1},e^{x_2},e^{x_3}) \in V] \]
\end{proof}

 Note that we have shown more than non-model-completeness in the language of exponential fields. We have
 shown that even if one adds symbols for every definable subset of the
 kernel, then the result is still not model-complete.

We can now prove:
\begin{theorem}\label{pexp not model complete}
 Zilber's pseudo-exponential fields (of exponential transcendence degree at least 1) are not model-complete.
\end{theorem}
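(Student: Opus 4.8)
The plan is to deduce the theorem from Proposition~\ref{non-model-complete}, treating countable and uncountable pseudo-exponential fields separately. First suppose $F$ is a countable pseudo-exponential field with $\etd(F) \geq 1$; I claim it meets the hypotheses of Proposition~\ref{non-model-complete}. By definition $F$ is \seac, and $\etd(F) \geq 1$ by assumption, so it remains only to verify ASP. Since $F$ has standard kernel, the partial E-field $SK$ embeds in $F$, with cyclic generator $\tau$ transcendental, $D(SK) = \Q\tau$, and $\exp(\tau) = 1$, so $\delta(\tau) = \td(\tau,1/\Q) - \ldim_\Q(\tau) = 1 - 1 = 0$. By the Addition Property, $\delta(\xbar/\Q\tau) = \delta(\xbar \cup \{\tau\})$ for every finite tuple $\xbar$ from $D(F)$, which is $\geq 0$ by the Schanuel property; hence $\gen{\tau}_F \strong F$, and since $\gen{\tau}_F$ is finitely generated, $F$ has ASP. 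Thus Proposition~\ref{non-model-complete} applies and $F$ is not model-complete. (These $F$ are exactly the $B_\kappa$ with $1 \leq \kappa \leq \aleph_0$.) In particular, applying this to $B_{\aleph_0}$, the proof of Proposition~\ref{non-model-complete} produces a proper subfield $K \subsetneq B_{\aleph_0}$ with $K \iso B_{\aleph_0}$ whose inclusion is not elementary; as $K$ and $B_{\aleph_0}$ are both models of the complete first-order theory $\Th(B_{\aleph_0})$, that theory is not model-complete.

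For the uncountable case, recall that model-completeness is a property only of the complete first-order theory, so it suffices to show that every uncountable pseudo-exponential field $F$ is elementarily equivalent to $B_{\aleph_0}$. Such an $F$ has infinite exponential transcendence degree, and by Zilber's analysis of the (quasiminimal excellent) class of pseudo-exponential fields, under which such a field is homogeneous over its countable closed submodels, the countable model $B_{\aleph_0}$ embeds elementarily into $F$. Hence $F \equiv B_{\aleph_0}$, so $\Th(F) = \Th(B_{\aleph_0})$ is not model-complete by the previous paragraph, and therefore $F$ is not model-complete either.

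The step I expect to need the most care is the elementary embedding $B_{\aleph_0} \preccurlyeq F$ used in the uncountable case; it is not established in the present paper and rests on Zilber's categoricity machinery. A self-contained alternative would be to re-run the construction in the proof of Proposition~\ref{non-model-complete} directly inside an uncountable pseudo-exponential field $F$, producing an isomorphism $\theta\colon F \to K$ onto a proper, non-elementary ELA-subfield by transfinite recursion of length $|F|$ in place of length $\omega$, and using strong exponential-algebraic closedness together with the countable-closure property at each successor stage to realise the transported perfectly rotund varieties; the additional content there is purely the transfinite bookkeeping, with no new ideas.
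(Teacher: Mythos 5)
Your proof is correct and follows essentially the same route as the paper: the countable case is exactly Proposition~\ref{non-model-complete} (your explicit ASP check via the kernel generator is fine, though the Schanuel property already gives $\emptyset \strong F$), and for the uncountable case the paper supplies precisely the external input you flag, citing Theorem~5.13 of \cite{Zilber05peACF0} that all pseudo-exponential fields of infinite exponential transcendence degree are $\Loo$-equivalent, hence elementarily equivalent, to $B_{\aleph_0}$. Only your parenthetical alternative of re-running the construction transfinitely understates the difficulty, since the uniqueness results used there (Theorem~\ref{ELA-well-defined}, Lemma~\ref{seac characterization}) are established in this paper only for countable or finitely generated data; but that aside is not needed for your argument.
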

\begin{proof}
  Proposition~\ref{non-model-complete} shows that $B_\kappa$ is not model-complete when $1 \le \kappa \le \aleph_0$. By \cite[Theorem~5.13]{Zilber05peACF0}, every pseudo-exponential field of infinite exponential transcendence degree is $\Loo$-equivalent to $B_{\aleph_0}$, so in particular elementarily equivalent, and hence also not model-complete.
\end{proof}

\section{The Schanuel nullstellensatz}

D'Aquino, Macintyre and Terzo \cite{DMT10} and also Shkop \cite{Shkop} have shown that every strongly exponentially-algebraically closed  exponential field satisfies the \emph{Schanuel nullstellensatz}: 
\begin{defn}
  An ELA-field $F$ is said to satisfy the Schanuel nullstellensatz iff whenever $f \in F[X_1,\ldots,X_n]^E$ is an exponential polynomial over $F$, not equal to $\exp(g)$ for any exponential polynomial $g$, then there are $a_1,\ldots,a_n \in F$ such that $f(a_1,\ldots,a_n) = 0$.
\end{defn}
This statement was conjectured by Schanuel to hold in $\Cexp$, and Henson and Rubel \cite[Theorem~5.4]{HR84} proved that it does indeed hold there.

To show that a pure field is algebraically closed it is enough to know that every non-trivial polynomial has a root. The Schanuel nullstellensatz is an analogue of that statement, but it does not characterize strongly exponentially-algebraically closed exponential fields. 
\begin{theorem}
There are ELA-fields satisfying the Schanuel nullstellensatz which are not \seac.
\end{theorem}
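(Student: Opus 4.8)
The plan is to build a countable ELA-field $F$ with standard kernel which satisfies the Schanuel nullstellensatz but omits every generic point of one fixed, sufficiently ``generic'' perfectly rotund variety $V$; by Lemma~\ref{seac characterization} and Proposition~\ref{simple = perf rotund} this makes $F$ fail to be \seac. The conceptual point is that the Schanuel nullstellensatz only demands zeros of a \emph{single} exponential polynomial $f$. Following Henson--Rubel, D'Aquino--Macintyre--Terzo and Shkop \cite{HR84,DMT10,Shkop}, a zero of $f$ (for $f\ne\exp(g)$) can be produced as a generic point of a rotund, additively and multiplicatively free, Kummer-generic ``exponential variety'' $W_f$, obtained by adjoining auxiliary additive coordinates for the iterated exponentials occurring in $f$. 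Such a $W_f$ is of a restricted kind: either it is a hypersurface in $G^m$ (so has codimension $1$), or each auxiliary coordinate contributes a defining equation of the special form ``(a $\pm1$-coefficient monomial in the multiplicative coordinates) $=$ (an additive coordinate)''. I would choose $V\subs G^n$, $n\ge 2$, to be the intersection of $n$ generic hyperplanes with algebraically independent coefficients, exactly as in the proof of Proposition~\ref{non-model-complete}: $V$ has codimension $n\ge 2$ and all $n$ of its defining equations have several nonzero algebraically independent coefficients mixing the additive and multiplicative coordinates. Since codimension and the ``many nonzero generic coefficients'' feature are both preserved under the operations defining $\sim_F$ (applying rank-$n$ integer matrices and translating by points $(\cbar,e^\cbar)$), $V$ is $\sim_F$-equivalent to no exponential variety of any exponential polynomial over $F$.

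Next I would fix $F_0=SK^{ELA}$, a finitely presented ELA-field with standard kernel (hence with ASP, via $SK\strong F_0$), containing $V$ over itself, and build an increasing chain $F_0\strong F_1\strong F_2\strong\cdots$ of countable ELA-fields with $F=\bigcup_i F_i$, where $F_{i+1}$ is obtained from $F_i$ by adjoining a tuple generic over $F_i$ in some rotund, additively and multiplicatively free, Kummer-generic exponential variety $W$ of an exponential polynomial (together with its exponential structure), followed by the free ELA-extension. The choices are dovetailed so that every exponential polynomial over every $F_i$ which is not of the form $\exp(g)$ has its associated variety treated at a later stage. By Proposition~\ref{rotund prop} and Lemma~\ref{strong lemma} each step is a strong, kernel-preserving extension, so $F_0\strong F$; in particular $F$ has standard kernel and ASP, and, by the cited reduction applied over each $F_i$, $F$ satisfies the Schanuel nullstellensatz. (Note that the free ELA-extension steps only adjoin logarithms, which are again exponential varieties of exponential polynomials, so the \emph{entire} extension $F_0\strong F$ is built from exponential varieties of exponential polynomials.)

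It then remains to show $F$ has no $(\abar,e^{\abar})$ generic in $V$ over $F_0$. If it did, then since $V$ is perfectly rotund we would have $\delta(\abar/F_0)=0$ with $\abar$ $\Q$-linearly independent over $F_0$; choosing $k$ with $\abar\in F_k$ we get $F_0\strong F_k$ finitely generated and $\etd^{F_k}(\abar/F_0)=0$, so by Fact~\ref{etd fact} the $\Q$-span of $F_0\cup\abar$ is strong in $F_k$ and $\gen{F_0,\abar}^{ELA}_{F_k}\strong F_k$ is the simple exponentially-algebraic extension $F_0|V$. Passing to $E=\ecl^{F_k}(F_0)$, we have $F_0\strong F_0|V\strong E$ with $F_0\strong E$ a finitely generated exponentially-algebraic strong extension, so Theorem~\ref{Jordan-Holder} forces a factor $\sim_{F_0}$-equivalent to $V$ to appear in every decomposition of $F_0\strong E$ into simple extensions. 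The hard part -- which I expect to be the main obstacle, and where most of the work lies -- is to show that the simple exponentially-algebraic factors actually arising in our construction are, up to $\sim_{F_0}$, only exponential varieties of exponential polynomials: this requires tracking $\ecl(F_0)$ along the chain and checking, by a submodularity/predimension computation inside each $F_i\strong F_{i+1}$, that adjoining a generic point of an exponential variety of an exponential polynomial (and closing off under the ELA operations) contributes only further such varieties to $\ecl(F_0)$ -- in particular that the exponentially-transcendental hypersurface additions contribute no new exponentially-algebraic structure over $F_0$ at all. Granting this, no factor of $F_0\strong E$ is $\sim_{F_0} V$, since by the first paragraph $V$ is not $\sim_{F_0}$-equivalent to any exponential variety of an exponential polynomial; this contradiction shows $F$ has no generic point of $V$ over $F_0$, so $F$ is an ELA-field satisfying the Schanuel nullstellensatz which is not \seac.
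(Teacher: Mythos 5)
Your overall strategy is the same as the paper's (build a restricted Fra\"iss\'e-type ELA-field realising enough zeros of exponential polynomials, then use Theorem~\ref{Jordan-Holder} to show a fixed perfectly rotund variety has no generic point), but the step you flag as ``the hard part'' is not a technical loose end --- it is the entire content of the theorem, and the invariant you propose for it does not do the job. The relation $\sim_{F}$ is defined between \emph{perfectly rotund} varieties, whereas the Shkop variety $W_f$ of an exponential polynomial in $n\ge 2$ variables is only rotund, of dimension $m+n-1>m$, hence has positive exponential transcendence and is never perfectly rotund. So the factors produced by Theorem~\ref{Jordan-Holder} from the extensions you adjoin are \emph{not} the $W_f$ themselves but other perfectly rotund varieties extracted from their decompositions, and nothing in your proposal controls their shape: your syntactic invariant (codimension $1$, or defining equations of the special monomial-equals-coordinate form, with ``many generic coefficients'') is a property of the presentation of $W_f$, and you give no argument that it passes to the perfectly rotund factors of the decomposition, nor that it is invariant under the base changes over the growing fields $F_i$. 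In short, the claim ``the simple exponentially-algebraic factors arising in the construction are, up to $\sim_{F_0}$, exponential varieties of exponential polynomials'' cannot even be literally true as stated, and no substitute invariant is proved.

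The paper closes exactly this gap with a semantic rather than syntactic invariant. First it reduces the Schanuel nullstellensatz, over fields of infinite exponential transcendence degree, to exponential polynomials in \emph{one} variable; a root $a$ of such a polynomial generates the extension as an E-field by the single element $a$. This motivates calling a perfectly rotund $V$ of \emph{depth 1} if the generating tuple of $F\strong F|V$ lies in $\gen{F,c}^E$ for a single element $c$, i.e.\ $F \strong \gen{F,c}^E \strong F|V$. One then takes $\Cat_1$ to be the category generated by simple extensions that are exponentially transcendental or given by depth-1 perfectly rotund varieties; being defined directly in terms of the simple extensions, this class is closed under the construction by fiat, the Fra\"iss\'e limit $\mathcal{U}$ exists as in Proposition~\ref{etd determines extension}, and it satisfies the nullstellensatz via the one-variable reduction. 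Finally, the intersection of three generic hyperplanes in $G^3$ (the variety from Proposition~\ref{non-model-complete}) is perfectly rotund but \emph{not} of depth 1 --- this is where a concrete predimension computation, in the spirit of the claim inside that proposition, is actually carried out --- so Theorem~\ref{Jordan-Holder} shows $\mathcal{U}$ omits its generic points and is not \seac. If you want to salvage your version, you would need to prove an analogue of this ``not depth 1'' statement for your invariant and show the invariant is inherited by Jordan--H\"older factors; as written, neither is established (and your assertion that any $n\ge 2$ generic-hyperplane intersection works is likewise unproved --- the paper only verifies $n=3$).
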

\begin{proof}
Suppose that $F$ is an ELA-field and $F \into F'$ is a partial E-field extension generated by a solution $a_1,\ldots,a_n$ to an exponential polynomial $f$ (allowing iterations of exponentiation), not of the form $\exp(g)$. Following Shkop, $F'$ is also generated over $F$ by a tuple $\bbar$ such that $V_f \leteq \loc(\bbar,e^\bbar/F) \subs G^m$ is rotund, additively and multiplicatively free, and of dimension $m+n-1$. In particular, the extension is strong. The method is to add extra variables to remove the iterations of exponentiation, and then to remove variables to ensure freeness. It follows that some choice of $n-1$ of the $a_i$ are exponentially-algebraically independent over $F$, and the remaining one (say $a_1$) satisfies an exponential polynomial equation in one variable over $F \cup \{a_2,\ldots,a_n\}$. Thus if $F$ has infinite exponential transcendence degree, then it satisfies the Schanuel nullstellensatz iff it satisfies the same statement just for exponential polynomials in one variable.

Now if $F \strong F'$ is an E-field extension given by adjoining a root $a$ of an exponential polynomial $f$ in one variable, then $F' = \gen{F,a}^E_{F'}$. That is, as an E-field extension it is generated by the single element $a$. 

Define a perfectly rotund variety $V$ to have \emph{depth 1} iff in an extension $F \strong F|V$ with generating tuple $\abar$, there is a single element $c$ such that $\abar$ is contained in $\gen{F,c}^E$. Equivalently, $F \strong \gen{F,c}^E \strong F|V$. Since $\gen{F,c}^E$ is an E-field but not an ELA-field, such an intermediate field is possible.

Let $\Cat_1^{<\aleph_0}$ be the smallest category of finitely generated strong ELA-extensions $F$ of $SK^{ELA}$ which is closed under simple extensions which are either exponentially transcendental or given by perfectly rotund varieties of depth 1. Let $\Cat_1$ be the closure of $\Cat_1^{<\aleph_0}$ under unions of $\omega$-chains. Then, as in the proof of Proposition~\ref{etd determines extension}, $\Cat_1$ is an amalgamation category and hence has a unique Fra\"iss\'e limit, say $\mathcal{U}$. Then $\mathcal{U}$ satisfies the Schanuel nullstellensatz. However, there are perfectly rotund varieties $V$ which do not have depth 1 such as the generic hypersurface in $G^3$ used in the proof of Proposition~\ref{non-model-complete}. By Theorem~\ref{Jordan-Holder}, for such $V$ there is no $(\abar,e^\abar)$ in $\mathcal{U}$ which is generic in $V$ over a field of definition of $V$, and hence $\mathcal{U}$ is not \seac. 
\end{proof}

\section{Transcendence problems}

Schanuel's conjecture has many consequences in transcendence theory. Ribenboim \cite[pp323--326]{Ribenboim00} gives a few examples of easy consequences, one being that the numbers $e$, $\pi$, $e^\pi$, $\log \pi$, $e^e$, $\pi^e$, $\pi^\pi$, $\log 2$, $2^\pi$, $2^e$, $2^i$, $e^i$, $\pi^i$, $\log 3$, $\log \log 2$, $(\log 2)^{\log 3}$, and $2^{\sqrt{2}}$ are all algebraically independent. 
 
When Lang first published Schanuel's conjecture \cite[p31]{Lang66}, he wrote: 
\begin{quote}
``From this statement, one would obtain most statements about algebraic independence of values of $e^t$ and $\log t$ which one feels to be true.''
\end{quote}
We strengthen this empirical observation, and make it precise. To make a precise statement we need a precise definition.
\begin{defn}
A \emph{particular transcendence problem} is a problem of the following form:\\
Given complex numbers $a_1,\ldots,a_n$ with some explicit construction, what is the transcendence degree of the subfield $\Q(a_1,\ldots,a_n)$ of $\C$?
\end{defn}
The example above of Ribenboim is clearly of this form. All the numbers there are explicitly constructed from the rationals $\Q$ by the operations of exponentiation, taking logarithms, taking roots of polynomials, and field operations.

Let $\C_0 = \ecl^{\Cexp}(\emptyset)$ be the field of exponentially-algebraic complex numbers. By Fact~\ref{etd fact}, for any $\abar = (a_1,\ldots,a_n) \in \C^n$ such that no $\Q$-linear combination of them lies in $\C_0$, we have $\td(\abar,e^\abar/\C_0) \ge n+1$. Thus Schanuel's conjecture (for \Cexp) is equivalent to its restriction to $\C_0$. Recall that SK embeds in $\C_0$, and so Schanuel's conjecture for $\C_0$ is equivalent to the assertion that $SK \strong \C_0$. By Proposition~\ref{etd determines extension}, if $SK \strong \C_0$ then $B_0 \iso \EAC{\C_0}$ (recall $B_0 = \EAC{SK}$). Thus Schanuel's conjecture is equivalent to the assertion that $\C_0$ embeds in $B_0$. If in addition $\C_0$ were \seac, that is, $\C_0 = \EAC{\C_0}$, then there would be an isomorphism $\C_0 \iso B_0$. Since the automorphism group of $B_0$ is very large, such an isomorphism would be very far from being unique.

\begin{theorem}\label{td problems}
Schanuel's conjecture decides all particular transcendence problems where the complex numbers $a_1,\ldots,a_n \in \C$ are given by an explicit construction from $\Q$ by the operations of exponentiation, taking logarithms, taking roots of polynomials, field operations, and taking implicit solutions of systems of exponential polynomial equations.
\end{theorem}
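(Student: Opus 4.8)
The plan is to turn a particular transcendence problem into a question about the isomorphism type of a finitely generated partial E-subfield of $\Cexp$, and then to show that Schanuel's conjecture pins that isomorphism type down. First I would unwind the construction: an explicit construction of $a_1,\dots,a_n$ from $\Q$ by the listed operations produces a finite tower $SK = M_0 \subs M_1 \subs \cdots \subs M_N = F_1$ of finitely generated partial E-subfields of $\Cexp$, where $SK$ (with $\tau = 2\pi i$) is placed at the bottom to fix the standard kernel, each step performs one operation (a field operation or an algebraic extension; exponentiating an element already present; taking a logarithm of an element already present; or adjoining a tuple satisfying a named system of exponential polynomial equations), and $a_1,\dots,a_n\in F_1$. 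The number sought is $\td(\Q(a_1,\dots,a_n)/\Q)$, and it is read off from the isomorphism type over $\Q$ of $F_1$ together with its distinguished elements. Since the construction is ``explicit'', the discrete data of each step is part of the input: which root of a named polynomial is taken, which coset of the kernel a logarithm lies in (forced up to the kernel once the field arithmetic of the defining relation is taken into account), and which solution of a system is chosen; replacing a chosen solution by a generic point of its locus, I may assume every implicit-solution step adjoins a generic point of a variety $V_i$ named by the construction.

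Next I would invoke the hypothesis. As explained just before the theorem, $\Cexp$ is a strong extension of $\C_0 = \ecl^{\Cexp}(\emptyset)$ by Theorem~1.2 of \cite{EAEF}, and Schanuel's conjecture is equivalent to $SK \strong \C_0$; hence Schanuel's conjecture gives $SK \strong \Cexp$, and therefore $SK \strong F_1$ because $D(F_1)\subs D(\Cexp)$. The core of the argument is then that $SK\strong F_1$ forces each step of the construction to be as free as the explicitly named relations allow, so that the isomorphism type of $(F_1;\abar)$ over $SK$ --- and with it $\td(\Q(\abar)/\Q)$ --- becomes a function of the construction data alone. An algebraic step produces an explicitly described finite algebraic extension; an exponentiation or logarithm step either is a no-op (the relevant element is already forced into the current domain by $\delta\ge 0$ together with the field arithmetic of the relations) or adjoins a genuinely transcendental new element, since otherwise the relevant relative predimension would equal $-1$, contradicting $SK\strong F_1$; and for an implicit-solution step, using $SK\strong \Cexp$ and Fact~\ref{etd fact} (so that $\etd = \min\delta$ over $D(SK)$) one sees that the chosen generic point carries exactly the generic value of transcendence and exponential-transcendence degree cut out by $V_i$. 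The coherent-systems-of-roots ambiguity at each exponentiation/logarithm step is removed exactly as in the proof of Theorem~\ref{ELA-well-defined}, after replacing the generators by $\abar/N$ and applying the Thumbtack Lemma; in particular $F_1^{ELA}$ is well-defined, is a finitely generated kernel-preserving strong ELA-extension of $SK^{ELA}$, hence by Theorem~\ref{aleph0-stability} is isomorphic to $SK^{ELA}|V$ for a rotund $V$ whose $\sim_{F_1}$-equivalence class (Theorem~\ref{Jordan-Holder}) is computed from the construction data, and $F_1$ embeds over $SK$, compatibly with $\abar$, into the countable pseudo-exponential field $B_{\aleph_0}$, which by Proposition~\ref{etd determines extension} is the universal domain of the category $\Cat(SK)$. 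Transcendence degree over $\Q$ is preserved by this embedding, and in $B_{\aleph_0}$ the quantity $\td(\Q(\abar)/\Q)$ is manifestly a combinatorial function of the construction; this shows Schanuel's conjecture decides the value.

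The step I expect to be the main obstacle is exactly ``the isomorphism type of $(F_1;\abar)$ over $SK$ is determined''. A direct induction using only $SK\strong F_1$ breaks down as soon as an implicit-solution step introduces an exponentially-transcendental element: the relative predimension of the enlarged domain over $D(SK)$ then carries positive slack, so $SK\strong F_1$ by itself no longer forces a later exponential or logarithm to be transcendental over the field built so far. Getting around this is precisely why one must pass to $F_1^{ELA}$, use the classification of strong ELA-extensions together with the amalgamation and saturation properties behind Theorem~\ref{aleph0-stability}, Theorem~\ref{Jordan-Holder} and Proposition~\ref{etd determines extension}, and compare with $B_{\aleph_0}$, so that genericity at each stage is enforced by the global structure of the Fra\"iss\'e limit rather than by a local predimension inequality. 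A secondary but routine obstacle is the bookkeeping needed to check that the combinatorial data of the construction really does determine the equivalence class of $V$ under $\sim_{F_1}$ and the position of $\abar$, for which \S5 supplies all the tools.
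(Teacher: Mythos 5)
Your reduction stands or falls on the step you yourself flag --- showing that the isomorphism type of $(F_1;\abar)$ over $SK$ is determined by the construction data --- and your proposed repair (pass to $F_1^{ELA}$, classify it via Theorems~\ref{aleph0-stability} and~\ref{Jordan-Holder}, embed into $B_{\aleph_0}$, and let ``the global structure of the Fra\"iss\'e limit'' enforce genericity) does not close the gap. The construction is carried out inside $\Cexp$, and Schanuel's conjecture gives you only $SK \strong \Cexp$; it does not make $\Cexp$ \seac, homogeneous, or in any way governed by $B_{\aleph_0}$, so nothing about $B_{\aleph_0}$ can force the specific complex solutions chosen at each step to be generic over what has been built. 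The embedding of $F_1^{ELA}$ into $B_{\aleph_0}$ certainly preserves $\td(\Q(\abar)/\Q)$, but it cannot determine it: to write the embedding down you already need the isomorphism type of $F_1$, that is, the locus $V$ of the generating tuple, and your assertion that $V$ and its $\sim$-class are ``computed from the construction data'' is exactly the statement to be proved (your earlier move of ``replacing a chosen solution by a generic point of its locus'' conflates the unknown locus of the actual complex point with a variety named by the problem). Worse, in the liberal reading you adopt, where an implicit-solution step may carry positive $\delta$-slack and introduce exponentially transcendental elements, the claim is simply false: for an underdetermined system the transcendence degree of the chosen complex solution is not decided by Schanuel's conjecture at all, and no comparison with $B_{\aleph_0}$ changes that.

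What is missing is the paper's opening observation: the listed operations are exactly the closure operations of exponential algebraicity, so every element of the construction lies in $\C_0 = \ecl^{\Cexp}(\emptyset)$ and no positive slack ever occurs. Schanuel's conjecture (equivalently $SK \strong \C_0$) then yields, via Proposition~\ref{etd determines extension}, a strong embedding $\C_0 \into B_0 = \EAC{SK}$; the finitely generated partial E-subfield $F$ of $B_0$ generated by the construction together with witnesses of exponential algebraicity of its elements is automatically strong in $B_0$, and the homogeneity of $B_0$ over finitely generated strong partial E-subfields (the back-and-forth behind Proposition~\ref{etd determines extension}) is used only to absorb the countably many choices of roots, logarithms and implicit solutions, not to manufacture genericity. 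With $F \strong B_0$ the local predimension calculus pins each step down, so the isomorphism type of $F$, and hence $\td(\Q(a_1,\ldots,a_n))$, is determined. In short, the correct fix for the obstacle you identify is not to enlarge the ambient model to $B_{\aleph_0}$ but to notice that, under the actual hypotheses of the theorem, the obstacle never arises because the whole construction stays inside the exponentially algebraic numbers.
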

\begin{proof}
The conditions on the $a_i$ are equivalent to them all lying in $\C_0$, that is, being exponentially-algebraic complex numbers. Assuming Schanuel's conjecture, $\C_0$ embeds in $B_0$. Any explicit description of the $a_i$ defines a finitely generated partial E-subfield $F$ of $B_0$, the smallest one containing all the coefficients of the exponential polynomial equations used in the given descriptions of the $a_i$. $F$ is necessarily strong in $B_0$, since it contains witnesses of all of its elements being exponentially algebraic. When taking logarithms or, more generally, taking implicit solutions of systems of equations, there are countably many solutions in $B_0$, but the homogeneity of $B_0$ for strong partial E-subfields (which follows from the Fra\"iss\'e theorem used in the proof of Proposition~\ref{etd determines extension}) shows that these choices do not affect the isomorphism type of $F$. Thus Schanuel's conjecture determines the isomorphism type of $F$ as a partial E-field, and hence it determines the transcendence degree of its subfield $\Q(a_1,\ldots,a_n)$.
\end{proof}

Note that if we do not allow taking implicit solutions of systems of exponential polynomial equations then the construction stays inside the field $SK^{ELA}$, and the proof depends only on section~2 of this paper. In particular this covers the field $SK^{EL}$ which, under Schanuel's conjecture, is the field of all of what Chow \cite{Chow99} calls \emph{EL-numbers}, that is, those complex numbers which have a closed-form representation using $0$, $+$, $\cdot$, $-$, $\div$, $\exp$ and the principal branch of the logarithm.

The construction will produce a generating set $\bbar$ for $D(F)$, and polynomial equations with rational coefficients determining the locus $V$ of $(\bbar,\exp(\bbar))$. If we had an algorithm to determine the $\Q$-linear relations holding on $\bbar$ and the multiplicative relations holding on $\exp(\bbar)$, that would give an algorithm for answering particular transcendence problems of this form.

There are other transcendence problems which are more general in nature, for example the four exponentials conjecture which states that if $x_1, x_2, y_1, y_2 \in \C$ and $\ldim_\Q(x_1,x_2) = \ldim_\Q(y_1,y_2) = 2$, then 
\[\td(e^{x_1y_1}, e^{x_1y_2},e^{x_2y_1},e^{x_2y_2}) \ge 1.\]
 The four exponentials conjecture is not a particular transcendence problem as defined above, but nonetheless it can easily be seen to follow from Schanuel's conjecture. So the statement of Theorem~\ref{td problems} is not a complete answer to formalising Lang's observation. Nonetheless, the method of proof above does apply. The four exponentials conjecture can be viewed as the conjunction of a set of particular transcendence problems, namely every specific instance of the problem. More generally, suppose $\mathcal{P}$ is a transcendence problem, such as the four exponentials conjecture, which asserts that some transcendence degree is large given suitable conditions (about exponentials, logarithms, and algebraic equations). Then either (every instance of) $\mathcal{P}$ is true in $B_0$ so it follows from Schanuel's conjecture that it is true in $\C_0$, or $\mathcal{P}$ is false in $B_0$, in which case, since $B_0$ is constructed in as free a way as possible, $\mathcal{P}$ cannot be true in any exponential field $F$ (unless it is true trivially because the hypotheses are not satisfied by any numbers in $F$).

\subsection*{Connection with conjectures on periods}

The two main conjectures about $\Cexp$ are:
\begin{enumerate}
\item Schanuel's conjecture, equivalently $\Cexp$ embeds in $\B$, equivalently $\C_0$ embeds in $B_0$;
\item $\Cexp$ is \seac, equivalently that $\Cexp = \EAC{\Cexp}$
\end{enumerate}
Together, they form Zilber's conjecture that $\Cexp \iso \B$. As shown in \cite{EAEF}, using work of Ax \cite{Ax71}, Schanuel's conjecture is equivalent to its restriction to $\C_0$. In the light of Lemma~\ref{hull lemma} and Proposition~\ref{almost strong}, the restriction of the conjecture to $\C_0$ is equivalent to the assertion that if $a$ is an exponentially algebraic complex number then there is a unique reason for that, meaning a unique smallest finite-dimensional $\Q$-vector subspace $\hull{a}$ of $\C$ containing $a$ such that $\delta(\hull{a}) = 0$.

This formulation of Schanuel's conjecture makes a visible connection with the conjecture of Kontsevich and Zagier on periods \cite[\S1.2]{KZ_Periods}. They conjecture that if a complex number is a period then there is a unique reason for that, up to three rules for for manipulating integrals: additivity, change of variables, and Stokes' formula. Kontsevich and Zagier give an alternative formulation of their conjecture in \cite[\S4.1]{KZ_Periods}. There is a canonical surjective homomorphism from a formal object, the vector space of effective periods, to the space of complex periods. The periods conjecture is equivalent to this homomorphism being an isomorphism. In the exponential case, the existence of automorphisms of $B_0$ means there can be no canonical isomorphism from the formal object $B_0$ to $\C_0$. Furthermore, since the objects in question are fields rather than vector spaces, there cannot be a non-injective map between them so if the conjecture is false then there is no map at all from $B_0$ to $\C_0$, although one could repair this by taking suitable subrings of $B_0$ instead. Finally, the open question of strong exponential-algebraic closedness of $\C_0$ means that any map should go from $\C_0$ to $B_0$ rather than the other way round, or that the subrings of $\B$ chosen should be restricted in some way. The power of the predimension method, as used in this paper, is that such considerations are not necessary.

The Kontsevich-Zagier conjecture does not imply Schanuel's conjecture, because for example $e$ is (conjecturally) not a period. Even the expanded conjecture on exponential periods \cite[\S4.3]{KZ_Periods} does not say much about Schanuel's conjecture, because (again conjecturally) $e^e$ is not an exponential period. Furthermore Schanuel's conjecture does not just refer to $\C_0$ but to all of $\C$ whereas periods form a countable subset of $\C$. Andr\'e has observed \cite[\S4.4]{Andre09} that the Kontsevich-Zagier conjecture is equivalent to Grothendieck's conjecture on periods, and Andr\'e himself proposed a conjecture which encompasses both Grothendieck's periods conjecture and Schanuel's conjecture \cite[\S5.8.1]{Andre09}, and applies to all of $\C$.

\section{Open Problems}

We end with some open problems. Schanuel's conjecture is known to be very difficult, and the conjecture that $\Cexp$ is \seac\ is also widely open (even assuming Schanuel's conjecture). We suggest some questions about complex exponentiation which may be easier.

\begin{enumerate}
 \item[(1)] Define an ELA-field $F$ to be \emph{locally finitely presented} iff
  every finitely generated ELA-subfield of $F$ is finitely presented. Is $\Cexp$ locally finitely presented?
 \item[(2)] Is there \emph{any} finitely presented exponential subfield of $\Cexp$?
 \item[(3)] Is there an exponential subfield $C$ of $\C$, and a finitely presented proper extension of $C$ realised inside $\C_0$, the subfield of exponentially algebraic numbers in $\C$? Since $\C_0 \strong \Cexp$, the question is resolved outside $\C_0$.
 \item[(4)] Let $V \subs G^n(\C)$ be perfectly rotund. The theorem of Henson and Rubel \cite[Theorem~5.4]{HR84} implies that if $n=1$ then there is $(a,e^a) \in V$ in \Cexp. How about $n=2$, or $n=3$? Indeed, for which $V$ can one show there are any solutions in $\Cexp$? 
 \item[(5)] Is there any perfectly rotund $V$ which is not of depth 1 with $(a,e^a) \in V$ in \Cexp?
\end{enumerate}

An apparently difficult problem is to construct an ordered analogue of pseudo-exponentiation which should be conjecturally elementarily equivalent to the real exponential field $\Rexp$. Since the real exponential function is determined just by its being a homomorphism which is order-preserving, continuous, and by the cut in the reals of $e$, one would have to assume Schanuel's conjecture for $\Rexp$ to construct an Archimedean model. The following problem is of the same nature, but may perhaps be more straightforward.
\begin{enumerate}
  \item[(6)] Can the automorphism $\sigma_0$ on $SK$ be extended to an automorphism of order 2 on a subfield of $B_{\aleph_0}$ larger than $SK^E$, such as $SK^{EA}$,  $SK^{ELA}$, $B_0$, or even $B_{\aleph_0}$ itself, in such a way that the exponential map is order-preserving on the fixed field (which will necessarily be real-closed, and hence ordered)?
\end{enumerate}

Finally, the predimension method used in this paper is very powerful, and can be extended beyond the exponential setting, for example to the exponential maps of semi-abelian varieties \cite{TEDESV} and to sufficiently generic holomorphic functions known as Liouville functions \cite{Zilber02tgfd}, \cite{Wilkie05}, and \cite{Koiran03}. The periods conjecture of Andr\'e encompasses the first of these settings and also the Grothendieck-Kontsevich-Zagier periods conjecture.

\begin{enumerate}
  \item[(7)] Is there a way to formulate Andr\'e's conjecture as the non-negativity of some predimension function, satisfying the essential properties such as the addition formula and submodularity?
\end{enumerate}

\end{document}